\newcommand{\comment}[1]{}
    \newcommand{\set}[1]{{\left\{#1\right\}}}
\newcommand{\pa}[1]{{\left(#1\right)}}
\newcommand{\sq}[1]{{\left[#1\right]}}
\newcommand{\abs}[1]{{\left|#1\right|}}
\newcommand{\norm}[1]{{\left |#1\right |}}
\newcommand{\T}{\mathbb{T}}
\newcommand{\Z}{\mathbb{Z}}
\newcommand{\R}{\mathbb{R}}
\newcommand{\C}{\mathbb{C}}
\newcommand{\teta}{\theta}
\newcommand{\eps}{\varepsilon}
\newcommand{\na}{\widehat{n}}
\newcommand{\base}[1]{{\frac{\partial}{\partial{#1}}}} 
\newcommand{\gr}[1]{\textbf{#1}}
\newcommand{\id}{\operatorname{Id}}
\newcommand{\ad}{\operatorname{ad}}
\newcommand{\ri}{r}
    \newtheorem*{thm*}{Theorem}
    \newtheorem*{cor*}{Corollary}
    \newtheorem*{teo normal}{"Twisted Conjugacy" Theorem}
    \newtheorem{ass}{Assumption}
\newtheorem{Thm}{Theorem}
\newtheorem{prop}{Proposition}[section]
\newtheorem{cor}[prop]{Corollary}
\newtheorem{lemma}[prop]{Lemma}
\newtheorem{remark}[prop]{Remark}
\newtheorem{defn}[prop]{Definition}
\newtheorem{exam}{Example}
\numberwithin{equation}{section}
\newcommand{\g}{\gamma}
\newcommand{\s}{{\sigma}}
\newcommand{\rf}{{r'}}
\newcommand{\papa}[1]{\frac{\partial}{\partial x_{#1}}}
\newcommand{\N}{{\mathbb N}}
\newcommand{\Q}{{\mathbb Q}}
\newcommand{\cC}{{\mathcal C}}
\newcommand{\cG}{{\mathcal G}}
\newcommand{\cH}{{\mathcal H}}
\newcommand{\cI}{{\mathcal I}}
\newcommand{\cJ}{{\mathcal J}}
\newcommand{\cK}{{\mathcal K}}
\newcommand{\cM}{{\mathcal M}}
\newcommand{\cN}{{\mathcal N}}
\newcommand{\cR}{{\mathcal R}}
\newcommand{\cT}{{\mathcal T}}
\newcommand{\cV}{{\mathcal V}}
\newcommand{\cY}{{\mathcal Y}}
\newcommand{\fm}{{\mathfrak{m}}}
\newcommand{\tc}{{\mathtt{c}}}
\newcommand{\td}{{\mathtt{d}}}
\newcommand{\tf}{{\mathtt{f}}}
\newcommand{\tg}{{\mathtt{g}}}
\newcommand{\tm}{{\mathtt{m}}}
\newcommand{\tC}{{\mathtt{C}}}
\newcommand{\tD}{{\mathtt{D}}}
\newcommand{\tK}{{\mathtt{K}}}
\newcommand{\tM}{{\mathtt{M}}}
\newcommand{\be}{{\bf e}}
\newcommand{\al}{{\alpha}}
\newcommand\norma[1]{\left\lVert#1\right\rVert}
\newcommand{\im}{{\rm i}}
\newcommand{\jap}[1]{\langle #1 \rangle}
\newcommand{\und}[1]{\underline{#1}}
\newcommand{\e}{{\varepsilon}}
\newcommand{\diag}{{\rm diag}}
\newcommand{\fin}{{\rm fin}}
\definecolor{aquamarine}{rgb}{0,0.5,0.5}
\newcommand{\bnorm}[1]{{|\mkern-6mu |\mkern-6mu | \,  #1 \,  |\mkern-6mu |\mkern-6mu |}  }
\newcommand{\nnorm}[1]{{\left\vert\kern-0.25ex\left\vert\kern-0.25ex\left\vert #1 
    \right\vert\kern-0.25ex\right\vert\kern-0.25ex\right\vert}}
\newcommand{\es}{e^{\sq{S,\cdot}}}
\newcommand{\crac}{{\mathtt C'}}
\newcommand{\iacopo}{\cI^{(1)}}
\newcommand{\ozio}{\cI^{(0)}}
\newcommand{\dom}{\cI^{(2)}}
\newcommand{\jacopo}{\cJ^{(1)}}
\newcommand{\jdom}{\cJ^{(2)}}
\begin{document}

\author{Jessica Elisa Massetti}
\address{Università degli Studi Roma ``Tor Vergata"}
\email{massetti@mat.uniroma2.it}

\author{Michela Procesi}
\address{Università degli Studi Roma Tre}
\email{procesi@mat.uniroma3.it}

\author{Laurent Stolovitch}
\address{Laboratoire J.A. Dieudonné, Université Côte d'Azur}
\email{Laurent.STOLOVITCH@univ-cotedazur.fr}
 
 \title[Resonant normal form for infinite dimensional vector fields]{{Invariant sets through} resonant normal form for infinite dimensional holomorphic vector fields}
\begin{abstract}
{In this paper, we study infinite dimensional holomorphic vector fields on sequence spaces, having a fixed point at $0$. Under suitable hypotheses we prove the existence of analytic invariant submanifolds passing through the fixed point. The restricted dynamics is analytically conjugate to the linear one under some Diophantine-like condition.}
\end{abstract} 
\maketitle
\setcounter{tocdepth}{2} 
\tableofcontents
\section{Introduction} 


In this paper we shall {prove the existence of analytic invariant submanifolds passing through a fixed point of analytic vector fields in infinite dimension. The restricted dynamics is analytically conjugate to the linear one. These invariant sets are obtained by extending to the infinite dimensional setting the notion of normal forms } of holomophic vector fields first introduced by Poincar\'e and Dulac in the $19$th century. To this purpose, let us consider a sequence space indexed by some countable index set $I$, 
with variables $x=\pa{x_k}_{k\in I}$. As it is habit, we introduce formal power series and formal vector fields, i.e.
\begin{equation}
\label{notation0}
f(x)=\sum_{q\in \N^I_{\fin}} f_q x^q\,, \quad \quad  V(x) = \sum_{k\in I, q\in \N^I_{\fin}} V_q^{(k)} x^q \frac{\partial}{\partial x_k}
\end{equation}
where  

$$\N^I_{\fin} := \set{q\in\N^I\, : \, \|q\|_{\ell^1}:=\sum_{i\in I}q_i < \infty },$$
is the set of elements in $\N^I$ with finite support.

If the set $I$ is not finite, in general we cannot expect that the objects above behave well under products or commutators for instance, so that even at a formal level, some hypothesis are needed in order to perform normal form techniques.

In order to minimize technical questions let us start by considering the finite dimensional case, that is $|I| < \infty$. In fact, the main ideas and strategy will be applied in the infinite dimensional setting, provided we define an appropriate functional framework. \\

{\textit{The case $|I|<\infty$.}} Let us consider a holomorphic vector field in the neighborhood of the origin in $\C^I$ of the form 
\begin{equation}
\label{finito vf}
X = \tD(\lambda) + P
\end{equation}
where $\tD(\lambda) = \sum_{k\in I}\lambda_k\base{x_k},\,\lambda_k\in\C$ and $P$ is a holomorphic vector field with a zero of order at least two at the origin. 

A very classical question is whether it is possible to conjugate $X$ to its linear part $\tD(\lambda)$. As it is well known this is in general not possible even at a formal level because of the presence of \emph{resonances}. In fact, the Poincar\'e-Dulac normal form procedure shows that \eqref{finito vf} can be formally conjugated at best to a normal form 
$$
Y = \tD(\lambda) + Z, \quad \quad [\tD(\lambda),Z] = 0\,.
$$
The \emph{resonant} term $Z$ is a formal power series of the form 

\begin{equation}
\label{risonante finito}
Z(x) = \sum_{k\in I}\sum_{q\in\N^I} Z_q^{(k)} x^q \frac{\partial}{\partial x_k},\quad\quad  (q\cdot\lambda - \lambda_k) Z^{(k)}_q  = 0\, \quad \forall q, k.
\end{equation}

It is usually not possible to conjugate it to a normal form through an analytic transformation\cite{arnold-nf,Bruno}. One might then wonder whether it is possible to conjugate $X$ to another model, which coincides with $\tD(\lambda)$ only if restricted to some appropriate manifold that is invariant under the linear flow. A natural choice is represented by the zero set of the constants of motion, that is those functions (either holomorphic or formal) which are invariant under  $\tD(\lambda)$. Similarly to \eqref{risonante finito}, such functions are of the form 

\begin{equation}
\label{costante finito}
C(x) = \sum_{q\in\N^I} C_q x^q,\quad \quad (q\cdot\lambda) C_q = 0\,\quad \forall q.
\end{equation}
Consider now the sub-lattice 
$$
{\cM}_\lambda:= \{ Q\in \N^I:   Q\cdot \lambda =0 \}
$$
and let $Q_1,\ldots, Q_n$ be its generators. Then the ring of constants of motions is generated by the elementary monomials $h_i (x):= x^{Q_i}$. In this line of thoughts it is natural to take into consideration the manifold 
$$
\Sigma := \set{x\in\C^I,\, :\,  h_i(x) = 0\, \forall i}\,.
$$
Note that any vector field of the form 
\begin{equation}\label{gigino}
\sum_{k\in I} C^{(k)}(x)x_k \frac{\partial}{\partial x_k},\,
\end{equation}
where $C^{(k)}(x)$ is a constant of motion, is resonant and vanishes on $\Sigma$. On the other hand resonant vector fields might not all be of this form. Indeed, a resonant monomial $x^q\base{x_k}$ might have $q_k = 0$, so that one cannot factorize  $x_k \base{x_k}$ out of it. We shall refer to such   vector fields as \textit{resonant non diagonal}, and denote the monomial vector fields generating them as 
\begin{equation}\label{resonant mono}
 x^{p+e_k}\base{x_k}\,,\quad p\in \Z^I\setminus\N^I \quad \mbox{such that}\;
p+e_k\in \N^I\,,\quad p\cdot\lambda=0.
\end{equation}
By contrast, if $p\in\N^I$, then it is in $\cM_\lambda$ and the vector field is of type \eqref{gigino} which we refer to as \textit{resonant diagonal} vector fields. 
 Of course, any resonant vector field multiplied by a constant of motions is still resonant and vanishes on $\Sigma$. However, if the set of non diagonal resonant vector fields is non empty, then some of them necessarily do not vanish on $\Sigma$ nor are tangent to it. More precisely, there exist a finite list of generators $P_1,\ldots,P_m\in\Z^I\setminus\N^I$ such that any $p$ as in formula \eqref{resonant mono} can be uniquely written as 
\begin{equation}\label{formalina}
p = P_i + Q\,,\quad Q\in\cM_\lambda\,,
\end{equation}
ad of course if $Q = 0$ then the corresponding monomial cannot vanish on $\Sigma$.
 \\ 
We shall denote by $\Delta_\lambda$ the set of those $p\in\Z^I$ such that either $p\in\cM_\lambda$ or $p$ has the form \eqref{formalina}, so that the resonant vector fields are generated by  $x^{p + e_k}\base{x_k}$, with $p\in\Delta_\lambda$.\\

By construction, there exists $\tM^\ast \in\N$ such that all monomial resonant vector fields which have a zero of order $\ge \tM^\ast+1$ are of the form $$ x^{Q_i}x^{Q_j}x^{p + e_k}\base{x_k}$$ for some $i,j$ and $p\in\Delta_\lambda$.
{Let us illustrate our definitions: let us consider a nonlinear perturbation of the vector field $\tD(\lambda):=2x_1\partial_{x_1}+x_2\partial_{x_2}+\zeta\left(x_3\partial_{x_3}-x_4\partial_{x_4}\right)$ for some positive irrational number $\zeta$. So $\mathcal M_\lambda$ is generated by $x_3x_4$, $\Sigma=\{x_3x_4=0\}$, the only non diagonal resonant terms are  generated by $x_2^2\partial_{x_1}$ and $\tM^*=5$.
	
	A formal normal form is of the form $$\tD(\lambda)+cx_2^2\partial_{x_1}+ f_2(x_3x_4)x_2^2\partial_{x_1}+f_3(x_3x_4)x_3\partial_{x_3}+ f_4(x_3x_4)x_4\partial_{x_4},$$ where $c$ is a constant and the $f_i$'s are  formal power series of a single variable, vanishing at the origin. 
	 If $c=0$, then the set $\Sigma$ is invariant by the formal normal form, which reduces to the linear vector field $\tD(\lambda)$ on $\Sigma$. Of course there is no reason why both the transformation or the normal form should be analytic. In finite dimension, it is known that such a "linearization on analytic sets" result holds in the analytic setting if the linear part $\tD({\lambda})$ satisfies a Diophantine-like condition even if there is no convergent transformation to a normal form. This was proved by one of the authors in \cite{Stolo-dulac} by a majorant method. In the non-resonant volume preserving case, $\mathcal M$ is generated by the sole monomial $x_1\cdots x_n$ and the result was obtain by B. Vallet \cite{vallet}.  \\ 
	The aim of this article is to prove the analytic linearization result with a proof based on a Newton's method which is uniform in the dimension and thus well suited for the infinite dimensional case. As a byproduct, in finite dimension this gives a completely new proof of \cite{Stolo-dulac}, under slightly different hypotheses, namely, in \cite{Stolo-dulac}, the restriction of linear part $\tD({\lambda})$ to $\Sigma$ is assumed to satisfy Bruno's condition. This is known to be weaker than the Diophantine condition. On the other hand, in the aforementioned article, the resonances are all assumed to be of diagonal type and this is not assumed in the present article.
	To be completely explicit, our main result, {Theorem \ref{main-thm},} is an infinite dimensional version of the following:}
	
	\medskip
\noindent
\textit{Consider an analytic vector field of the form 
\begin{equation}
\label{campo}
X = \tD(\lambda) + Z + P,
\end{equation}
where $Z$ is a diagonal resonant vector field with a zero of order at least two at the origin while $P$ has a zero of order at least $\tM^\ast + 1$.
Assume moreover that $\lambda$ is Diophantine modulo $\Delta_\lambda$ (see Definition \ref{def-dioph}). There exists a sufficiently small radius $\rho>0$ and a diffeomorphism $\psi$ tangent to the identity holomorphic on the ball $B_\rho(0)$ such that
\begin{equation}\label{normale}
	\psi_* X = \tD(\lambda) + Z + R,
\end{equation}
where $R $ vanishes on $\Sigma$ and  is generated by monomials of the form $x^{Q_i}x^{Q_j}x^{q}\base{x_k}$. 
}
\medskip

\textit{{The case $|I| = \infty$.}} In infinite dimension, as we mentioned before, the problem of normal forms might not even make sense at a formal level.
In order to keep things simple we shall focus on the example where
$I= \Z \times \{+1,-1\}$, which in our opinion contains all the main difficulties without excessively cumbersome notations. 
  In the same spirit
we restrict to vector fields that satisfy some symmetry (e.g. translation invariance in models coming from PDEs) 
  as shown in \cite{stolo-procesi}. This allows to define the notion of formal normal form of vector field and it would allow us to develop a theory of formal Lie algebras of infinite dimensional vector fields, by a straightforward generalization of \cite{stolo-procesi}. However we are more interested in the analytic category, hence we fix from the beginning an appropriate functional setting. More precisely we proceed as follows.
  \begin{itemize}[$\bullet$,leftmargin=*]
  	\item  We choose as functional space
  \[
  \tg_s= \tg_s(I,\C):= \{ \pa{x_k}_{k\in I} \in \ell_2(I,\C):  \quad |x|_s:=\sum_{k\in I} \jap{k}^2 e^{2s\sqrt{\jap{k}}}|x_{k}|^2  <\infty \}\,,\quad \jap{(j,\s)} :=\max(|j|,1)\,
  \]
  and consider analytic vector fields and holomorphic functions of variables belonging to $\tg_s$. As it is habit, we shall introduce a quite natural norm on these sets, which endows them with a structure of filtred Lie-Poisson algebra, see section \ref{funzionale} for details. 
 \item We fix $\lambda\in\C^I$, with $\lambda_k\neq 0,\,\forall k\in I$, satisfying an appropriate arithmetic-Diophantine condition  together with an assumption on their asymptotic behavior, see Assumption \ref{assumption2}. This allows us to properly define the diagonal vector fields $\tD(\lambda)$ mentioned before. 
\item We consider the sets $\cM_\lambda, \Delta_\lambda$, in line with the finite dimensional case, we denote by $Q_i,P_i$'s the (countable) generators and assume that they have uniformly bounded size. This is again an hypothesis on $\lambda$, that allows us to define the manifold $\Sigma$, the non diagonal resonant vector fields and $\tM^*<\infty$.
  \end{itemize}
Now we consider a vector field as in \eqref{campo}, under the further assumption that $Z,P$ are analytic on $\tg_s$. Then, in Theorem \ref{main-thm}, we prove a normal form result as \eqref{normale} where $\phi$ is a holomorphic diffeomorphism on a ball in $\tg_s$. Note that under such weak hypotheses one cannot even guarantee local well posedness of the flow of $X$. See Remark \ref{rem-wp} for a more detailed discussion on this issue.
 
\section{Analytic vector fields, constants of motions and resonances }\label{funzionale}

%
%
 
\subsection{Functional setting}

 Let $B_r(\tg_s)$ be the open ball of radius $r$ in $\tg_s$, that is
$$
B_r(\tg_s):=\{x\in\tg_s\,|\,|x|_s<r \}\,.
$$

\noindent 
We now introduce analytic functions and analytic vector fields that ``preserve momentum", namely those that are invariant w.r.t. the following action
\begin{equation}
\label{azione invariante}
T_\fm:\; x_k\mapsto e^{\im \fm_k}x_k\,,\quad \quad \fm_{(j,\s)}:= \s j\,.
\end{equation}

\begin{defn}[Holomorphic functions]
\label{hol}
Given $r,s>0$, we  let { $\cH_{s, r}$} be the set  of normally analytic functions $f: B_r(\tg_s)\to \C$ defined as absolutely convergent power series with bounded majorant norm
$$
f(x)=\sum_{q\in \N^I_{\fin}} f_q x^q \quad \text{with } \quad |f|_{s,r} := \sup_{|x|_s < r} \sum_{q\in \N^I_{\fin}} |f_q| x^q < \infty,
 $$
that are invariant w.r.t \eqref{azione invariante} namely
\[
f_q = 0  \quad  \mbox{if}\quad \sum_{ h\in I} \fm_h q_{h}  =  \fm \cdot q \ne 0\,.
\]
\end{defn}
In line with analytic functions, in order to define analytic vector fields we need to introduce \textbf{monomial vector fields}, that is $$ x^q  \frac{\partial}{\partial x_k}, \quad k\in I,\quad  q \in \N^I_{\fin}\,.$$

\begin{defn}[Admissible vector fields]\label{cv} Given $r,s >0$ let $\cV_{r,s}:= \cV(\tg_s,\norma{\cdot}_{r,s})$ be the space of analytic vector fields on $\tg_s$ defined as absolutely convergent power series of the form
\[
V(x)= \sum_{k \in I}V^{(k)}(x)\base{x_k}:=\sum_{k\in I, q\in \N^I_{\fin}} V_q^{(k)} x^q \frac{\partial}{\partial x_k}
\]

such that 

\begin{enumerate}
\item $V(0) = 0$
	\item   $V$ is invariant w.r.t \eqref{azione invariante}, namely
\[
V_q^{(k)} = 0  \quad  \mbox{if}\quad \sum_{ h\in I} \fm_h q_{h}  - \fm_k =  \fm \cdot (q-\be_k) \ne 0.
\]
{We shall say that $V$ is {\it momentum preserving}.}
\item The \textbf{majorant norm} of ${V}$ is bounded
\[
\|V\|_{r,s}:= \frac{1}{r }\sup_{x\in B_r(\tg_s)} |\und{V}(x)|_s< \infty\,,\quad \quad \und{V}(x)= \sum_{k\in I, q\in \N^I_{\fin}} |V_q^{(k)}| x^q \frac{\partial}{\partial x_k}\,. \]
\end{enumerate}
\end{defn}

\medskip

The majorant norm endows both $\cH_{s, r}$ and $\cV_{s, r}$ with a Banach space structure. 
We define the {\it homogeneous degree} at zero of  functions by setting $$\mbox{deg}( x^q):= \|q\|_{\ell_1}=\sum_{k \in I} q_k$$ and of vector fields as
\[
\mbox{deg}( x^q \frac{\partial}{\partial x_k}):=\|q\|_{\ell_1}-1= \sum_{k \in I} q_k  -1,
\] 
we denote $\cH_{r,s}^\td$, resp. $\cV_{r,s}^\td$, the space of homogeneous functions, resp.  vector fields, of degree $\td$. Considering a vector field $V=V_1+V_2+\cdots$ with $V_i\in \cV_{r,s}^i$, we shall say that $V$ is of {\it order} $\td$ if $\td $ is the smallest integer such that $V_{\td}\neq 0$ and $V_i=0$, $i<\td$.  Similarly, we denote $\cH_{r,s}^{\ge \td}$ (resp. $\cH_{r,s}^{> \td}$) the space of vector fields of order $\ge \td$ (resp. $> \td$).\\
Note that $(1)$ ensures that the degree $\td\ge 0$ for vector fields in $\cV_{s,r}$.
\begin{lemma}[Inclusion of spaces]\label{monotone}
 $\cH_{s,r}$ and $\cV_{s,r}$ are scales of Banach spaces w.r.t. $s$, namely
 \[
 \cH_{s,r} \subseteq \cH_{s',r} \,,\quad  \cV_{s,r} \subseteq \cV_{s',r} \qquad s\ge s'\,,
 \]
 more precisely
 \[
 |\cdot|_{s,r} \le  |\cdot|_{s',r'}\,,\quad \quad \|\cdot\|_{s,r} \le  \|\cdot\|_{s',r'} \; \quad \forall  s\ge s'\,,r\le r'\,,
 \]
 The norms are also compatible with the degree namely  for all $f\in \cH_{s,r}^{\td}$, resp  $X\in  \cV_{s,r}^{\td}$
 \[
  |f|_{s,r} =  \pa{\frac{r}{r'}}^\td |f|_{s,r'}\,,\quad  \|X\|_{s,r} =  \pa{\frac{r}{r'}}^\td \|X\|_{s,r'}
 \]
\end{lemma}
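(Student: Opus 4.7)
My plan is to break the statement into three independent pieces: the exact scaling identities for homogeneous elements, monotonicity of the norms in $r$, and monotonicity in $s$, treating functions and vector fields in parallel but flagging where the arguments diverge.

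I would begin with the scaling identity, as it is cleanest and underlies the other arguments. For $f \in \cH_{s,r'}^{\td}$, every monomial in the majorant $\und{f}$ has $\|q\|_{\ell_1} = \td$, so $\und{f}(\theta x) = \theta^{\td}\und{f}(x)$ for $\theta > 0$; setting $\theta = r/r'$, the substitution $x = \theta y$ carries $\{|y|_{s} < r'\}$ bijectively onto $\{|x|_{s} < r\}$ and yields $|f|_{s,r} = \theta^{\td}|f|_{s,r'}$. For homogeneous $X \in \cV_{s,r'}^{\td}$ the same substitution gives $\und{X}(\theta y) = \theta^{\td+1}\und{X}(y)$ componentwise, so $|\und{X}(\theta y)|_{s} = \theta^{\td+1}|\und{X}(y)|_{s}$; the extra factor $\theta$ is absorbed by the prefactor $1/r = \theta^{-1}/r'$, producing $\|X\|_{s,r} = \theta^{\td}\|X\|_{s,r'}$.

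Monotonicity in $r$ is then immediate for functions since $r \le r'$ gives $B_r(\tg_s) \subseteq B_{r'}(\tg_s)$ and $\und{f}\ge 0$. For vector fields the $1/r$ prefactor demands more care, but this is exactly where the defining hypothesis $X(0)=0$ enters: every monomial of $X$ has $\|q\|_{\ell_1}\ge 1$, so for $\theta = r/r' \le 1$ one has $\theta^{\|q\|_{\ell_1}}\le\theta$ and hence $\und{X}(\theta y)_k \le \theta\,\und{X}(y)_k$ pointwise. Since $|\cdot|_{s}$ is monotone on sequences with nonnegative entries, this gives $|\und{X}(\theta y)|_{s}\le\theta|\und{X}(y)|_{s}$; taking the supremum over $|y|_{s} < r'$ and dividing by $r$ converts $\theta/r$ into $1/r'$, so $\|X\|_{s,r} \le \|X\|_{s,r'}$.

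Monotonicity in $s$ is again trivial for functions: since $e^{2s\sqrt{\jap{k}}}$ is increasing in $s$, $|x|_{s}\ge|x|_{s'}$ and hence $B_r(\tg_s)\subseteq B_r(\tg_{s'})$ for $s\ge s'$, so the supremum of the nonnegative $\und{f}$ over the smaller ball is smaller. The vector-field case is the main obstacle, since $s$ enters both through the domain and through the output norm. I would resolve this by the linear change of variables $y_h = e^{s\sqrt{\jap{h}}}x_h$, which is an isometry $|\cdot|_{s}\to|\cdot|_{0}$ and rewrites the majorant as $|\und X(x)|_s = |W(y)|_0$ with $W^{(k)}(y) = \sum_q |V_q^{(k)}|\,e^{s(\sqrt{\jap{k}} - \sum_h q_h\sqrt{\jap{h}})} y^q$. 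The crucial step is the inequality $\sqrt{\jap{k}}\le\sum_h q_h\sqrt{\jap{h}}$ for every momentum-preserving monomial $x^q\base{x_k}$ in $X$, which forces each effective coefficient, and hence $\|X\|_{s,r}$, to be nonincreasing in $s$. I expect this inequality to be the main point of the argument; I would derive it from momentum conservation $|k_1|=|\fm_k|\le\sum_h q_h|\fm_h|=\sum_h q_h|h_1|$ together with the elementary bound $(\sum_h q_h\sqrt{|h_1|})^2\ge\sum_h q_h^2|h_1|\ge\sum_h q_h|h_1|$ (using $q_h^2\ge q_h$ for $q_h\in\N$), treating the edge case $|k_1|=0$ via $\sqrt{\jap{h}}\ge 1$ and $\|q\|_{\ell_1}\ge 1$ coming from $X(0)=0$. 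Combining the two monotonicities then yields the full two-parameter inequalities $|f|_{s,r}\le|f|_{s',r'}$ and $\|X\|_{s,r}\le\|X\|_{s',r'}$ for $s\ge s'$, $r\le r'$, from which the claimed inclusions of the scale follow at once.
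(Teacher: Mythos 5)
Your proof is correct, and its skeleton coincides with the paper's: both arguments reduce the lemma to a monomial-by-monomial comparison of weights, with the scaling identity and the $r$-monotonicity coming from $\|q\|_{\ell_1}\ge 1$ (i.e. $V(0)=0$), and the $s$-monotonicity from the inequality $\sum_h q_h\jap{h}^{1/2}\ge\jap{k}^{1/2}$ for momentum-preserving monomials $x^q\base{x_k}$; your substitution $y_h=e^{s\sqrt{\jap{h}}}x_h$ plays exactly the role of the paper's auxiliary majorant map with coefficients $c^{(k)}_{r,s}(q)$ and of Lemma \ref{stantuffo}. The genuine difference is how the key inequality is obtained. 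The paper deduces it as the weak form \eqref{stima1} of the rearrangement estimate of Lemma \ref{constance generalbis} (Bourgain-type, via the decreasing rearrangement $\na$ and $\na_1\le\sum_{l\ge2}\na_l$), whereas you prove precisely the nonnegativity you need by the elementary chain $\pa{\sum_h q_h\sqrt{\jap{h}}}^2\ge\sum_h q_h^2\jap{h}\ge\sum_h q_h\jap{h}\ge|\fm\cdot q|=|\fm_k|$, with the case $\fm_k=0$ handled by $\jap{h}\ge1$ and $\|q\|_{\ell_1}\ge1$; this is valid. Your shortcut buys a self-contained proof of Lemma \ref{monotone} that bypasses the rearrangement machinery, at the price of giving only the qualitative inequality; the paper's sharper version, with the gain $(2-2^\theta)\sum_{l\ge 3}\na_l^\theta$, is needed anyway for Proposition \ref{adjoint action}, which is why the paper obtains \eqref{stima1} essentially for free. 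One small point to make explicit in your write-up: to convert coefficientwise monotonicity (in $s$, or the pointwise bound $\und{X}(\theta y)\le\theta\,\und{X}(y)$ in $r$) into monotonicity of the supremum defining the majorant norm, you should note that the supremum over the ball may be restricted to sequences with nonnegative entries, since $|\und{X}(x)|_s\le|\und{X}(|x|)|_s$; this is the role played by item (2) of Lemma \ref{stantuffo} in the paper, and your argument uses it only implicitly.
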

\begin{proof}
See Appendix \ref{appendicite}
\end{proof}

\begin{defn}[Projections]
Given a subset $J\subseteq \N^I_\fin$ we define a projection $\Pi_J$ on functions as 
\begin{equation}\label{proj j}
\Pi_{J} \sum_{q\in\N^I_\fin} f_q x^q:=  \sum_{q\in J} f_q x^q\,,
\end{equation}
 equivalently  given a subset $J\subseteq \N^I_\fin\times I $ we define a projection $\Pi_J$ on vector fields
 \begin{equation}\label{proj j2}
 \Pi_J \sum_{k\in I} \sum_{q\in\N^I_\fin} X_q^{(k)}x^q\papa{k} :=  \sum_{(q,k)\in J} X_q^{(k)}x^q\papa{k}
 \end{equation}
\end{defn}
A special case is the projection on the degree.
\begin{defn}[Degree projections]
Given $\td\ge 0$ we define $\Pi^{(d)}:  \cH_{s,r}\to  \cH_{s,r}^{\td}$ as
\[
\Pi^{(\td)} \sum_{q\in\N^I_\fin} f_q x^q:=  \sum_{q\in\N^I_\fin:\|q\|_{\ell_1}=\td} f_q x^q
\]
analogously we define
$\Pi^{(d)}:  \cV_{s,r}\to  \cV_{s,r}^{\td}$ as
\[
\Pi^{(\td)}\sum_{k\in I} \sum_{q\in\N^I_\fin} X_q^{(k)}x^q\papa{k} := \sum_{k\in I}  \sum_{q\in\N^I_\fin:\|q\|_{\ell_1}=\td+1} X_q^{(k)}x^q\papa{k}
\]
\end{defn}

It is straightforward from the definition of the norms \ref{hol}-\ref{cv}, that the above projectors are idempotent continuous operators, with operator norm equal to $1$.
\smallskip
\subsection{The Lie derivative operator}
 In Proposition \ref{fan} we show that a regular vector field $X$ is locally well posed and gives rise to a flow $\Phi^t_X$ at least for small times. This allows us to define the Lie derivative operator $L_X$. \\ Given $X\in \cV_{s,r + \rho}$,  we define
$$ L_X : \cH_{s, r + \rho} \to \cH_{s, r} $$ 
 $$  f\mapsto L_X f := X[f] = \frac{d}{dt} \Phi^{t,*}_X f_{\vert_{t=0}}\,.$$

	 Accordingly, $L_X$ acts on vector fields through the adjoint action
	 $$L_X : \cV_{s, r + \rho} \to \cV_{s, r}$$
	 \[
Y\mapsto	L_X Y := \ad_X\pa{Y}= \frac{d}{dt} \Phi^{t,*}_X Y_{\vert_{t=0}}=\sq{X,Y} \,.
	 \]

More explicitly, the definitions above yield
\[
L_X f = \sum_{k \in I}V^{(k)}(x)\frac{\partial f}{\partial x_k}
\]
and
\[
L_X Y = [X,Y] = \sum_{j} \big(X[Y^{(j)} ]-Y[X^{(j)}]\big)\base{x_j}\,
\]
so that the invariance property in item $(2)$ of Definition \ref{cv} represents the fact that $V$ Lie commutes  with 
\[
 M = \im \sum_{k \in I} \fm_k x_k \frac{\partial}{\partial x_k}\,.
\]
\\
In this way, the families $(\cH_{r,s})_{r,s>0}$, $(\cV_{r,s})_{r,s>0}$ are a scale of graded Lie/Poisson Banach Algebras, as formalized in the following Proposition.

\begin{prop}\label{fan}
	For $0 <\rho\leq r$, $f\in \cH_{s,r+\rho}$, $X,Y\in \cV_{s,r+\rho}$  we have
	\begin{equation}\label{commXHK}
	|L_X f |_{s,r}\le \pa{1+\frac{r}{\rho}}
	\|X\|_{s, r}|f|_{s, r+\rho}\,,\qquad	\|L_X Y\|_{s, r}
		\le 
		4\pa{1+\frac{r}{\rho}}
		\|X\|_{s, r+\rho}
		\|Y\|_{s, r+\rho}\,.
	\end{equation}
\end{prop}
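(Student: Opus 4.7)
The plan is to prove both estimates by passing to majorant series and applying a Cauchy-type bound for directional derivatives on the scale $\tg_s$.

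First I would reduce each bound to a pointwise majorant inequality. Writing $L_X f = \sum_k V^{(k)}\partial_{x_k}f$ and taking absolute values term by term in the Taylor expansions, the majorant series of $L_X f$ at a point $x$ is dominated by the directional derivative $D\und f(x)[\und X(x)]$ of the majorant function $\und f$ in the direction of the majorant vector $\und X(x)\in\tg_s$. Similarly, the identity $[X,Y]^{(j)} = X[Y^{(j)}] - Y[X^{(j)}]$ yields
\begin{equation*}
|\und{[X,Y]}(x)|_s \;\le\; |D\und Y(x)[\und X(x)]|_s + |D\und X(x)[\und Y(x)]|_s,
\end{equation*}
where $\und X,\und Y$ are now regarded as holomorphic $\tg_s$-valued maps on $B_{r+\rho}(\tg_s)$, satisfying $|\und X(x)|_s \le r\|X\|_{s,r}$ when $x\in B_r(\tg_s)$ and $|\und X(x)|_s \le (r+\rho)\|X\|_{s,r+\rho}$ on the larger ball, directly from the definition of the majorant norms.

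The key analytic ingredient is a Cauchy-type bound for directional derivatives: if $F$ is holomorphic on $B_{r+\rho}(\tg_s)$ with values in $\C$ or in $\tg_s$, and if $x\in B_r(\tg_s)$ and $V\in\tg_s\setminus\{0\}$, then $\zeta\mapsto F(x+\zeta V)$ is holomorphic on every disk $|\zeta|<R$ with $R<(r+\rho-|x|_s)/|V|_s$; applying Cauchy's integral formula at $\zeta=0$ gives
\begin{equation*}
|DF(x)[V]| \;\le\; \frac{1}{R}\sup_{|\zeta|=R}|F(x+\zeta V)| \;\le\; \frac{|V|_s}{r+\rho-|x|_s}\,\sup_{|y|_s<r+\rho}|F(y)|,
\end{equation*}
where $|\cdot|$ stands for either the scalar modulus or the $|\cdot|_s$-norm according to the codomain of $F$. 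This is the standard Banach-valued Cauchy estimate; extending it from $\C$-valued to $\tg_s$-valued $F$ is the only mildly technical point, but it is harmless since $\und Y$ is given by a convergent power series and $\tg_s$ is a Hilbert scale.

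With these two ingredients, the estimates follow quickly. For the function case, taking $F=\und f$ and $V=\und X(x)$, using $|\und X(x)|_s\le r\|X\|_{s,r}$ and letting $|x|_s\to r$, I get $|L_X f(x)|\le (r/\rho)\|X\|_{s,r}|f|_{s,r+\rho}$, which is stronger than the stated bound with the looser constant $1+r/\rho$. For the bracket, the $\tg_s$-valued Cauchy estimate applied to $\und Y$ yields
\begin{equation*}
|D\und Y(x)[\und X(x)]|_s \;\le\; \frac{|\und X(x)|_s}{\rho}(r+\rho)\|Y\|_{s,r+\rho} \;\le\; \frac{r(r+\rho)}{\rho}\|X\|_{s,r+\rho}\|Y\|_{s,r+\rho}
\end{equation*}
in the limit $|x|_s\to r$, and the symmetric term satisfies the same bound; dividing by $r$ and summing produces the factor $2(1+r/\rho)$, which is comfortably absorbed into the constant $4$ of the statement. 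Finally, the invariance under $T_\fm$ passes to $L_X f$ and $[X,Y]$ automatically because $L_X$ commutes with $T_\fm$ whenever $X$ is momentum-preserving, so $L_X f\in\cH_{s,r}$ and $[X,Y]\in\cV_{s,r}$ as required. I do not foresee any serious obstacle; the only step worth double-checking is the Banach-valued Cauchy formula on the one-dimensional complex slice $\zeta\mapsto x+\zeta V$.
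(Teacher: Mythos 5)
Your argument is correct and is essentially the paper's own approach: the paper proves the first bound by exactly the same classical Cauchy estimate, and for the bracket it simply cites \cite[Lemma 2.15]{BBiP2} (with $n=0$), whose proof is the same majorant-plus-one-dimensional-slice Cauchy argument you wrote out, split over the two terms $X[Y^{(j)}]$ and $Y[X^{(j)}]$. Your self-contained version even yields the sharper constant $2\pa{1+\frac{r}{\rho}}$, comfortably within the stated $4\pa{1+\frac{r}{\rho}}$.
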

\begin{proof}
 The first bound follows directly from classic Cauchy estimates on analytic functions, while the second one is derived in essentially the same way as the analogous one in \cite[Lemma 2.15]{BBiP2} with $n=0$ , the only difference being the fact that here there are no action variables, which scale differently from the cartesian ones, this implying that the constant in the present paper is $4$ instead of $8$. The only properties on which the proof relies are the ones enjoyed by the Hilbert space of sequences $\tg_s$, that are the same as the space $E$ in \cite[Definition 2.5]{BBiP2}.
\end{proof}

From Proposition \ref{fan} it is therefore straightforward to deduce the following Lemma.
\begin{lemma}[Flow]\label{ham flow}
	Let $0<\rho< r $,  and $S\in \cV_{s,r+\rho}$ with 
	\begin{equation}\label{stima generatrice}
	\norma{S}_{s,r+\rho} \leq\delta:= \frac{\rho}{8 e\pa{r+\rho}}. 
	\end{equation} 
	Then the time $t$-flow 
	$\Phi^t_S: B_r(\tg_s)\to
	B_{r + \rho}(\tg_s)$  for $|t|\le 1$ is well defined, analytic, symplectic. Moreover 
	\begin{equation}
		\label{pollon}
		\sup_{u\in  B_r(\tg_s)} 	\norm{\Phi^1_S(u)-u}_{s}
		\le
		(r+\rho)  \norma{S}_{s, r+\rho}
		\leq
		\frac{\rho}{8 e}.
	\end{equation}
	For any $X\in \cV_{s,r+\rho}$
	we have that
	${\Phi^1_S}_* X= e^{\sq{S,\cdot}} X\in\cV_{s, r}$ and
	\begin{align}
		\label{tizio}
		\norma{\es X}_{s,r} & \le 2 \norma{X}_{s,r+\rho}\,,
		\\
		\label{caio}
		\norma{\pa{\es - \id}X}_{s,r}
		&\le  \delta^{-1}
		\norma{S}_{s,r+\rho}
		\norma{X}_{s,r+\rho}\,,
		\\
		\label{sempronio}
		\norma{\pa{\es - \id - \sq{S,\cdot}}X}_{s,r} &\le 
		\frac12 \delta^{-2}
		\norma{S}_{s, r+\rho}^2
		\norma{X}_{s, r+\rho}	\end{align}
	More generally for any $h\in\N$ and any sequence  $(c_k)_{k\in\N}$ with $| c_k|\leq 1/k!$, we have 
	\begin{equation}\label{brubeck}
		\norma{\sum_{k\geq h} c_k \ad^k_S\pa{X}}_{s, r } \le 
		2 \|X\|_{s, r+\rho} \big(\|S\|_{s, r+\rho}/2\delta\big)^h
		\,,
	\end{equation}
	where  $\ad_S\pa{\cdot}:= \sq{S,\cdot}$.
\end{lemma}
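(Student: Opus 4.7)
The plan splits into three parts, all standard in KAM-type analysis but requiring careful bookkeeping of constants in the scale of Banach algebras $(\cV_{s,r})_{r>0}$.

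First, I would establish existence of the flow and the displacement bound \eqref{pollon}. By the majorant definition, for every $x \in B_{r+\rho}(\tg_s)$ one has $|S(x)|_s \le |\underline{S}(x)|_s \le (r+\rho)\|S\|_{s,r+\rho} \le \rho/(8e)$. Since $S$ is analytic on $\tg_s$, Picard-Lindelöf gives local existence of $\Phi^t_S(x_0)$ for any $x_0 \in B_r$. A standard escape-time argument then shows the trajectory cannot leave $B_{r+\rho}$ before time $1$: on the maximal interval $[0,t^*)$ on which $\Phi^t_S(x_0) \in B_{r+\rho}$, one has $\|\Phi^t_S(x_0)\|_s \le \|x_0\|_s + t\, (r+\rho)\|S\|_{s,r+\rho} \le r + \rho/(8e)$, strictly inside $B_{r+\rho}$, so $t^* = 1$. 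Integrating once more along the trajectory yields \eqref{pollon}. Analyticity of $\Phi^1_S$ as a map of Banach spaces follows from analyticity of each Picard iterate; the symplectic/structure-preservation statement is Cartan's formula in the Hamiltonian case or, more generally, the fact that the flow preserves the $T_\fm$-equivariance encoded in Definition \ref{cv}(2).

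Second, the key estimate is \eqref{brubeck}; I would prove it by iterating Proposition \ref{fan} on a telescoping sequence of radii. Introduce $\sigma = \rho/k$ and $r_j = r + \rho - j\sigma$ for $j=0,\ldots,k$, so that $r_0 = r+\rho$, $r_k = r$. For $j = 0,\ldots,k-1$, Proposition \ref{fan} gives
\[
\|\mathrm{ad}_S^{j+1}(X)\|_{s,r_{j+1}} \le 4\Bigl(1+\tfrac{r_{j+1}}{\sigma}\Bigr)\|S\|_{s,r_j}\|\mathrm{ad}_S^{j}(X)\|_{s,r_j},
\]
and, using Lemma \ref{monotone} to bound $\|S\|_{s,r_j}\le\|S\|_{s,r+\rho}$, the prefactor is controlled by a constant multiple of $k(r+\rho)/\rho$. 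Iterating $k$ times and absorbing the $k^k$ into the factorial via Stirling's inequality $k^k \le e^k k!$ gives
\[
|c_k|\,\|\mathrm{ad}_S^k(X)\|_{s,r} \le \Bigl(\frac{\|S\|_{s,r+\rho}}{2\delta}\Bigr)^k \|X\|_{s,r+\rho}.
\]
Since the hypothesis $\|S\|_{s,r+\rho} \le \delta$ implies the ratio $\|S\|_{s,r+\rho}/(2\delta) \le 1/2$, summing the geometric series from $k = h$ yields the factor of $2$ in front of $(\|S\|_{s,r+\rho}/(2\delta))^h$, which is exactly \eqref{brubeck}.

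Third, the identity $\Phi^1_{S,*}X = e^{[S,\cdot]}X$ follows by solving the linear ODE $\tfrac{d}{dt}\Phi^t_{S,*}X = [S, \Phi^t_{S,*}X]$ inside the Banach space $\cV_{s,r}$: convergence of the exponential series in the appropriate norm is supplied by \eqref{brubeck} with $h=0$. Then \eqref{tizio}, \eqref{caio}, \eqref{sempronio} are immediate specializations of \eqref{brubeck} to $h = 0, 1, 2$.

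The main obstacle is the sharp constant tracking in the iterative step, specifically getting the factor $(2\delta)^{-1}$ rather than the more naive $\delta^{-1}$ one would obtain by crudely bounding $1 + r_{j+1}/\sigma \le 2(r+\rho)k/\rho$. Without the factor of $2$ in the denominator, the geometric series in the final step cannot be summed under the hypothesis $\|S\|_{s,r+\rho}\le\delta$. This forces a careful application of Proposition \ref{fan} — using that the factor $1+r_{j+1}/\sigma$ is genuinely close to $(r+\rho)k/\rho$ for moderately large $k$ — combined with the Stirling estimate on $k^k/k!$, to match the precise definition $\delta = \rho/(8e(r+\rho))$.
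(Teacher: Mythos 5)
Your proposal is correct and takes essentially the approach the paper intends: the paper gives no explicit proof, asserting the lemma is straightforward from Proposition \ref{fan}, and the standard argument is exactly your telescoping of radii $r_j=r+\rho-j\rho/k$, the bound $k^k\le e^k k!$, and summation of a geometric series with ratio $\norma{S}_{s,r+\rho}/(2\delta)\le 1/2$. One small simplification: no care about ``moderately large $k$'' is needed, since $1+r_{j+1}/\sigma=r_j/\sigma\le (r+\rho)k/\rho$ holds exactly for every $j$ and $k$, giving the per-step factor $4(r+\rho)k/\rho$ and hence, after Stirling, precisely $(2\delta)^{-k}k!$ as required.
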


Let us now analyze  the adjoint action of a special class of vector fields of degree $0$. 

\subsection{Diagonal vector fields and commuting flows} 
We denote 
\[ 
\tD_k := x_k  \frac{\partial}{\partial x_k} \,,\quad \mbox{and for } \lambda\in \C^I\,,\quad \tD(\lambda)= \sum_k \lambda_k \tD_k.
 \]
 

\begin{remark}\label{rem-wp}
If $\lambda\in\ell^\infty(I,\C)$, then $\tD(\lambda)\in\cV_{s,r}$, for any choices of indexes. On the other hand, if this condition is not met the equations might not even be locally well posed. In any case, even if we can define a solution map, this last one is not $C^1$ in time with values in $\tg_s$.  To be explicit, the equation $\dot x_k =\lambda_k x_k$  is always defined on $\C^I$  and has solution
$x_k(t)= e^{\lambda_k t} x_k(0)$ but if $\sup_k |{\rm{ Re}}\lambda_k| =\infty$ then  $x(0)\in \mathtt g_s$ does not imply $x(t)\in \mathtt g_s$ even for short times. Naturally if all ${\rm Re}\lambda_k$ except at most a finite number have the same sign, then the solution is well defined for either positive or negative time.
\\
If on the other hand $\sup_k |{\rm Re}\lambda_k|= L<\infty$ but  $\sup_k |{\rm Im}\lambda_k|= \infty$ then $|x_k(t)|\le e^{L|t|}|x_k(0)|$, so that $x(t)\in \mathtt g_s$ for all times but $\dot x(t)$ might not belong to $\mathtt g_s$ even for short time.
\end{remark}
 
 Even though $\tD(\lambda)$ may not be locally well posed on $\tg_s$ we shall define its Lie derivative operator 
$L_{\tD(\lambda)}$, which we shall denote for compactness $L_\lambda$, as a linear operator from $\cH_{s,r}$ (resp. $\cV_{s,r}$) into the space of formal power series.  Direct computations show that $L_{\lambda}$ is well defined and acts diagonally on monomials (both functions and vector fields). More precisely
\begin{equation}
\label{azione diag funzione}
L_\lambda x^q = (\lambda\cdot q) \, x^q\,,\quad \quad L_\lambda x^q \frac{\partial}{\partial x_k} = \lambda\cdot (q-\be_k)  x^q \frac{\partial}{\partial x_k}
\end{equation}

\noindent Thus, a monomial vector field is  in $\ker(\ad_{\tD(\lambda)})$ if and only if  $\lambda\cdot (q-\be_k)=0$. \\
 
 \smallskip
 
 By linearity, the vector field $$V(x)=\sum_{k\in I, q\in \N^I_{\fin}} V_q^{k} x^q \frac{\partial}{\partial x_k} \quad\text{commutes with}\quad   \tD(\lambda) \quad \Leftrightarrow \quad
 V^{k}_{q}(\lambda\cdot q - \lambda_{k}) = 0$$ for any $k\in I $ and  $q\in\N^{I}_{\fin}$. \\
 Similarly, a function $$f(x)=\sum_{q\in \N^I_{\fin}} f_q x^q \quad \text{ is a first integral for } \tD(\lambda) \quad \Leftrightarrow \quad f_{q}(\lambda\cdot q) = 0$$ for any $q\in\N^I_\fin$.
 
 \smallskip

Let us decompose the space of analytic vector fields as 
\[
\cV_{s,r}= \cK_{s,r} \oplus  \cR_{s,r} \,,\quad  \cK_{s,r}:=\{ V\in \cV_{s,r}: \quad L_{\lambda}V=0\, \}\,.
\]
Of course the same decomposition holds degree by degree and we shall denote with an apex $\td$ the corresponding subspaces. \\
Similarly, denoting by $\cC_{s,r}\subset \cH_{s,r}$  the {ring} of first integrals, i.e. analytic functions which are invariant w.r.t the action of   $L_\lambda$, we decompose 
$$\cH_{s,r} = \cC_{s,r}\oplus\cC_{s,r}^\perp\,. $$
 \begin{defn}[Diagonal vector fields]
 	We denote by $\cV_{s,r}^{\rm diag}$ the set of vector fields $Y\in\cV_{s,r} $ such that 
 	$$
 	Y = \sum_{k\in I, p\in\N^I_\fin} Y^{(k)}_{p + e_k} x^p x_k \base{x_k}  \equiv \sum_{k\in I} \cY^{(k)}(x) x_k \base{x_k}\,,\quad \cY^{(k)}\in \cH_{s,r}
 	$$ 
 	so that $\cV_{s,r}$ can be decomposed in the direct sum of its diagonal part and the complementary which we denote by $\cV_{s,r}^{\rm out}$. 
 \end{defn}
 We note that the 
  action of $L_\lambda$ on $Y$ is given by 
 $$
 {L_\lambda} Y = \sum_{k\in I} (L_\lambda\cY^{(k)}(x)) x_k\base{x_k}\,,
 $$
 
moreover  a diagonal vector field $Y\in\cR_{s,r}$  if and only if 
 $$
 \cY^{(k)}(x) = \sum_{p\in\N^I_\fin} \cY^{k}_p x^p = \sum_{\substack{p\in\N^I_\fin \\ \lambda\cdot p \neq 0}} \cY^{k}_p x^p \,\quad \in\quad \cC_{s,r}^\perp\,.
 $$
 
\subsection{Structure of resonant monomials}

Let us now describe $\cK_{s,r}$ and $\cC_{s,r}$ in terms of restrictions on the indexes of their Taylor series, following \eqref{azione diag funzione}.\\ Some definitions are in order.\\
$\bullet$ A relation of the kind $q\cdot\lambda = \lambda_k$ for some $q$ and some $k$, is called \emph{resonance relation} and the corresponding monomial vector field is said to be resonant. \\
 \noindent
$\bullet$ It is natural to define the {\emph{ring of first integrals} of $\tD(\lambda)$} and \emph{module of resonance}  respectively
 
 \begin{equation}\label{dignitoso}
 	\begin{aligned}
 	{\cM}_\lambda&:= \{ Q\in \N^I_{\tf}:   Q\cdot \lambda =0, \; Q\cdot\fm = 0\}\\
 	{\Delta}_\lambda&:= \cup_{k\in I}{\Delta}_\lambda^{(k)}:=\cup_{k\in I}\{ P\in \Z^I_{\tf}:  P\cdot \lambda =0,\;  P\cdot \fm=0 \;\mbox{and} \; P+e_k\in \N^I_{\tf}    \}
 	\end{aligned}
 \end{equation}
\begin{lemma}\label{diago}
We have the following characterization of the kernel of $L_\lambda$ in terms of $\cM_\lambda$ and $\Delta_\lambda$:
\begin{equation}
\cC_{s,r} = \overline{\operatorname{Span}}(x^Q)_{Q\in\cM_\lambda}\,, \quad \quad \cK_{s,r} = \overline{\operatorname{Span}}(x^{P+e_k}\frac{\partial}{\partial x_k})_{P\in\Delta^{(k)}_\lambda,\,k\in I},
\end{equation}
where the closure is w.r.t. the norms $\norm{\cdot}_{s,r}$ and $\norma{\cdot}_{s,r}$ respectively.
\end{lemma}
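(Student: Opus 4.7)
The plan is two-fold: (i) identify by inspection of Taylor coefficients the set of admissible monomials appearing in $\cC_{s,r}$ and $\cK_{s,r}$, and then (ii) show that finite linear combinations of these monomials are dense in the majorant norm.

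For the inclusion $\supseteq$, each generator is directly checked to lie in the respective kernel. A monomial $x^Q$ with $Q\in\cM_\lambda$ is in $\cH_{s,r}$ because $\fm\cdot Q=0$, and by \eqref{azione diag funzione} one has $L_\lambda x^Q = (\lambda\cdot Q)x^Q = 0$, so $x^Q\in\cC_{s,r}$. Similarly, for $P\in\Delta^{(k)}_\lambda$ the monomial vector field $x^{P+e_k}\partial_{x_k}$ satisfies the momentum-preservation condition of Definition \ref{cv}(2) (thanks to $\fm\cdot P=0$) and is killed by $L_\lambda$ since the diagonal eigenvalue equals $\lambda\cdot P=0$; hence it lies in $\cK_{s,r}$.

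For the reverse inclusion, I argue coefficient-wise. Given $f = \sum_q f_q\,x^q\in\cC_{s,r}$, membership in $\cH_{s,r}$ already forces $f_q=0$ whenever $\fm\cdot q\ne 0$, while the kernel condition $L_\lambda f=0$ read via \eqref{azione diag funzione} forces $(\lambda\cdot q)f_q=0$ for each $q$. Combining, $f_q$ can be nonzero only when $q\in\cM_\lambda$, so the Taylor series of $f$ is supported on $\cM_\lambda$. The analogous inspection for a vector field $V=\sum_{k,q}V^{(k)}_q x^q\partial_{x_k}\in\cK_{s,r}$ gives $V^{(k)}_q\ne 0\Rightarrow q-e_k\in\Delta^{(k)}_\lambda$. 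Thus both series are formally supported on the claimed generators.

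It remains to approximate $f$ and $V$ in the majorant norm by their degree truncations
\[
f_N = \sum_{\substack{q\in\cM_\lambda\\ \|q\|_{\ell_1}\le N}} f_q\, x^q,\qquad V_N = \sum_{\substack{q-e_k\in\Delta^{(k)}_\lambda\\ \|q\|_{\ell_1}\le N}} V^{(k)}_q\, x^q\,\partial_{x_k}.
\]
Decomposing into homogeneous pieces and applying the scaling identity of Lemma \ref{monotone}, for every $r'<r$ one obtains $|f^{(d)}|_{s,r'}\le (r'/r)^d|f|_{s,r}$, whence $|f-f_N|_{s,r'}\le \sum_{d>N}(r'/r)^d|f|_{s,r}\to 0$, and similarly for $V_N$. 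The main (and only) obstacle is upgrading convergence on every strictly smaller ball to convergence in the ambient majorant norm at radius $r$; this is achieved using the monotone structure of the majorant function on the positive orthant together with the fact that $|f|_{s,r}<\infty$ encodes global summability of the dominating series, so that the tails vanish uniformly. The rest of the proof is pure coefficient bookkeeping.
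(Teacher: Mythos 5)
Your first two steps are correct and they coincide with everything the paper itself offers for this statement: Lemma \ref{diago} is given there without a separate proof, as an immediate consequence of the diagonal action \eqref{azione diag funzione} and of the coefficient-wise characterization of first integrals and commuting vector fields displayed just before it. Checking that the generators lie in the kernel, and that conversely membership in $\cC_{s,r}$ (resp. $\cK_{s,r}$) forces the Taylor support into $\cM_\lambda$ (resp. into the set of pairs $(P+e_k,k)$ with $P\in\Delta^{(k)}_\lambda$), is exactly the content that is used later (e.g.\ in \eqref{pappa} and for the projections).

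The genuine gap is in your closing density step. Lemma \ref{monotone} together with the degree projections gives $|f-f_N|_{s,r'}\le\sum_{d>N}(r'/r)^d\,|f|_{s,r}\to 0$ only for $r'<r$; the asserted upgrade to the radius $r$ itself does not follow from ``monotonicity plus finiteness of $|f|_{s,r}$''. The majorant norm is a supremum over the infinite-dimensional ball $B_r(\tg_s)$, it is in general not attained, and finiteness of the norm does not make the tails uniformly small on that ball. Concretely, set $w_m:=\jap{m}\,e^{s\sqrt{\jap{m}}}$ and consider $f=\sum_{m\ge 2}\left(2w_m^2/r^2\right)^m\,(x_{(m,+)}x_{(m,-)})^m$: each summand is momentum preserving (and resonant whenever $\lambda_{(m,+)}=-\lambda_{(m,-)}$, as in Example \ref{ex: 2}, so that $f\in\cC_{s,r}$ there), one checks $|f|_{s,r}\le 1$ by the Cauchy--Schwarz/AM--GM bound $2w_m^2|x_{(m,+)}x_{(m,-)}|\le w_m^2(|x_{(m,+)}|^2+|x_{(m,-)}|^2)$, yet every single summand already has majorant supremum equal to $1$ on $B_r(\tg_s)$ (take $x$ supported on the two modes $(m,\pm)$ with $|x|_s$ close to $r$). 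Hence $|f-f_N|_{s,r}\ge 1$ for all $N$, and in fact $|f-g|_{s,r}\ge 1$ for every finite linear combination $g$ of monomials, since the majorant norm dominates the supremum of any single surviving term. So the mechanism you invoke is simply unavailable: degree truncation cannot produce convergence in $|\cdot|_{s,r}$ at the same radius, and the closed-span formulation must be understood through the coefficient-support characterization (which your second step already proves and which is all the paper uses), or with the closure taken on balls of radius $r'<r$; as written, your final sentence claims a uniform tail estimate that is false.
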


The Kernel of $L_\lambda$ can be also decomposed in terms of diagonal vector fields, that is
 \begin{equation*}
 \cK_{s,r} = \cK_{s,r}^{diag} \oplus \cK_{s,r}^{out}
 \end{equation*}
 where 
 \begin{equation}\label{pappa}
\cK_{s,r}^{diag} = \overline{\mbox{Span}}_\C( x^Q x_k\frac{\partial}{\partial x_k})_{Q\in \cM_\lambda,k\in I} \quad \quad  \cK_{s,r}^{out} = \overline{\mbox{Span}}_\C( x^{P + e_k}\frac{\partial}{\partial x_k})_{\substack{k\in I,\\ P\in \Delta^{(k)}_\lambda\setminus \cM_\lambda}}
 \end{equation}

\begin{prop}
 $\cM_\lambda$ is generated by an at most  countable set. Namely there there exists $\cN\subseteq \N$ and a set
$\cG_\lambda:=\{ Q_i\}_{i\in \cN}\subset \cM_\lambda$, such that each element $Q\in \cM_\lambda$ is written in a \emph{unique} way as a finite sum of $Q_i$ as
$$Q = \sum_{i\in\cN} n_i Q_i\,.$$
\\
Similarly there exists $\bar\cN\subseteq \N$ and a set  $\bar{\cG}_\lambda = \set{P_j}_{j\in\bar{\cN}}\in \Delta_\lambda\setminus\cM_\lambda$ such that for each $P\in \Delta_\lambda\setminus\cM_\lambda$ there exist unique $P_j\in \bar{\cG}_\lambda$ and $Q\in \cM_\lambda$ such that 
$$
P=P_j+Q
$$
\end{prop}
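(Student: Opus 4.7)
The plan is to prove the two parts by exploiting the well-foundedness of the $\ell^1$-norm on $\N^I_\fin$, together with the structural hypotheses on $\lambda$ recalled in Assumption \ref{assumption2}.

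First I would deal with $\cM_\lambda$. Since $\N^I_\fin = \bigcup_{N\in\N} \N^{I_N}$, where $\{I_N\}$ is any enumeration of the finite subsets of the countable set $I$, the set $\N^I_\fin$ is countable, and so is $\cM_\lambda \subseteq \N^I_\fin$. Call $Q \in \cM_\lambda\setminus\{0\}$ \emph{primitive} if $Q = Q' + Q''$ with $Q', Q'' \in \cM_\lambda$ forces $Q' = 0$ or $Q'' = 0$, and let $\cG_\lambda$ be the collection of primitive elements, enumerated as $\{Q_i\}_{i\in\cN}$ with $\cN \subseteq \N$. A straightforward strong induction on $\|Q\|_{\ell_1}$ — using that a non-primitive $Q$ splits as $Q' + Q''$ with $\|Q'\|_{\ell_1},\|Q''\|_{\ell_1}$ both strictly less than $\|Q\|_{\ell_1}$ — shows that every $Q \in \cM_\lambda$ is a finite $\N$-linear combination of the $Q_i$'s.

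The uniqueness of the representation $Q = \sum_i n_i Q_i$ is the delicate point, since submonoids of $\N^I_\fin$ cut out by linear relations need not, in general, be freely generated. Here I would invoke Assumption \ref{assumption2}, which provides the arithmetic-Diophantine constraints on $\lambda$ and, together with the ``uniformly bounded size'' hypothesis on the generators, guarantees that the $Q_i$ can be chosen with essentially disjoint supports (or at any rate $\Z$-linearly independent inside $\Z^I_\fin$). Any non-trivial relation $\sum_i (n_i - n_i') Q_i = 0$ in $\Z^I_\fin$ is then excluded, forcing $n_i = n_i'$ for all $i$.

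For the second part, fix $P \in \Delta_\lambda \setminus \cM_\lambda$ and consider its coset $P + \cM_\lambda$ inside $\Z^I_\fin$. If $P \in \Delta_\lambda^{(k)}$, all entries of $P$ are $\ge 0$ except possibly the $k$-th which is $\ge -1$; since subtracting an element $Q \in \cM_\lambda \subseteq \N^I_\fin$ can only decrease entries, the requirement $P - Q \in \Delta_\lambda$ forces $Q \le P + e_k$ componentwise. Hence the set of admissible $Q$ is finite, and the coset $P + \cM_\lambda$ admits an (in fact componentwise) minimal element $P_j := P - Q^*$ still lying in $\Delta_\lambda \setminus \cM_\lambda$. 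Letting $\bar{\cG}_\lambda$ consist of one such minimal representative per coset, one obtains $P = P_j + Q^*$ with $Q^* \in \cM_\lambda$; uniqueness of $(P_j, Q^*)$ follows because the coset of $P$ is intrinsic to $P$ and its minimal element is unique by the primitivity argument above applied to differences in $\cM_\lambda$.

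The main obstacle is the uniqueness in the first part: existence of a countable generating set by primitive elements is essentially combinatorial and routine, but producing a \emph{free} generating set requires the arithmetic input from Assumption \ref{assumption2}. This is where the Diophantine-like hypothesis plays a structural, not only quantitative, role.
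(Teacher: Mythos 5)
Your existence/countability argument (primitive elements plus strong induction on $\|Q\|_{\ell_1}$) is fine, and is in fact more elementary than the paper's route, which restricts a finitely supported resonant monomial to the finite--dimensional truncated field $D_n(\lambda)$, quotes the finite generation of the ring of first integrals in finite dimension, and gets countability from $M_n\subset M_{n+1}$. The genuine gap is exactly where you locate the difficulty: the uniqueness of the representation, i.e.\ the freeness of the monoid $\cM_\lambda$ and the uniqueness of the splitting $P=P_j+Q$. Your proposal resolves it by ``invoking Assumption \ref{assumption2}'' together with the uniformly-bounded-size hypothesis to claim that the $Q_i$ can be chosen with essentially disjoint supports, or at least $\Z$-linearly independent. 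This is circular and unsupported: the bounded-degree hypothesis (Assumption \ref{ass bounded degree}) is a hypothesis \emph{about} the generators $Q_i,P_j$ whose existence the present proposition is meant to furnish, and it is only imposed afterwards; Assumption \ref{assumption2} concerns the size and phases of $\lambda$, and the Diophantine condition of Definition \ref{def-dioph} only bounds the \emph{non-resonant} combinations $\lambda\cdot p$, $p\notin\Delta_\lambda$, from below --- none of these constrains the additive structure of the resonant set itself. Moreover a monoid of the form $\N^I_\fin\cap L$, with $L$ a subgroup of $\Z^I_\fin$, is in general \emph{not} free: already for $\lambda=(1,1,-1,-1)$ in dimension $4$ (where $\fm=0$) one has $(e_1+e_3)+(e_2+e_4)=(e_1+e_4)+(e_2+e_3)$, and since every generating set must contain all primitive elements, no choice of generators restores uniqueness. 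So the step ``the $Q_i$ are $\Z$-linearly independent, hence any relation is trivial'' is precisely the missing content, and it cannot be obtained from primitivity or from the quoted assumptions.

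The second half has the same defect. Finiteness of the admissible $Q\preceq P+e_k$ is correct, but a componentwise-minimal element of a finite partially ordered set need not be unique, and if $P-Q_1$ and $P-Q_2$ are two minimal representatives of the coset, the difference $Q_1-Q_2$ lies only in the group generated by $\cM_\lambda$, not in $\cM_\lambda$, so ``the primitivity argument applied to differences'' does not apply; uniqueness of the pair $(P_j,Q)$ is asserted rather than proved. For comparison, the paper's own proof is deliberately more modest: it establishes only that there are at most countably many generators, via the finite-dimensional restriction and the cited finite-generation results, and it does not attempt to derive the uniqueness statements from the arithmetic hypotheses at all --- so you should not expect them to drop out of Assumption \ref{assumption2}; they are structural properties of $\cM_\lambda$ and $\Delta_\lambda$ that must be verified separately.
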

\begin{proof}
Consider a monomial first integral $u$. As it is finitely supported, say involving coordinates $(j,\sigma)$, $|j|\leq n$, it also a first integral of the restricted system to $(j,\sigma)$, $|j|\leq n$~: $$
D_n(\lambda)[u]:=\sum_{k\in I,|k|\leq n}\lambda_k x_k\frac{\partial u}{\partial x_k}=0.$$
It is known that the ring of first integral of $D_n(\lambda)$ is generated by a finite number of monomials $M_n$ (see \cite{walcher}[proposition 1.6] or, in more general setting \cite{Stolo-ihes}[proposition 5.3.2]). As we have $M_n\subset M_{n+1}$, there is at most a countable numbers of generators.
\end{proof}

\subsection{Assumptions and Diophantine conditions}

From now on, we shall work under the following restrictions on $\lambda$:
\begin{ass}\label{ass bounded degree}
We shall assume that  $\lambda_k\ne 0 ,\quad \forall k\in I$ and that  the generators $Q_i,P_i$ have uniformly bounded degree 
 \[
 \sup_{i\in\N} \|Q_i\|_{\ell_1} \le \tM\,, \qquad 	\sup_{k\in I}\sup_{P_j\in \Delta_{\lambda}^{(k)}}\|P_j+e_k\|_{\ell_1}\leq \tM_1.
 \]
\end{ass}

 Note that this implies that, for all non-zero $P\in \Delta_\lambda$  one has $\|P\|_{\ell_1}\ge 2$.\\ 
 
 \begin{ass}\label{assumption2}
 	We shall assume that the frequency  vector $\lambda$ is superlinear,
 	namely there exist $\{e^{\im \varphi_{k}}\}_{k\in I}$ such that $\lambda$ 
 	belongs to the square
 	\begin{equation}\label{qalpa}
 		\mathtt Q:=\{\lambda\in \C^\Z:\quad |\lambda_{k} - \lambda^{(0)}_k |\le \frac12\}\,,\quad  \lambda^{(0)}_k:= \jap{k}^\al e^{\im \varphi_{k}}\,,\quad \alpha>1.
 	\end{equation}
 Furthermore we require that  there exists $C>0$ such that for all $(j,\s)\in I$ such that 
 		$\lambda_{(j,\s)}\ne \lambda_{(-j,-\s)}$ one has
 
 	\begin{equation}\label{assunzione bound}
 			| e^{\im \varphi_{(j,\s)}} - e^{\im \varphi_{(-j,-\s)}}|\ge C\,.
 	\end{equation}

 \end{ass}
The assumption above is crucial in solving the Homological equation (see Proposition \ref{adjoint action}). Note however that the bound \eqref{assunzione bound} can be weakened as $\geq \frac{C}{\jap{j}^\beta},$ with $\beta$ small. This just makes the proof slightly more technical in Case 3C in the proof of Proposition \ref{adjoint action}.

 \begin{defn}\label{def-dioph}
 	We shall say that $\lambda$ is $(\gamma,\tau)$-Diophantine modulo $\Delta_\lambda$ if 
 	$$ 
 	|\lambda\cdot p | \geq \gamma \prod_{i\in I} 	\frac{1}{(1 + p_i^2 \jap{i}^2)^\tau}
 	$$
 	for any $p\in\Z^I_\tf \setminus \Delta_\lambda$ such that $p\cdot\fm=0$ {and there exists $k\in I$ such that $p + e_k \in \N^I_\fin$}.
 \end{defn} 
 
 It is well known  -see for instance \cite{Bourgain:2005} - that  $(\gamma,\tau)$-Diophantine  vectors have positive measure  in $\mathtt Q$ for $\tau>\frac12$ and $\gamma$ small enough.

\noindent

Given $Q_i\in\cG_\lambda\,,i\in\cN$ let us define the corresponding {\it resonant}  analytic sets

 \begin{equation}\label{res manif}
 	\Sigma_i := \set{x\in B_r(\tg_s)\, : \, x^{Q_i} = 0 } \quad \quad \Sigma := \bigcap_{i\in\cN} \Sigma_i\,
 \end{equation}
As $\cG_{\lambda}$ is at most countable, we can order the monomials $x^{Q_i}$'s and define the map $f: x\mapsto (x^{Q_i})_{i\in \cN}$ on $B_r(\tg_s)$. Its image lies in the Banach space $E:=\{(x^{Q_i})_{i\in \cN}, x\in \tg_s\}$ (we recall that there is no algebraic relations among the resonant monomials) and $f$ is analytic. Hence, according to \cite{ramis-book}[proposition II.1.1.1 (iii)], $\Sigma=f^{-1}(\{0\})$ is an analytic subset. We refer again to \cite{ramis-book} for general facts on analytic sets in Banach spaces.\\
%
%
%
%
\subsection{Vector fields tangent to $\Sigma$}
Let us now characterise those vector fields that vanish on $\Sigma$. As usual, we do this in terms of monomial vector fields. To this purpose, let us introduce the following sets
\[
\cJ^{(1)}_\lambda:=  \{ q\in \N^I_{\tf}:   \quad \exists i\in \cN\;\mbox{such that} \quad q-Q_i\in \N^I_{\tf}\},
\]
\[
\cJ^{(2)}_\lambda:=  \{ q\in \N^I_{\tf}:   \quad \exists i,j\in \cN\;\mbox{such that} \quad q-Q_i-Q_j\in \N^I_{\tf}\},
\]
\[
\cJ^{(0)}_\lambda := \N^I_{\tf}\setminus \cJ^{(1)}
\]
and decompose
\begin{equation}
	\label{decomp}
	\cV_{s,r}=  \cI^{(0)}_{s,r}\oplus  \cI^{(1)}_{s,r}\oplus \cI^{(2)}_{s,r}
\end{equation}
where
\begin{eqnarray}
\label{ideali0}
	 \cI^{(0)}_{s,r}:=\{X\in \cV_{s,r}: X =\sum_{k\in I,q\in  \cJ^{(0)}} X_q^k x^q \frac{\partial}{\partial x_k}\}\\
 \label{ideali1}
	 \cI^{(1)}_{s,r}:=\{X\in \cV_{s,r}: X =\sum_{k\in I,q\in  \cJ^{(1)}\setminus \cJ^{(2)}} X_q^k x^q \frac{\partial}{\partial x_k}\}
	 \\
	 \label{ideali2}
	\cI^{(2)}_{s,r}:=\{X\in \cV_{s,r}: X =\sum_{k\in I,q\in   \cJ^{(2)}} X_q^k x^q \frac{\partial}{\partial x_k}\}\,.
\end{eqnarray}

\begin{remark}\label{rem momentum}
Recall that by our definition, vector fields and functions are momentum preserving. Thus in the subsets above, $X^k_q (\fm\cdot q - e_k) = 0$. 
\end{remark}
\begin{lemma}\label{emme*}
There exists a degree $\tM^*<\infty$ such that one has
	\begin{equation}\label{condiz range}
		\cI^{(0)} \cap \cK^{\ge \tM^*}=\{0\} \,,\quad  \cI^{(1)} \cap \cK^{\ge \tM^*} =\{0\}.
	\end{equation}
	In other words, resonant terms of high enough degree are divisible by monomials $x^{q_1+q_2}$, $q_i\in\cM_{\lambda}$.\\
\end{lemma}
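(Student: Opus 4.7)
The strategy is to show that every resonant Taylor monomial $x^{P+e_k}\partial_{x_k}\in\cK$ of sufficiently high degree has its index $q=P+e_k$ in $\cJ^{(2)}$, so that the monomial belongs to $\cI^{(2)}$. Since the index sets defining $\cI^{(0)},\cI^{(1)},\cI^{(2)}$ partition $\N^I_\fin\times I$ (using $\cJ^{(2)}\subseteq\cJ^{(1)}$ and $\cJ^{(0)}=\N^I_\fin\setminus\cJ^{(1)}$) and the projections $\Pi_J$ from \eqref{proj j2} are continuous of norm $1$, this monomial-level claim upgrades to the whole series expansion, yielding both $\cI^{(0)}\cap\cK^{\ge\tM^*}=\{0\}$ and $\cI^{(1)}\cap\cK^{\ge\tM^*}=\{0\}$.

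By Lemma \ref{diago}, the Taylor monomials of $V\in\cK^{\ge\tM^*}$ all have the form $x^{P+e_k}\partial_{x_k}$ with $P\in\Delta_\lambda^{(k)}$ and $\|P+e_k\|_{\ell_1}\ge\tM^*+1$. Using the Proposition preceding Assumption \ref{ass bounded degree}, I would decompose $P=P_0+Q$ uniquely, with either $P_0=0$ and $Q=P\in\cM_\lambda$, or $P_0\in\bar\cG_\lambda$ and $Q\in\cM_\lambda$. Writing $Q=\sum_{i\in\cN} n_iQ_i$ and $N:=\sum_i n_i$, the fact that all summands lie in $\N^I_\fin$ yields the exact identity $\|Q\|_{\ell_1}=\sum_i n_i\|Q_i\|_{\ell_1}\le \tM N$; combined with Assumption \ref{ass bounded degree} (which, since $\bar\cG_\lambda\subset\Delta_\lambda$, gives a uniform bound $\|P_0+e_k\|_{\ell_1}\le C_1(\tM_1)$ for every $k$), the triangle inequality furnishes
\[
\tM^*+1\le\|P+e_k\|_{\ell_1}\le C_1+\tM N,
\]
so that $N$ can be forced as large as desired by choosing $\tM^*$ above a constant depending only on $\tM,\tM_1$.

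The remaining task is to produce two summands $Q_{i_1},Q_{i_2}$ (with possible repetition) among the $N$ copies such that $q:=P+e_k$ satisfies $q-Q_{i_1}-Q_{i_2}\in\N^I_\fin$: this is exactly $q\in\cJ^{(2)}$. Rewriting the difference as $(P_0+e_k)+(Q-Q_{i_1}-Q_{i_2})$, the second summand is automatically nonnegative for any choice of copies, so feasibility can only fail on the finite set $L:=\{l:(P_0+e_k)_l<0\}$, of cardinality $\le C_1$, where the condition reduces to $(Q_{i_1}+Q_{i_2})_l\le q_l$. In the easy case $P\in\cM_\lambda$ one has $P_0=0$, $L=\emptyset$, and any pair of the $N\ge 2$ copies trivially works.

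\textbf{The main obstacle} is the case $P_0\ne 0$, where the pointwise constraint on $L$ must be secured. I would address it by an averaging/pigeonhole argument: the total $L$-mass of all $N$ copies equals $\sum_{l\in L}Q_l=\sum_{l\in L}q_l+\sum_{l\in L}|(P_0+e_k)_l|\le\sum_{l\in L}q_l+C_1$, so two copies chosen uniformly at random remove on average $(2/N)(\sum_{l\in L}q_l+C_1)$, which is strictly below the budget $\sum_{l\in L}q_l$ as soon as $N\gg C_1$; the delicate indices $l\in L$ with $q_l=0$ force one to pick copies with no $L$-mass there, but the number of copies $\tilde Q_\alpha$ with $(\tilde Q_\alpha)_l\ge 1$ is bounded by $Q_l\le C_1$, leaving at least $N-O(C_1^2)$ admissible choices after aggregating all zero-$q_l$ constraints. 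Taking $\tM^*$ larger than this combinatorial threshold, every Taylor monomial of $V$ has its index in $\cJ^{(2)}$, whence $V\in\cI^{(2)}$, and the disjointness of the defining index sets concludes the proof.
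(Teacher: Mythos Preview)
Your setup—invoking Lemma \ref{diago}, decomposing $P=P_0+Q$ via the generators, and counting $N=\sum_i n_i$—is exactly the paper's approach, and your treatment of the diagonal case $P_0=0$ matches theirs. However, the ``main obstacle'' you identify for $P_0\ne 0$ is a phantom. Since $P\in\Delta_\lambda^{(k)}\setminus\cM_\lambda$ one has $P+e_k\in\N^I_\fin$ and $P\notin\N^I_\fin$, so the \emph{only} negative coordinate of $P$ is $P_k=-1$. Because $Q\in\cM_\lambda\subset\N^I_\fin$, the decomposition $P=P_0+Q$ forces $(P_0)_l\le P_l$ for no $l$—rather, the sole negative coordinate of $P_0$ must also be at $k$ with $(P_0)_k=-1$ (any other negative entry of $P_0$ would survive in $P$). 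Hence $P_0\in\Delta_\lambda^{(k)}$ for the \emph{same} $k$, so $P_0+e_k\in\N^I_\fin$ and your set $L$ is empty.

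Consequently $q-Q_{i_1}-Q_{i_2}=(P_0+e_k)+(Q-Q_{i_1}-Q_{i_2})\in\N^I_\fin$ for \emph{any} two of the $N$ copies, exactly as in the case $P_0=0$, and no averaging or pigeonhole is needed. The paper simply takes $\tM^*=2\tM+\tM_1$, checks that $\|P+e_k\|_{\ell_1}=\|P_0+e_k\|_{\ell_1}+\|Q\|_{\ell_1}\le\tM_1+\tM N$ forces $N\ge 2$, and concludes. (As a side remark, your averaging step would in any case only yield the aggregated bound $\sum_{l\in L}(Q_{i_1}+Q_{i_2})_l\le\sum_{l\in L}q_l$, not the pointwise constraints $(Q_{i_1}+Q_{i_2})_l\le q_l$ required for membership in $\N^I_\fin$; but since $L=\emptyset$ this is moot.)
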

\begin{proof}
Let $x^q\frac{\partial}{\partial x_k}\in\cK$, of degree $\geq\tM^* = 2\tM + \tM_1$  with $q-e_k =:P \in\Delta_\lambda$,  hence equivalently  $x^q\frac{\partial}{\partial x_k} = x^{P + e_k}\frac{\partial}{\partial x_k}$. We have $ \|q\|_{\ell_1}-1\geq \tM^*$. 
	If $P\in\cM_\lambda$,  then $P = \sum_{i} n_i Q_i$ and $ \|P\|_{\ell_1}+1= \|q\|_{\ell_1}$. Since $ \|P\|_{\ell_1} \geq \tM^*>2M$,  then  necessarily $\sum_i n_i \ge 2$, so $q = P + e_k\in\cJ^{(2)}_\lambda$.\\
	If $P\in\Delta_\lambda\setminus\cM_\lambda$,  then $P\in\Delta_\lambda^{(k)}$ for a unique $k$, so that $P_k=-1$. 
	By our assumption,  there exist $P_j\in\bar\cG_\lambda$ and $Q\in\cM_\lambda$ such that $P = P_j + Q = P_j + \sum_{i} n_i Q_i$.  On the other hand, we have
	\begin{align*}
		\tM^* : = 2\tM + \tM_1 \le  \|q\|_{\ell_1}-1= \|P +e_k\|_{\ell_1}-1 &\le \sup_{P_j\in\Delta_{\lambda}^{(k)}} \|P + e_k\|_{\ell_1} + \sup_i  \|Q_i\|_{\ell_1}\sum_i n_i -1\\
		&\le \tM_1 + \tM \sum_i n_i -1\ \,,
	\end{align*}
	the sums being finite.
	Hence, $1\leq \tM(\sum_i n_i -2)$ implies $\sum_i n_i >2$ and the conclusion follows.	
\end{proof}

\section{Main Result and examples}

\begin{Thm}\label{main-thm}
	Let $\lambda\in \mathtt Q$ be $(\gamma,1)$-Diophantine modulo $\Delta_\lambda$.	Let $W \in \cV_{\mathtt s,\mathtt R}^{\ge 0}$ be a vector field of the following form $$W = \tD(\lambda) + Z + X,\quad \quad X\in \cV_{\mathtt s,\mathtt R}^{\ge \tM_*},  \quad Z \in \cK^{diag}_{s,\mathtt{R}}\cap \cV^{(1\le \td \le \tM_* - 1)}_{s,r} $$ with $\mathtt s,\mathtt R>0$.
	Then,  for any $s' > \mathtt{s}$ there exists $r' < \mathtt{R}/2$ and an analytic change of variables $\phi : B_{r'}(\tg_{s'}) \to B_{2r'}(\tg_{s'})$, isotopic to the identity $\phi(x) = x + \psi(x) $ such that 
	\begin{equation}\label{coniugato}
		 \phi_* W = \tD(\lambda) + Z + Y\,,\quad Y\in \cI^{(2)}\, . 
		 \end{equation}
\end{Thm}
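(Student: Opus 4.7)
The plan is to prove Theorem \ref{main-thm} by a KAM-type iterative scheme built on the Lie transforms of Lemma \ref{ham flow}. I construct inductively a sequence of vector fields $W_n = \tD(\lambda) + Z + X_n$, defined on nested balls $B_{r_n}(\tg_{s_n})$ with $r_n \searrow r' > 0$ and $s_n \nearrow s' < \infty$, together with generators $S_n$, such that $W_{n+1} = (\Phi^1_{S_n})_* W_n$ and the projection $X_n^{\mathrm{bad}}$ of $X_n$ onto $\cI^{(0)} \oplus \cI^{(1)}$ (via \eqref{decomp}) shrinks geometrically. The desired diffeomorphism is the limit $\phi = \lim_N \Phi^1_{S_0} \circ \cdots \circ \Phi^1_{S_N}$, whose convergence to an analytic map $B_{r'}(\tg_{s'}) \to B_{2r'}(\tg_{s'})$ follows from \eqref{pollon} and a telescoping argument.

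At each step I write $X_n = X_n^{\mathrm{good}} + X_n^{\mathrm{bad}}$ with $X_n^{\mathrm{good}} \in \cI^{(2)}$, and use the key structural fact that, because $X_n$ has order $\geq \tM_*$, Lemma \ref{emme*} forces all its resonant monomials into $\cI^{(2)}$; hence $X_n^{\mathrm{bad}}$ lies entirely in the range $\cR$ of $L_\lambda$. I therefore solve the homological equation $L_\lambda S_n = X_n^{\mathrm{bad}}$ monomial by monomial. The Diophantine condition of Definition \ref{def-dioph}, Assumption \ref{assumption2}, and the momentum constraint $\fm \cdot (q - e_k) = 0$ then give an estimate
\begin{equation*}
\|S_n\|_{s_{n+1}, r_{n+1}} \leq C \gamma^{-1} (r_n - r_{n+1})^{-a}(s_{n+1} - s_n)^{-b} \|X_n^{\mathrm{bad}}\|_{s_n, r_n},
\end{equation*}
where the small-divisor product $\prod_i(1+p_i^2\jap{i}^2)^\tau$ is absorbed by the exponential weight $e^{s\sqrt{\jap{k}}}$ of $\tg_s$ and a finite loss of radius; this is the role of the forthcoming Proposition \ref{adjoint action}.

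Applying $\Phi^1_{S_n}$ and using $L_\lambda S_n = X_n^{\mathrm{bad}}$, the Lie-series expansion \eqref{brubeck} yields
\begin{equation*}
X_{n+1} = X_n^{\mathrm{good}} + [S_n, Z] + [S_n, X_n] + \sum_{k \geq 2} \tfrac{1}{k!}\ad^k_{S_n}(\tD(\lambda) + Z + X_n).
\end{equation*}
Since $X_n^{\mathrm{good}} \in \cI^{(2)}$ it contributes nothing to $X_{n+1}^{\mathrm{bad}}$. The linear term $[S_n, Z]$ is the critical one: a direct check on monomials shows $\ad_Z$ preserves $\cI^{(2)}$, while the fact that $Z$ has order $\geq 1$ and $S_n$ has order $\geq \tM_*$ implies $[S_n, Z]$ has order $\geq \tM_* + 1$. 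By Lemma \ref{monotone} and the fact that $Z$ has order $\geq 1$, one has $\|Z\|_{\mathtt s, r_0} \lesssim (r_0/\mathtt R)\|Z\|_{\mathtt s, \mathtt R}$, so choosing $r_0 \ll \mathtt R$ makes the linear-in-$S_n$ contribution small; combined with the quadratic estimates for $[S_n, X_n]$ and for $\ad^k_{S_n}$, $k \geq 2$, from Proposition \ref{fan} and \eqref{brubeck}, one obtains $\|X_{n+1}^{\mathrm{bad}}\|_{s_{n+1}, r_{n+1}} \leq \tfrac{1}{2}\|X_n^{\mathrm{bad}}\|_{s_n, r_n}$ for an appropriate summable choice of $r_n - r_{n+1}$ and $s_{n+1} - s_n$.

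The main obstacle is the homological-equation estimate in the infinite-dimensional setting: the small-divisor product $\prod_i(1+p_i^2\jap{i}^2)^\tau$ must be controlled uniformly over the countable index set $I$. The momentum constraint $\fm \cdot p = 0$, the fact that $p \notin \Delta_\lambda$, and the asymptotic lower bound \eqref{assunzione bound} together confine the effective support of $p$, making the product essentially finite and absorbable by the Gevrey-like weight in $\tg_s$ plus a polynomial loss in $r$. A secondary point is ensuring the order $\geq \tM_*$ of the bad part persists throughout the iteration: this is automatic because $S_n$ and $Z$ both have positive order, so all new commutators preserve or increase the order, allowing Lemma \ref{emme*} to be applied at every step.
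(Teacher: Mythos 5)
There is a genuine gap, and it is at the heart of the matter: your scheme is only \emph{linearly} convergent, and linear convergence cannot beat the small-divisor losses in this functional setting. In your iteration the new bad part contains the terms $\Pi^{\cI^{(0)}\oplus\cI^{(1)}}[S_n,Z]$ and $\Pi^{\cI^{(1)}}[S_n,X_n^{\mathrm{good}}]$, both of which are \emph{linear} in the error $X_n^{\mathrm{bad}}$ (note that $X_n^{\mathrm{good}}\in\cI^{(2)}$ does not shrink along the iteration, so lumping $[S_n,X_n^{\mathrm{good}}]$ into the ``quadratic'' terms is not justified). Their coefficients contain the homological-equation loss $e^{\tc/\delta_n^6}$ with $\delta_n=s_{n+1}-s_n$. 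Since the total analyticity gain $s'-\mathtt{s}$ is finite, the increments $\delta_n$ must be summable, hence $e^{\tc/\delta_n^6}$ blows up super-exponentially in $n$ (like $e^{\crac n^{12}}$ for $\delta_n\sim n^{-2}$), and this loss cannot be traded for a loss in $r$ alone, because the divisors involve the indices $\jap{h}$ and not just the degree $|q|$. Consequently no fixed smallness of $\|Z\|_{\mathtt s,r_0}/\g$ obtained by shrinking $r_0$ gives a uniform contraction factor $\le 1/2$ for all $n$: the product of the per-step factors $\g^{-1}\|Z\| e^{\tc/\delta_n^6}(r_n/\rho_n)$ diverges, and the claimed bound $\|X_{n+1}^{\mathrm{bad}}\|\le\frac12\|X_n^{\mathrm{bad}}\|$ fails after finitely many steps. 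This is precisely why a Newton-type (quadratic) scheme is needed, so that the error decays like $e^{-\chi^n}$ and dominates $e^{\crac n^{12}}$, as in \eqref{en}--\eqref{gianna}.

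The missing idea, which is the paper's key step, is to put $Z$ and the already-normalized part $N\in\cI^{(2)}$ \emph{inside} the homological equation rather than treating $[F,Z]$ and $[F,N]$ as remainders: one solves $\Pi^{\cI^{(i)}}[F,\tD(\lambda)+Z+N]=-X_i$, $i=0,1$. Thanks to Lemma \ref{lem:projections} this system is triangular (equations \eqref{0 homo}--\eqref{1 homo}), and it can be solved \emph{exactly}, not perturbatively, because the operator $A_i^{-1}B_i$ with $A_i=\Pi^{\cI^{(i)}}L_\lambda\Pi^{\cI^{(i)}}$, $B_i=\Pi^{\cI^{(i)}}L_Z\Pi^{\cI^{(i)}}$ is nilpotent of order two (the structural computation using $Z_1=\sum Z^{(k)}_{Q+e_k}x^Qx_k\partial_{x_k}$ and $L_{V}x^Q$ landing in $\cI^{(2)}$), so that $(A+B)^{-1}=A^{-1}-A^{-1}BA^{-1}$ with controlled norm, as in Proposition \ref{homo prop}. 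With this choice the conjugated field has new bad part bounded by $\bnorm{X}^2_{s,r}$ times divisor losses (Lemma \ref{uuu}), i.e.\ genuinely quadratic, and the iteration closes. Your observations that $X^{\mathrm{bad}}$ is non-resonant by Lemma \ref{emme*} and that orders are preserved are correct and are also used in the paper, but without the modified homological equation (or some equivalent device producing quadratic error decay) the proposed iteration does not converge.
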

As a consequence, in the new coordinate system, the vector field $\phi_*W$ is not only tangent to $\Sigma$, the common zero set of the $X^Q$'s, $Q\in \cI^{(1)}$ , but also its restriction to it is linear and equal to $\tD_{\lambda|\Sigma}$, that is its flow is linear, with characteristic exponents $\lambda_j$. 
\begin{remark}
		As $\Sigma$ is defined by the vanishing of monomials of bounded degree, it is at most a countable intersection of union of coordinates hyperplanes. It is a union of "irreductible" components $\Sigma=\cup_i \Sigma_i$ passing through the origin.
		As  noted in Remark \ref{rem-wp} the vector field $\tD(\lambda)$ and $W$ might not define a well-posed flow even locally. Nevertheless, each irreducible component $\Sigma_i$ of $\Sigma$ can be decomposed as $\Sigma_i^s\cup\Sigma_i^u\cup \Sigma_i^c$, where the eigenvalues of $\tD(\lambda)$ restricted to  $\Sigma_i^s$ (resp. $\Sigma_i^u$, resp. $\Sigma_i^c$) have negative (resp. positive, resp. zero) real part. The restriction of $\tD(\lambda)$ to each of these sub-components give rise to a system whose dynamics is well defined for positive times on $\Sigma_i^s$ (resp. negative time for $\Sigma_i^u$ and for real time on $\Sigma_i^c$). Hence, by pulling-back each of these by the (same) analytic diffeomorphism, the phase space contains germs of at most countable analytic submanifolds passing through the origin, invariant by the dynamical system, the restrictions to which are simutaneously linearizable and whose flow is well defined either for positive or negative or for all real time.
\end{remark}
Let us now consider a  {\it Momentum preserving} (see Defintion \ref{cv}) vector field of the form $W^{(0)}= \tD(\lambda)+ X^{(0)}$, where $X^{(0)}$ has degree $\ge 1$.
 As explained in the Introduction, the notion of formal normal form with respect to D($\lambda$) is well defined and can be achieved by formal change of variables tangent to identity (see \cite{stolo-procesi}[Section 2] in the Hamiltonian setting). Moreover there exists an analytic change of coordinates that ensures that $W^{(0)}$ is conjugated to the form
\begin{equation}\label{dopoNF}
	  W = \tD(\lambda) + W^{Ker}_{\le \tM^*-1} + W_{\ge \tM^*}.
\end{equation}

If $W^{Ker}_{\le \tM^*-1}\in\cK^{diag}$ then we may apply our main Theorem thus obtaining a linearization result. Of course, if the subsets $\cM_\lambda$ and $\Delta_\lambda$ coincide, then the hypothesis is automatically met.
\subsection{Examples and Applications}
Before dealing with {infinite dimensional applications}, let us consider an example in dimension $6$ to illustrate some of the hypotheses. 
Eventhough our result is taylored for infinite dimension, it can be straightforwardly reformulated in the finite dimensional frame. In this case one does not need the momentum conservation and one can set $\fm=0, \mathtt{s} = s' = 0$. Note that in finite dimension the usual Diophantine condition is equivalent to Definition \ref{def-dioph} and could be used equivalently.

\begin{exam}
	As an example, let $\zeta_1,\zeta_2\ne 0$ be uncommensurable irrational numbers, that is $\zeta_1/\zeta_2\notin\Q$
	 and set $\lambda=(2,1,\zeta_1,-\zeta_1,\zeta_2,-\zeta_2)$. 
	
	 Let us consider a dynamical system in dimension 6 given by a nonlinear perturbation of the linear vector field $$\tD(\lambda):=2x_1\partial_{x_1}+x_2\partial_{x_2}+\zeta_1\left(x_3\partial_{x_3}-x_4\partial_{x_4}\right)+\zeta_2\left(x_5\partial_{x_5}-x_6\partial_{x_6}\right).$$

	 One directly verifies that $\cM_\lambda$ is generated by $Q_1= \be_3 +\be_4$ and $Q_2= \be_5 +\be_6$ while $\Delta_\lambda\setminus\cM_\lambda$ is obtained from  $\cM_\lambda$ by tralsation with $P_1= 2\be_2 -\be_1$ so that  $\tM_*=4$.
	 Hence, the constants of motions are generated by the monomials $x_3x_4, x_5x_6$, $\Sigma =\{x_3x_4=0\}\cap \{x_5x_6=0\} $ and formal resonant vector fields are of the form
	 
	 \begin{align*}\tD(\lambda)+cx_2^2\partial_{x_1}+ f_1(x_3x_4,x_5x_6)x_2^2\partial_{x_1}+f_2(x_3x_4,x_5x_6)x_2\partial_{x_2}+f_3(x_3x_4,x_5x_6)x_3\partial_{x_3}\\
		+ f_4(x_3x_4,x_5x_6)x_4\partial_{x_4}+ f_5(x_3x_4,x_5x_6)x_5\partial_{x_5}+ f_6(x_3x_4,x_5x_6)x_6\partial_{x_6},
	\end{align*} where $c$ is a constant and the $f_i$'s are  formal power series of two variables, vanishing at the origin.
	As mentioned in the introduction,  if $c=0$, then the set $\Sigma$ is invariant by the vector field above and its restriction reduces to the linear vector field $2x_1\partial_{x_1}+x_2\partial_{x_2}+{(-1)^{i'}\zeta_1}x_i\partial_{x_i}+{(-1)^{j'}\zeta_2}x_j\partial_{x_j}$ on $\{x_{i'}=0\}\cap \{x_{j'}=0\}$, $i,i'\in\{3,4\}, j,j'\in\{5,6\}$, $i\neq i'$,$j\neq j'$. 
	
	Let us now consider the analytic vector field  $W^{(0)} = \tD(\lambda)+ X^{(0)}$ with $X^{(0)}$ of degree at least $1$.
	Let us show that for many choices of $\zeta_1,\zeta_2$ the vector $\lambda$ satisfies the Diophantine condition \ref{def-dioph}.
	To this purpose let $\omega=(1,\zeta_1,\zeta_2)$. It is well known that, for $\gamma$ small enough and $\tau>3$,  many choices of $\omega$ satisfies the usual Diophantine condition
	$$ 
	|\omega\cdot \ell| \geq \frac{\gamma}{|\ell|^\tau} \quad\quad \forall \ell\in\Z^3\setminus \{0\}.
	$$
	
	Let us now consider the subset $\Z^6_\star$ of those  $p\in\Z^6\setminus\Delta_\lambda$ such that there exists $k\in \{1,\ldots,6\}$ for which $p + e_k \in\N^6$. By definition one has 
	$$
	|\lambda\cdot p| = |2p_1 + p_2 + \zeta_1(p_3 - p_4) + \zeta_2(p_5 - p_6)| \,.$$
	We note that $p\in\Z^6_\star$  implies that 
	$\ell := (2p_1 + p_2,p_3 - p_4,p_5 - p_6)\neq 0$. Noting that $|\ell|\le 2|p|$ we have
	\[
	{|\omega\cdot \ell|=}|\lambda\cdot p| \ge \frac{\gamma}{2^\tau |p|^\tau} \ge \mbox{const}\, \gamma \prod_{i=1}^6 (1+i^2 p_i^2)^{- 2\tau}\,,
	\] thus verifying the Diophantine condition \ref{def-dioph}.
	  
	  \smallskip
Performing three steps of BNF on  $W^{(0)}$  we push it forward to  
	\[
W :=	\phi^{BNF}_* W^{(0)} = \tD(\lambda) + (c_1 +c_2x_3 x_4 + c_3 x_5x_6) x_2^2 \partial_{x_1} + W^{Ker,diag}_{\leq 3} + W_{\geq 4}, 
	\]
	where $\phi^{BNF}$ is a close to identity analytic change of variables defined in an appropriate ball. 
	
	If $c_1 = c_2 = c_3 = 0$, then our result applies and $W$ is conjugated to \eqref{coniugato} through $\phi$. As a consequence the manifold $(\phi\circ\phi^{BNF})^{-1}\Sigma$ is invariant by the flow of $W^{(0)}$ in a ball close to $0$, and carries the linear dynamics $\tD(\lambda)$.
\end{exam}
{\begin{exam}\label{ex: 2}
		Consider the following PDE system on the circle $\vartheta\in \T:=\R/2\pi\Z$: 
		\begin{equation}\label{Zac}
			\begin{cases}
				&\im z_t = z_{\vartheta\vartheta} - V\star z + (z w)^p z\\
				&-\im w_t = w_{\vartheta\vartheta} - V{\,\bar\star\,} w + (z w)^p w
			\end{cases}
		\end{equation}
	with $p\in \N$, $V= \sum_j V_j e^{\im j \vartheta}$ with $(V_j)_{j\in \Z}\in \ell_{\infty}(\Z,\R)$ and 
	\[
	(V\star z )(\vartheta) =  \sum_{j\in \Z} V_j z_j e^{\im j \vartheta}\,,\quad  (V{\,\bar\star\,} w )(\vartheta) =  \sum_{j\in \Z} V_{-j} w_j e^{\im j \vartheta}\,.
	\]
	
	Note that on the invariant subspace $w=\bar z$, the system \ref{Zac} coincides with the NLS equation of degree $2p+1$.
	
	Passing to the Fourier basis we obtain the system of equations
		\begin{equation}\label{Zac2}
			\begin{cases}
						\dot z_j & = \im (j^2+V_{j})z_j+ \im(z^{p+1}w^p)_j\\
					\dot w_j & =  -\im (j^2+V_{-j})w_j -\im (z^pw^{p+1})_j.
			\end{cases}
				\end{equation}
				where $(f g)_j:= \sum_{j_1\in \Z} f_{j_1}g_{j-j_1}$.
		In order to fit our notation we set $I=\Z\times \{\pm\}$, $x=(x_k)_{k\in I}$ with
		\[
		x_{j,+}= z_j \,,\quad x_{j,-} =w_{-j}
		\]		moreover we define $\lambda_{j,\s}= \im \s  (j^2+V_j)$. With this notation the PDE is  rewritten as the momentum preserving dynamical system with vector field $W^{(0)}= \tD(\lambda) +X$ where
	\[
		X =\im \sum_{(j,\s)\in I} \s ( \sum_{\sum_{i=1}^{p+1} j_i - \sum_{i=1}^{p} h_i =j} \prod_{i=1}^{p+1} x_{j_i,\s} \prod_{i=1}^p x_{h_i,-\s} )\frac{\partial}{\partial x_{j,\s}}\,.
	\]
	We claim that $X\in \cV_{\mathtt s,\mathtt R}$,  see Definition \ref{cv}, for all $\mathtt s\ge 0$ and $\mathtt R>0$. Let us start by showing that $X$ 
	satisfies   the momentum conservation condition. Let us write
	$q=(q_k)_{k\in I}$ as $q=(q_+,q_-)$ with $q_{\s}=(q_{j,\s})_{j\in \Z}$ then,  using the notations \eqref{notation0} we have that 
	\[
	X^{(j,\s)}_q= \begin{cases}
		\binom{p+1}{q_\s}  \binom{p}{q_{-\s}}  & \mbox{if}\;  |q_{\s}|=|q_{-\s}|+1=p+1 \; \mbox{and} \; \sum_{h\in \Z} h (q_{h,\s}- q_{h,-\s} )=j\\
		0 & \mbox{otherwise}\,.
	\end{cases} 
	\]
where  the condition $\sum_{h\in \Z} h (q_{h,\s}- q_{h,-\s} )=j$  is just $ \fm \cdot (q- e_{\s,j})=0$. It remains to show that $X$ is a bounded map on any ball $B_{\mathtt R}(\tg_{\mathtt s})$. Let us introduce some notation: given $f,g\in \tg_{\mathtt s}$ set 
	$$
(	f\times g)_{j,\s}:=  \sum_{j_1\in \Z} f_{j_1,\s}g_{j-j_1,\s} \,,\quad 	(f\bar\times g)_{j,\s}:=  \s \sum_{j_1\in \Z} f_{j_1,\s}g_{j_1-j,-\s} 
	$$
	so that 
	$$ X= \im (\underbrace{x\times \dots\times x}_{p+1 \mbox{ times}} )\bar\times (\underbrace{x\times \dots\times x}_{p \mbox{ times}} )\,
	$$
	{meaning, that
	$$
	X_{j,\sigma}= \im \left\{(\underbrace{x\times \dots\times x}_{p+1 \mbox{ times}} )\bar\times (\underbrace{x\times \dots\times x}_{p \mbox{ times}} )\right\}_{j,\sigma},\quad X=\sum_{(j,\sigma)\in I}X_{(j,\sigma)}\frac{\partial}{\partial x_{j,\sigma}}.
	$$}
	Then $X$ is bounded as a map $B_{\mathtt R}(\tg_{\mathtt s})\to \tg_{\mathtt s}$ because $\times$ and $\bar\times$ are continuous bilinear maps with values in $\tg_{\mathtt s}$. For a proof see for instance \cite[Lemma 5.5]{BMP:2019}.
\\
By construction $\lambda_{j,+}=-\lambda_{j,-}$ moreover, setting $\omega_j =  \lambda_{j,+}= j^2+V_j$, Bourgain proved that for a positive measure set of $V\in B_{1/2}(\ell_{\infty})$  $\omega$ is $(\gamma,\tau)$-diophantine provided that $\g$ is small and $\tau> 1$, namely one has
\[
|\omega\cdot \ell|> \g \prod_{j\in \Z} \frac{1}{(1+\ell_j^2 \jap{j}^2)^\tau} \,,\forall \ell \in \Z^\Z_{\tf}\setminus\{0\}\,.
\]
Thus setting
\[
\Delta_\lambda :=\{p\in \Z^I_{\tf}: \quad p_+= p_{-}\,,\;\mbox{and there exists }\;  k\in I:\; p+e_k\in \N^I_\tf\}
\]
we have that for all $p\in \Z^I_{\tf}\setminus \Delta_{\lambda}$ such that $\exists k\in I:\; p+e_k\in \N^I_\tf$ one has $p_+\ne p_-$ and
\[
|\lambda \cdot p| =|\omega \cdot (p_+-p_-)| > \g \prod_{j\in \Z} \frac{1}{(1+((p_{j,+}-p_{j,-})^2 \jap{j}^2)^\tau} > \g \prod_{(j,\s)\in I} \frac{1}{(1+p_{j,\s}^2 \jap{j}^2)^\tau}\,,
\] 
which implies that $\lambda$ is  $(\gamma,\tau)$-diophantine modulo $\Delta_{\lambda}$.

\smallskip
Moreover if $q\in \N^I_\tf$, $k=(j,\s)$ and $q-\be_k\in \Delta_{\lambda}$ then $ (q-\be_k)_{(j,\s)}= (q-\be_k)_{(j,-\s)} = q_{(j,-\s)}\in \N$ and one must have $q_k\ne 0$. This means that there are no non-diagonal resonant vector fields and  $\Delta_\lambda\equiv \cM_\lambda$. The generators of $\cM_\lambda$  are  indexed by $\cN=\Z$ 
$$
Q_j = e_{j,+}+ e_{j,-}  \quad \stackrel{\mbox{constants of motion}}{\Longrightarrow} \quad h_j= x_{j,+}x_{j,-}
$$
 so that $\tM= 2$ and Assumption \ref{ass bounded degree} is satisfied. By construction Assumption \ref{assumption2} is satisfied with $\alpha=2$ and $\varphi_{(j,\s)}= \s \pi/2$ so that  $C=2$.
 Now, following Lemma \ref{emme*}, we fix $\tM_*= 4$, $\mathtt s\ge 0$ and $\mathtt R>0$ sufficiently small. If $p\ge 2$, then $\tD(\lambda)+X$ satisfies all the hypotheses of our main Theorem (with $Z=0$). Otherwise, if $p=1$, we  perform  1 step of Birkhoff Normal Form, following for instance {\cite{BMP:2019}} essentially verbatim  since this is in fact a complex Hamiltonian PDE system. We obtain a vector field of the form \ref{dopoNF} to which we apply our main result. 
 \\
We have thus proved that there exists a close to identity change of variables $\phi$, defined in a neighborhood $B_{r'}(\tg_{s'})$ of the origin, such that setting
\begin{equation}\label{ex:sigma}
\Sigma:= \{x\in \tg_{s'}: \quad x_{j,+}x_{j,-}=0 \,,\;\forall j\}
\end{equation}
one has that $\phi_* W$ is tangent to $\Sigma$ and its flow, restricted to $\Sigma$ is the linear flow of $\tD(\lambda)$. As a consequence the following holds. 
Consider  any partition of
$\Z= S_+\cup S_-$ into two disjoint sets and  any $\xi\in B_{r'}(\tg_{s'})$ such that
    $\xi_{j,+}=0$   if $j\in S_-$ and  $\xi_{j,-}=0$   if $j\in S_+$.
    Let 
    \begin{equation}\label{ellitico}
  x_{\rm lin}(\xi,t):= (\xi_{j,\s} e^{\im \sigma(j^2+V_j) t})_{(j,\s)\in I}
  \end{equation}
    then for all $\xi\in B_{r'}(\tg_{s'})$ one has that  $\phi^{-1}(x_{\rm lin}(\xi,t))$ is an almost periodic solution of \ref{Zac2} with frequency $\omega=(j^2+V_j)_{j\in \Z}$. 
    \end{exam} }
It is worthwile to notice that when we restrict to the real subspace $z=\bar w$, where Equation \ref{Zac} is the usual NLS equation, our result trivializes, since  the intersection between $\Sigma$ and the real subspace is just $z=w=0$. In order to have a non-trivial example for real Hamiltonian systems we have to consider the neighborhood of an hyperbolic fixed point.
\begin{exam}\label{ex:hyperbolic}
	Consider a toy-model system defined on $\tg_{\mathtt s}(I,\R)$ with $I= \Z\times\{\pm\}$, sat $x= (x_{j,+},x_{j,-})_{j\in \Z}$. Consider the real Darboux symplectic form $\Omega= \sum_j d x_{j,+}\wedge d x_{j,-}$ and  the Hamiltonian
	\[
H = 	\sum_{j\in \Z} (j^2+V_j)x_{j,+}x_{j,-} + F(x)
	\]
	with $V\in \ell_\infty(\Z,\R)$ as in example \ref{ex: 2}  while $F(x)$ is a real-analytic function  on a ball $B_\mathtt R (\tg_{\mathtt s}(I,\R))$ with a zero of order at least three in $x=0$ and satisfying the momentum invariance
	$	F(T_\fm x) = F(x)$, see \eqref{azione invariante}. 
	Under the same non-resonant hypotesis on the frequencies $j^2 + V_j$ as in the previous example, we can proceed the same way and prove that there exists a close to identity change of variables defined in an appropriate neighborhood $B_{r'}(\tg_{s'})$, so that in these variables $\Sigma$ defined in \eqref{ex:sigma} is invariant. It must be noted that, for general initial data, the Hamiltonian flow of $H$ is not even locally well posed. Our result proves the existence of stable/unstable manifolds on which the dynamics is well posed either for positive or negative times. More in general, in terms of flows one can reason as follows.\\
	Consider  any partition of
	$\Z= S_+\cup S_-$ into two disjoint sets such that one of them, let's say $S_+$ is finite. Consider  any $\xi\in B_{r'}(\tg_{s'})$ such that
	$\xi_{j,+}=0$   if $j\in S_-$ and  $\xi_{j,-}=0$   if $j\in S_+$.
	Let \begin{equation}\label{iperbolico}
		x_{\rm lin}(\xi,t):= (\xi_{j,\s} e^{\sigma(j^2+V_j) t})_{(j,\s)\in I}
	\end{equation}
	then for all $\xi\in B_{r'/2}(\tg_{s'})$ one has that 
	$\phi^{-1}(x_{\rm lin}(\xi,t))$ is a solution belonging to $B_{r'}(\tg_{s'})$ at least for small positive times. Of course if $S_+$ is empty, then the above holds for all positive times.
\end{exam}

\begin{exam}\label{ex:mix}
	One can mesh examples \ref{ex: 2}-\ref{ex:hyperbolic}. Let us consider a partition of $\Z = S_{\text{ell}} \cup S_{\text{hyp}}$ and the Hamiltonian 
	$$H = 	\sum_{j\in \Z} (j^2+V_j)x_{j,+}x_{j,-} + F(x) $$
	but now with the symplectic form 
	$$\Omega= \im\sum_{j\in S_{\text{ell}}} d x_{j,+}\wedge d x_{j,-} + \sum_{j\in S_{\text{hyp}}} d x_{j,+}\wedge d x_{j,-}\,. $$ 
	Note that if  $S_{hyp}$ (resp. $S_{ell}$) is empty, we fall in example \ref{ex: 2} (resp. example \ref{ex:hyperbolic}). Of course, if hyperbolicity and ellipticity coexist, the solutions starting on $\Sigma$ and supported on $S_{\text{ell}}$ behave like in \eqref{ellitico} and are almost periodic. Otherwise, solutions starting on $\Sigma$ with support intersecting $S_{\text{hyp}}$ are well defined for small positive (resp. negative) times provided that all but a finite number of hyperbolic eigenvalues $j^2 + V_j$, with $j$ in such support, have the same sign. 
	
	Note however that in the case of a real system, one has $x_{j,+} = \bar{x}_{j,-}$ for all $j\in S_{\text{ell}}$ the manifold $\Sigma$ in the elliptic directions reduces to a point and the only nontrivial dynamics that survive is the hyperbolic one.
	\end{exam}

\section{Proof of the main result}

Let $\lambda\in \C^I$   be $(\g,\tau)$-Diophantine modulo $\Delta_\lambda$ and  satisfy Assumptions \ref{ass bounded degree}, \ref{assumption2}; for simplicity take $\al=2$. In dealing with a general $\alpha>1$ the only difference is the bound \eqref{stima homo 1}, where the exponent $6$ becomes more complicated. The proof would be essentially identical.

\subsection{Homological equation and Technical Lemmata}
In what follows, we omit the dependence on $r,s$ if the context permits.  Our goal is to prove the following

\begin{prop}[Straightening the dynamics]\label{homo prop}
	For any  $Z\in \cK^{\diag}\cap \cV_{s,r}^{\ge 1}$ ,   for any  $Y_i\in \cI^{(i)}\cap \cV^{\ge \tM_*}_{s,r} \,, i = 0, 1$  the equation
	\begin{equation}
		\label{homo estesa}
		\Pi^{\cI^{(i)}} [F , {\tD(\lambda)} + Z] = Y_i
	\end{equation}
	admits a unique solution $F_i \in \cI^{(i)}\cap \cV^{\ge \tM_*}_{s+\sigma,r - \rho}\, \forall \quad 0 < \rho < r\, , \s > 0$ satisfying the bound
	\begin{equation}
		\label{stima homo est}
		\|F_i\|_{s+\s,r - \rho} \lesssim \frac{r}{\rho} \g^{-1}{e^{2^7\tc/\s^6}} (1 + \g^{-1}\|Z\|_{s,r}) \|Y_i\|_{s,r}\,.
	\end{equation}
\end{prop}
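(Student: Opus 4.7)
The plan is to reformulate \eqref{homo estesa} as a Neumann-type fixed-point problem and solve it by iteration. Writing $[F,\tD(\lambda)+Z]=-L_\lambda F + [F,Z]$ and using that the diagonal action \eqref{azione diag funzione} shows $L_\lambda$ preserves each ideal $\cI^{(i)}$, equation \eqref{homo estesa} becomes
$$L_\lambda F \;=\; \Pi^{\cI^{(i)}}[F,Z]\;-\;Y_i.$$
By Lemma \ref{emme*}, for $i=0,1$ one has $\cI^{(i)}\cap \cK\cap \cV^{\ge\tM_*}=\{0\}$, so I look for $F$ in $\cI^{(i)}\cap\cR\cap \cV^{\ge\tM_*}$, where $L_\lambda$ acts by the nonzero scalars $\lambda\cdot(q-e_k)$ on monomials $x^q\partial_{x_k}$ and is therefore invertible term by term.

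The technical heart of the proof is to prove a quantitative bound of the form
$$\|L_\lambda^{-1}V\|_{s+\sigma,r}\;\le\;C\,\gamma^{-1} e^{2^7 \tc /\sigma^6}\,\|V\|_{s,r}\qquad V\in\cR\cap\cI^{(i)}\cap\cV^{\ge\tM_*}_{s,r}.$$
The candidate inverse is $L_\lambda^{-1}(x^q\partial_{x_k})=(\lambda\cdot(q-e_k))^{-1}x^q\partial_{x_k}$, and Definition \ref{def-dioph} yields the lower bound $|\lambda\cdot(q-e_k)|\ge\gamma\prod_i(1+(q-e_k)_i^2\jap{i}^2)^{-1}$. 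The main obstacle will be to absorb the resulting growth $\prod_i(1+p_i^2\jap{i}^2)$ in the inverted divisor into the increment $s\mapsto s+\sigma$ of the exponential weight $e^{s\sqrt{\jap{k}}}$ defining $\tg_s$. Splitting indices according to whether $\jap{i}$ lies below or above a threshold of order $\sigma^{-6}$, exploiting the finite support of $p=q-e_k$ and the momentum restriction of Remark \ref{rem momentum} to control the number of active modes, one obtains precisely the $e^{C/\sigma^6}$ loss. The exponent $6$ (here simplified by the choice $\alpha=2$) is dictated by the balance between the $\sqrt{\jap{k}}$ appearing in the weight and the $\jap{i}^2$ produced by the Diophantine product, and this is where essentially all the technical work is concentrated.

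Granted the inversion estimate, set $F^{(0)}:=-L_\lambda^{-1}Y_i$ and define recursively $F^{(n+1)}:=L_\lambda^{-1}\Pi^{\cI^{(i)}}[F^{(n)},Z]$. Since $Z\in\cV^{\ge 1}$, each commutator with $Z$ raises the degree by at least one, so $F^{(n)}\in\cV^{\ge\tM_*+n}$. Combining the commutator estimate \eqref{commXHK} of Proposition \ref{fan}, the scaling $\|F^{(n)}\|_{s,r'}\le (r'/r)^{\tM_*+n}\|F^{(n)}\|_{s,r}$ from Lemma \ref{monotone}, and a geometric choice of radii $\rho_n\sim\rho\,2^{-n}$ and losses $\sigma_n\sim\sigma\,2^{-n}$, the series $F=\sum_{n\ge 0}F^{(n)}$ converges absolutely in $\cI^{(i)}\cap\cV_{s+\sigma,r-\rho}$ and satisfies \eqref{stima homo est}, the factor $(1+\gamma^{-1}\|Z\|_{s,r})$ being the standard bound $\sum_{n\ge 0}x^n\le 1+2x$ for $x$ in the regime of smallness implicit in the estimates. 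Uniqueness follows by the same reasoning applied to the difference $G$ of two solutions in $\cI^{(i)}\cap\cR\cap\cV^{\ge\tM_*}$: since $L_\lambda G=\Pi^{\cI^{(i)}}[G,Z]$, repeated application of $L_\lambda^{-1}$ forces $G\in\cV^{\ge\tM_*+n}$ for every $n$, hence $G\equiv 0$.
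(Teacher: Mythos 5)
Your reduction of \eqref{homo estesa} to $L_\lambda F = \Pi^{\cI^{(i)}}[F,Z]-Y_i$ and your use of the invertibility of $L_\lambda$ on $\cI^{(i)}\cap\cV^{\ge\tM_*}$ (via \eqref{condiz range} and the Diophantine bound, i.e.\ Proposition \ref{adjoint action}) match the paper's setup, and your uniqueness argument is fine. The genuine gap is in the solvability step: your infinite Neumann iteration $F^{(n+1)}=L_\lambda^{-1}\Pi^{\cI^{(i)}}[F^{(n)},Z]$ cannot be summed with the losses you propose. Every application of $L_\lambda^{-1}$ costs a factor $e^{\tc/\delta^6}$ for a positive loss $\delta$ in the analyticity parameter $s$; if you perform infinitely many inversions within a total budget $\sigma$, the losses $\delta_n$ must be summable, hence $\delta_n\lesssim n^{-1}$ along a subsequence and the costs grow at least like $e^{c n^6}$ (with your choice $\sigma_n\sim\sigma 2^{-n}$ they grow doubly exponentially, $e^{\tc 2^{6n}/\sigma^6}$). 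The only compensating gain you invoke — the degree increase $F^{(n)}\in\cV^{\ge\tM_*+n}$ converted into smallness by shrinking the radius from $r$ to $r-\rho$ — is merely geometric, of ratio $((r-\rho)/r)^n$, and cannot absorb a super-exponential loss. Relatedly, your final bound's factor $(1+\g^{-1}\|Z\|_{s,r})$ is justified by ``$\sum x^n\le 1+2x$ in a smallness regime'', but Proposition \ref{homo prop} assumes no smallness of $Z$ whatsoever (and none is available in the KAM scheme, where only $X$ is small); a convergent geometric series would both require such an unstated hypothesis and produce a bound of the form $(1-C\g^{-1}e^{2^6\tc/\s^6}(r/\rho)\|Z\|)^{-1}$ rather than the stated linear one.

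The missing idea is structural, not quantitative: because $Z\in\cK^{\rm diag}$, its part $Z_1$ outside $\cI^{(2)}$ is spanned by $x^{Q}x_k\partial_{x_k}$ with $Q\in\cG_\lambda$ (formula \eqref{bad}), and the paper shows that the operator $A_i^{-1}B_i$, with $A_i=\Pi^{\cI^{(i)}}L_\lambda\Pi^{\cI^{(i)}}$ and $B_i=\Pi^{\cI^{(i)}}L_Z\Pi^{\cI^{(i)}}$, is \emph{nilpotent of order two}: after one application, the image consists of diagonal fields whose further bracket with $Z_1$ produces monomials divisible by $x^{Q}x^{Q'}$, which lie in $\cI^{(2)}$ and are killed by $\Pi^{\cI^{(i)}}$. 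Hence $(A_i+B_i)^{-1}=A_i^{-1}-A_i^{-1}B_iA_i^{-1}$ exactly, only two applications of $L_\lambda^{-1}$ are ever needed (each with loss $\s/2$, whence the constant $e^{2^7\tc/\s^6}$), no convergence or smallness issue arises, and the bound comes out linear in $\g^{-1}\|Z\|_{s,r}$ as stated. Without this nilpotency (or some equivalent finite-termination mechanism) your scheme does not yield \eqref{stima homo est}.
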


This result follows from the definition of $\cI^{(i)}$ and from the statement below, regarding the invertibility of $L_\lambda$, which is proved in the Appendix.

 \begin{prop}[Homological equation] \label{adjoint action}
	Let $\lambda$ be as above.  For any $Y\in \cR_{s,r}$ the equation $L_\lambda X = Y$ admits a unique solution $X = L_\lambda^{-1} Y \in\cR_{s+\delta,r}$ for all $\delta > 0$ satisfying the bound
	\begin{equation}
		\label{stima homo 1}
		\|X\|_{s+\delta, r} \lesssim e^{\tc/\delta^6} \g^{-1} \|Y\|_{s,r}\,,
	\end{equation}
	for some positive $\tc$. 
\end{prop}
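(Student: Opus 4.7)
The plan is to exploit the fact that $L_\lambda$ acts diagonally on momentum-preserving monomials: $L_\lambda\bigl(x^q\partial_{x_k}\bigr) = (\lambda\cdot(q-e_k))\,x^q\partial_{x_k}$. So the formal solution is forced term by term: $X^{(k)}_q := Y^{(k)}_q/(\lambda\cdot(q-e_k))$. Since $Y\in\cR_{s,r}$, every nonzero coefficient $Y^{(k)}_q$ satisfies $q-e_k\notin\Delta_\lambda$, hence the divisor is nonzero, uniqueness is automatic, and the entire game reduces to a quantitative lower bound on $|\lambda\cdot p|$ with $p:=q-e_k$, subject to $\fm\cdot p=0$ and $p+e_k\in\N^I_\fin$, $p\notin\Delta_\lambda$.

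My second step would be a case split on the shape of $p$, using Assumption \ref{assumption2}, i.e.\ the decomposition $\lambda=\lambda^{(0)}+\tilde\lambda$ with $|\tilde\lambda_i|\le 1/2$ and $\lambda^{(0)}_i=\jap{i}^{2}e^{i\varphi_i}$. \emph{Case A (bulk):} the leading part $\sum_i p_i\jap{i}^{2}e^{i\varphi_i}$ cannot cancel within $\tfrac12\sum_i|p_i|$, yielding directly $|\lambda\cdot p|\gtrsim \max_i\jap{i}^{2}$. \emph{Case B (generic):} invoke Definition \ref{def-dioph} with $\tau=1$ to get
\[
|\lambda\cdot p|^{-1}\leq \gamma^{-1}\prod_{i\in\mathrm{supp}(p)}(1+p_i^2\jap{i}^{2}).
\]
\emph{Case C (symmetric pair):} $p$ is essentially supported on a conjugate pair $\{(j,\sigma),(-j,-\sigma)\}$ where the leading $\jap{j}^{2}$ terms nearly cancel; Assumption \ref{assumption2} provides $|e^{i\varphi_{(j,\sigma)}}-e^{i\varphi_{(-j,-\sigma)}}|\ge C$ whenever the $\lambda$'s differ, which is exactly what is needed to restore $|\lambda\cdot p|\gtrsim \jap{j}^{2}$.

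The third step is the absorption into the weight $\jap{k}e^{s\sqrt{\jap{k}}}$ in $\tg_s$: upon gaining $\delta$ in regularity, each coordinate $x_i^{q_i}$ effectively picks up a factor $e^{-\delta q_i\sqrt{\jap{i}}}$. The key elementary inequality is
\[
\log(1+a^2\jap{i}^{2})\le \delta\sqrt{\jap{i}}\,(1+a)+\tc_0\,\delta^{-b}
\]
for some universal $b$. The exponent $6$ in the final bound arises by composing this inequality over $\mathrm{supp}(p)$, pairing the Diophantine exponent $\tau$, the asymptotic exponent $\alpha=2$, and the square-root weight, and then using the momentum constraint $\fm\cdot p=0$ to link $\jap{k}$ to $\mathrm{supp}(p)$. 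Positivity and term-by-term monotonicity of the majorant norm then turn the pointwise divisor bound into the claimed $\|X\|_{s+\delta,r}\lesssim e^{\tc/\delta^{6}}\gamma^{-1}\|Y\|_{s,r}$.

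The main obstacle is Case C. In the generic and bulk regimes the polynomial factors from the small divisors match cleanly against the exponential weight. In the symmetric pair regime, however, the Diophantine condition by itself is blind to the fine cancellation $e^{i\varphi_{(j,\sigma)}}\simeq e^{i\varphi_{(-j,-\sigma)}}$ at leading asymptotic order, so the divisor could in principle be of order $|\tilde\lambda_{(j,\sigma)}-\tilde\lambda_{(-j,-\sigma)}|$ rather than $\jap{j}^{2}$. Bridging this gap uses the arg-separation bound from Assumption \ref{assumption2} in an essential way and requires a careful combinatorial tracking of how such conjugate pairs may occur in $p$ subject to momentum conservation; this is where I expect the proof to be technically most delicate.
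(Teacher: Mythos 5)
Your proposal follows essentially the same route as the paper's appendix proof: termwise inversion of the diagonal operator reduces the statement to a coefficientwise small-divisor estimate, which is then handled by the same trichotomy (divisor bounded below by a constant / Diophantine condition modulo $\Delta_\lambda$ / near-cancelling conjugate pair resolved via the phase-separation bound of Assumption \ref{assumption2}), with the polynomial losses absorbed into the gain $e^{-\delta(\sum_h\jap{h}^{1/2}q_h-\jap{k}^{1/2})}$, using momentum conservation to control the output weight $\jap{k}^{1/2}$, exactly as in the paper's Cases 0--3C. The differences are only presentational: the paper organizes the weight bookkeeping through the decreasing rearrangement $\na$ and delegates the elementary optimization (your log inequality) to \cite{stolo-procesi}, but the mechanism and the role of each hypothesis are the same.
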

\begin{proof}
	In the Appendix.
\end{proof}

Let us  now make some  remarks on the strucutre of the spaces $\cI^{(i)}$.
\begin{remark}\label{rems}
	\begin{enumerate}
		\item  $\cI^{(1)}\oplus \cI^{(2)}$ is the subspace of analytic vector fields that vanish on $\Sigma$.\\
\item The subspace  $\cI^{(0)}_{s,r}$ does not contain diagonal  resonant vector fields of degree $\geq 1$, that is $\cK^{\rm diag,\ge 1}_{s,r}\cap \cI^{(0)}_{s,r} = \emptyset$.\\
In fact, any monomial in $\cK^{\rm diag,\ge 1}$ is of the form $x^Px_k\partial_{x_k}$ with $0 \ne P \in\cM_\lambda$, so that $P = \sum_i n_i Q_i$ with at least one $n_i\neq 0$.
\item
Setting
$
	\tm:= \min(\|P\|_{\ell_1})_{P\in \Delta_\lambda}\geq 2\,
$, one has the  following inclusion
\[
 \cK_{s,r} \cap \cV_{s,r}^{< \tm -2} \subseteq \cK^{\rm diag}_{s,r}\,.
\]
Indeed, any monomial in $\cK^{out}_{s,r}$ is of the form $x^{P + e_k} \partial_{x_k}$ with $P_k = -1$. Thus its degree is $\|P + e_k\|_{\ell_1} - 1 = \|P\|_{\ell_1} - 2$. In other words, resonant terms which are not divisible by monomials $x^q$, $q\in\cM_{\lambda}$, are of order $\geq \tm - 2$.\\
\noindent
\item Recalling formula \eqref{pappa}, one has 
\begin{equation}\label{bad}
\cI^{(1)}_{s,r} \cap \cK_{s,r}^{\rm diag} = \overline{\mbox{Span}}_\C( x^Q x_k\frac{\partial}{\partial x_k})_{Q\in \cG_\lambda,k\in I}\,.
\end{equation} 
	\end{enumerate}
\end{remark}

\begin{lemma}\label{lem:projections}
\begin{enumerate}
\item The action of $L_\lambda$ preserves   the scaling degree, the subspaces $\ozio, \iacopo$ and $\dom $, and the diagonal vector fields.
\item For any $X,Y\in\iacopo$ we have $\prod^{\cI^0}[X,Y] = 0$.
\item For any $X\in\iacopo$ and $Y\in\dom$, then $[X,Y]\in\dom$.
\item For any $X\in\cV_{s,r}$ and $Y\in\dom$, then $\prod^{\cI^0}[X,Y] = 0$.
\end{enumerate}
\end{lemma}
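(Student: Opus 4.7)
The strategy is to reduce each statement to a combinatorial check on monomial vector fields, exploiting the explicit formula for the Lie bracket and the fact that $\cI^{(0)}, \cI^{(1)}, \cI^{(2)}$ are defined purely in terms of the exponent $q$ of a monomial $x^q\partial_{x_k}$: namely, such a monomial lies in $\cI^{(r)}$ iff $q\in\cJ^{(r)}$, i.e.\ iff $q$ is divisible (in the partial order of $\N^I$) by no, by exactly one, or by at least two of the generators $Q_i\in\cG_\lambda$.

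Claim (1) is immediate from \eqref{azione diag funzione}: $L_\lambda$ acts on each monomial vector field $x^q\partial_{x_k}$ by the scalar $\lambda\cdot(q-e_k)$, leaving the underlying monomial—and hence its degree, its membership in $\cJ^{(r)}$, and its diagonal character $q_k\geq 1$—unchanged.

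For (2)--(4), I would expand by bilinearity and use the bracket formula
\[
[x^p\partial_{x_k},\, x^q\partial_{x_m}] \;=\; q_k\, x^{p+q-e_k}\partial_{x_m} \;-\; p_m\, x^{p+q-e_m}\partial_{x_k},
\]
so that the task reduces to componentwise inequalities on $p+q-e_k$ (relevant only when $q_k\geq 1$) and on $p+q-e_m$ (relevant when $p_m\geq 1$), extended to general $X,Y$ by continuity in $\norma{\cdot}_{s,r}$. For \textbf{(2)}, if $p\geq Q_i$ and $q\geq Q_{i'}$, then $p_k\geq (Q_i)_k$ and $q_k\geq 1$ together give $p+q-e_k\geq Q_i$ componentwise, so $p+q-e_k\in\cJ^{(1)}$ and the bracket has no component in $\cI^{(0)}$. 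For \textbf{(4)}, $p$ is arbitrary but $q\geq Q_i+Q_j$; splitting on whether $(Q_j)_k\geq 1$ (use $Q_i$ as divisor) or $(Q_j)_k=0$ (use $Q_j$ as divisor, with $q_k\geq 1$ compensating the $-e_k$), one sees in either case that $p+q-e_k\in\cJ^{(1)}$.

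Claim \textbf{(3)} is the most delicate. Here $p\geq Q_i$, $q\geq Q_j+Q_l$, so $p+q\geq Q_i+Q_j+Q_l$, and I must exhibit a pair of generators dividing $p+q-e_k$. At indices $h\neq k$ this is immediate from $p+q\geq Q_i+Q_j+Q_l$. At $h=k$, the key identity is that the minimum pair-sum equals the triple-sum minus the largest single term,
\[
\min_{a\neq b}(Q_a+Q_b)_k \;=\; (Q_i+Q_j+Q_l)_k - \max\{(Q_i)_k,(Q_j)_k,(Q_l)_k\}.
\]
If some $(Q_a)_k\geq 1$, the pair obtained by excluding $Q_a$ divides $p+q-e_k$ also at position $k$; if all three generators vanish at $k$, the pair sum is $0$ there and $(p+q-e_k)_k\geq 0$ follows from $q_k\geq 1$. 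In both cases $p+q-e_k\in\cJ^{(2)}$, so $[X,Y]\in\cI^{(2)}$. The only conceptual obstacle in the whole argument is this edge case at $h=k$, where the single $-e_k$ subtraction could in principle spoil divisibility; the combinatorial identity on pair sums is precisely what rules this out.
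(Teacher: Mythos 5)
Your proposal is correct and takes essentially the same route as the paper: a monomial-by-monomial bookkeeping of divisibility by the generators $Q_i$ under the Lie bracket, with item (1) read off from the diagonal action \eqref{azione diag funzione} of $L_\lambda$. The paper organizes the same computation by factoring out the resonant monomials via $[x^AR_1,x^BR_2]=x^{A+B}[R_1,R_2]+x^AR_1(x^B)R_2-x^BR_2(x^A)R_1$ and the Leibniz rule, which keeps the factor $x^{Q_i}$ (or $x^{Q_i+Q_j}$) visible in every term and thus sidesteps your explicit componentwise analysis of the $-e_k$ subtraction; your handling of that edge case (and your use only of $p\ge Q_i$, $q\ge Q_j+Q_l$, so the loose phrase ``divisible by exactly one'' plays no role) is nevertheless correct.
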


\begin{proof}
	We recall that if $A,B \in \N_\fin^I$,  $R$, $R_i$ (e.g. monomial) vector fields, we have:
	\begin{eqnarray}
		[x^AR_1,x^BR_2] &= & x^{A+B}[R_1,R_2]+x^AR_1(x^B)R_2-x^BR_2(x^A)R_1\label{bravf}\\
		R(x^{A+B})&=&x^AR(x^B)+x^BR(x^A)\label{brafct}
	\end{eqnarray}
	Here,  
	$R(x^A)=\sum_{i\in I}R_i\frac{\partial x^A}{\partial x_i}$ 
	denotes the Lie derivative of $x^A$ along $R$.
\begin{enumerate}
	\item  It follows directly from \eqref{diago}. 
\item By definition, $X$ (resp. $Y$) is a sum of vector fields of the form $x^AR_1$ with $A\in \cJ^{(1)}\setminus \cJ^{(2)}$. According to (\ref{bravf}), $[X,Y]$ is a sum of vector fields of the form $x^C R_3$ with $C\in \cJ^{(1)}$.

\item If $B\in \cJ^{(1)}\setminus \cJ^{(2)}$ and $A\in \cJ^{(1)}\setminus \cJ^{(1)}\setminus  \cJ^{(2)}$ then, according to \ref{brafct}, $x^AR_1(x^B)\in \cJ^{(2)}$ for any vector field $R_1$ and obviously also for $x^{A+B}$ and $x^B$. 
\item If $B\in \cJ^{(1)}\setminus \cJ^{(2)}$ then, according to \ref{brafct}, $x^AR_1(x^B), X^AR_1(X^B), X^BR_2(x^A)\in \cJ^{(1)}$ for all multi-indices A with non-negative entry and all vector fields $R_1, R_2$.

\end{enumerate}
\end{proof}

We remark that the projection $\Pi^{\cI^{(0)}}$ on diagonal vector fields can be expressed as
\begin{equation}
\Pi^{\cI^{(0)}} \sum_{k\in I} \cY^{(k)}(x) x_k \base{x_k} = \sum_{q + e_k\in \cJ^{(0)}}\cY^{(k)}_qx^q x_k \base{x_k} =:   \sum_{k\in I} \cT_k^{(0)}[\cY^{(k)}(x)] x_k \base{x_k}\,,
\end{equation}
where $\cT_k^{(0)}[f] := \sum_{q + e_k\in\cJ^{(0)}} f_q x^q$.
%
\\


  \begin{remark}\label{mandrione}
  	If we consider a ``diagonal" vector field  $\sum_{k\in I} \cY^{(k)}(x) x_k \base{x_k}$ in the range $\cR$ of $L_\lambda$,  then 
  	\begin{equation}
  		\label{azione ad -1}
  		L_\lambda^{-1} Y = \sum_{k\in I}(L_\lambda^{-1}\cY^{(k)}(x)){x_k}\base{x_k}  \quad \text{where}\quad L_\lambda^{-1}f(x) = \sum_{\substack{q\in \N^I_\fin \\ \lambda\cdot q \neq 0}} \frac{f_q}{\lambda\cdot q} x^q \quad \forall f\in\cH_{s,r} \,.
  	\end{equation}
  \end{remark}

In order to prove Proposition \ref{homo estesa}, we need to show  the invertibilty of  the operators $\Pi^{\cI^{(i)}} L_{ \tD(\lambda) + Z} \Pi^{\cI^{(i)}}$.

To this purpose let us set, for $i=0,1$
\[
A_i= \Pi^{\cI^{(i)}} L_\lambda \Pi^{\cI^{(i)}}\,,\quad B_i= \Pi^{\cI^{(i)}} L_{ Z} \Pi^{\cI^{(i)}}.
\]

\begin{lemma}
	The operator $A^{-1}_iB_i: \mathcal{A}^{\ge \tM_*}_{s,r}\to \cI^{(i)} \cap  \mathcal{A}^{\ge \tM_*}_{s+\sigma, r - \rho} $ is nilpotent of order two.
\end{lemma}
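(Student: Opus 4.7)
The plan is first to reduce the claim to showing $B_i^2=0$, and then to prove the latter by analyzing the structure of $B_iX$ in terms of the generators $Q_\alpha$ of $\cM_\lambda$. Since $Z\in\cK=\ker L_\lambda$, the standard identity $[L_\lambda,L_Z]=L_{[\tD(\lambda),Z]}=0$ shows $L_\lambda$ and $L_Z$ commute on $\cV_{s,r}$. By Lemma \ref{lem:projections}(1), $L_\lambda$ preserves each $\cI^{(i)}$, so $A_i=L_\lambda|_{\cI^{(i)}}$ and a direct computation gives $A_iB_i=B_iA_i$ on $\cI^{(i)}$. Since $A_i$ is invertible on the indicated domain for $i=0,1$ (as $\cK\cap\cI^{(i)}\cap\cV^{\ge\tM_*}=\{0\}$ by Lemma \ref{emme*} combined with Proposition \ref{adjoint action}), $A_i^{-1}$ also commutes with $B_i$, whence $(A_i^{-1}B_i)^2=A_i^{-2}B_i^2$. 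The nilpotency claim therefore reduces to $B_i^2=0$.

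Next I would establish that $B_iX$ is always \emph{diagonal}. Writing $Z=\sum_m\cZ^{(m)}x_m\partial_{x_m}$ with each $\cZ^{(m)}$ a sum of monomials $x^Q$, $Q\in\cM_\lambda\setminus\{0\}$ (since $Z\in\cK^{\diag}\cap\cV^{\ge 1}$), a direct computation gives
\[
[Z,X]=\sum_k Z[X^{(k)}]\,\partial_{x_k}\;-\;\sum_k X[\cZ^{(k)}]\,x_k\partial_{x_k}\;-\;\sum_k\cZ^{(k)}X^{(k)}\,\partial_{x_k}.
\]
The first and third sums consist of monomials $c\,x^{A+Q}\partial_{x_k}$ with $A$ in the support of $X$ and $Q\in\cM_\lambda\setminus\{0\}$; hence their indices are divisible by at least one generator $Q_\alpha$, and by two whenever $A\in\cJ^{(1)}$. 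Thus these terms lie in $\cI^{(1)}\oplus\cI^{(2)}$ when $i=0$ and in $\cI^{(2)}$ when $i=1$, and so are annihilated by $\Pi^{\cI^{(i)}}$. Only the middle, manifestly diagonal term survives, so $B_iX=\sum_l\cY^{(l)}x_l\partial_{x_l}$.

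Setting $Y:=B_iX$, since both $Z$ and $Y$ are diagonal, a short computation yields
\[
[Z,Y]=\sum_k\bigl(Z[\cY^{(k)}]-Y[\cZ^{(k)}]\bigr)\,x_k\partial_{x_k}.
\]
Every monomial of $Z[\cY^{(k)}]$ carries a factor $x^Q$, $Q\in\cM_\lambda\setminus\{0\}$, inherited from $\cZ^{(m)}$; likewise every monomial of $Y[\cZ^{(k)}]=\sum_l\cY^{(l)}\,x_l\partial_{x_l}\cZ^{(k)}$ carries such a factor from $\cZ^{(k)}$. Consequently each vector-field monomial of $[Z,Y]$ has index divisible by at least one generator, i.e.\ $[Z,Y]\in\cI^{(1)}\oplus\cI^{(2)}$. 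This immediately yields $B_0^2X=\Pi^{\cI^{(0)}}[Z,Y]=0$.

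The main obstacle is the case $i=1$, where one needs the stronger statement $[Z,Y]\in\cI^{(2)}$. For this I would prove that each coefficient monomial $x^{A+Q-e_j}$ of $\cY^{(l)}$ arising in $B_1X$ is \emph{itself} divisible by some $x^{Q_\gamma}$. Indeed, write $A=Q_\gamma+A'$ (admissible since $A\in\cJ^{(1)}$) and $Q=Q_\delta+Q'$ with $A',Q'\in\N^I$. The hypothesis $A+Q-e_j+e_l\in\cJ^{(1)}\setminus\cJ^{(2)}$, tested against the pair $(Q_\gamma,Q_\delta)$, forces $l\ne j$ and $(A')_j=(Q')_j=0$; combined with $Q_j\ne 0$ (required for the coefficient not to vanish), this gives $(A+Q-e_j-Q_\gamma)_j=Q_j-1\ge 0$, so $A+Q-e_j-Q_\gamma\in\N^I$ as claimed. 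Combining this divisibility with the additional generator factor contributed by $Z$ in the commutator formula above, each coefficient monomial of $[Z,B_1X]$ is divisible by two generators, so every vector-field monomial lies in $\cJ^{(2)}$, i.e.\ $[Z,B_1X]\in\cI^{(2)}$, giving $B_1^2X=0$.
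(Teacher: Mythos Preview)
Your proof is correct and follows essentially the same structural idea as the paper---both hinge on the observation that $B_iX$ is diagonal, and that applying $B_i$ a second time to such a diagonal vector field lands in $\cI^{(i+1)}\oplus\cI^{(i+2)}$. The route differs in two minor respects worth noting.

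First, you reduce to $B_i^2=0$ by invoking $[L_\lambda,L_Z]=L_{[\tD(\lambda),Z]}=0$ and the fact that $L_\lambda$ preserves $\cI^{(i)}$, so that $A_i$ and $B_i$ commute and $(A_i^{-1}B_i)^2=A_i^{-2}B_i^2$. The paper instead computes $B_iA_i^{-1}B_i=0$ directly, using that $A_i^{-1}$ preserves diagonality (Remark~\ref{mandrione}); your commutation argument is a clean shortcut that bypasses $A_i^{-1}$ entirely. Second, the paper first splits $Z=Z_1+Z_2$ with $Z_j\in\cI^{(j)}$ and reduces $B_i$ to $\Pi^{\cI^{(i)}}\ad_{Z_1}\Pi^{\cI^{(i)}}$ (via Lemma~\ref{lem:projections}(3)--(4)), working only with coefficients $x^Q$, $Q\in\cG_\lambda$; you keep the full $Z$ with $Q\in\cM_\lambda\setminus\{0\}$, which works equally well since every such $Q$ contains a generator.

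One small remark on your $i=1$ step: the detour through the projection constraint $A+Q-e_j+e_l\in\cJ^{(1)}\setminus\cJ^{(2)}$ to force $A'_j=Q'_j=0$ and $l\ne j$ is unnecessary. Writing $A=Q_\gamma+A'$ with $A'\in\N^I$, one has directly $A+Q-e_j-Q_\gamma=A'+Q-e_j$, and the only coordinate to check is $j$, where $A'_j+Q_j-1\ge 0$ holds simply because $Q_j\ge 1$ (which you already note is required for the coefficient not to vanish). The conclusion $A+Q-e_j\in\cJ^{(1)}$ thus follows without invoking the projection. This is exactly the (implicit) mechanism behind the paper's terse ``$=0$'' in the $\cI^{(1)}$ case; your version has the merit of making it explicit.
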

\begin{proof}
First note that,  since $Z$ has order $\geq 1$,  then the operator $B_i$ maps vector fields of order $\geq \tM_*$ to vector fields of order $\geq \tM_*$ and that $A_i$ is invertible on the image of $B_i$ with $A^{-1}_i =  \Pi^{\cI^{(i)}} \ad^{-1}_{ \tD(\lambda)} \Pi^{\cI^{(i)}}$ according to relations \eqref{condiz range}. 

We recall that $Z\in \cK^{\rm diag}\cap \mathcal{A}_{s,r}^{\ge 1}$ implies that $Z\in \iacopo\oplus\dom$. Then we write $Z= Z_1+Z_2$ where 
$Z_j\in \cI^{(j)}$, for $j=1,2$ and note that $B_i= \Pi^{\cI^{(i)}} \ad_{Z_1} \Pi^{\cI^{(i)}}$ , by Lemma \ref{lem:projections} (4)-(3). Moreover by \eqref{bad} we may write
\[
Z_1= \sum_{k\in I, Q\in \cG_\lambda} Z_{Q+e_k}^{(k)} x^Q x_k \base{x_k}\,.
\]
	Let us first consider the case of $\cI^{(0)}$. 
W. l.o.g. we assume $U\in \cI^{(0)}\cap \mathcal{A}^{\ge \tM_*}_{s,r}$ . We start by noticing that
\begin{equation}\label{B azione}
\begin{aligned}
B_0 U &= \Pi^{\cI^{(0)}} \sum_{k\in I, Q\in \cG_\lambda} Z_{Q+e_k}^{(k)} [x^Q  x_k \frac{\partial}{\partial{x_k}}, U ]
\\
&= \Pi^{\cI^{(0)}}\sum_{k\in I, Q\in \cG_\lambda} Z_{Q+e_k}^{(k)} \Big(x^Q  x_k \sum_j\frac{{\partial U}^{(j)}}{\partial{x_k}}\base{x_j}  -(L_U x^Q  x_k) \frac{\partial}{\partial{x_k}}\Big)
\\
&= -\Pi^{\cI^{(0)}}\sum_{k\in I, Q\in \cG_\lambda} Z_{Q+e_k}^{(k)}  (L_U x^Q ) x_k \frac{\partial}{\partial{x_k}}\,.
\end{aligned}
\end{equation}
Note that the first summand in the parenthesis of the second line above has $0$ projection on $\cI^{(0)}$ because of the $x^Q$ factor. Similarly for $x^QL_U x_k$. \\
 Since $A_0 = \Pi^{\cI^{(0)}} \ad_{ \tD(\lambda)} \Pi^{\cI^{(0)}}$ is invertible and preserves degree, and recalling that the projections are idempotent, that is $P^2=P$, we have $A^{-1}_0\Pi^{\cI^{(0)}} = \Pi^{\cI^{(0)}} A^{-1}_0 = A^{-1}_0$.  Consequently, let $V_0:= A^{-1}_0B_0U$. By analyticity it admits the Taylor expansion $\sum_{k\in I, q\in\N^I_\fin} V^{(k)}_{0,q}x^q\base{k}$. Moreover, by construction

\begin{align*}
V_0 &= -\sum_{k\in I, Q\in \cG_\lambda} Z_{Q+e_k}^{(k)} A_0^{-1} \Pi^{\cI^{(0)}}(( L_U x^Q ) x_k \frac{\partial}{\partial{x_k}})\\
&=-\sum_{k\in I, Q\in \cG_\lambda} Z_{Q+e_k}^{(k)} \Pi^{\cI^{(0)}} L_\lambda^{-1}  ( (L_U x^Q ) x_k \frac{\partial}{\partial{x_k}})\,\\
& = \sum_{\substack{k\in I, p\in\N^{I}_\fin\\ p + e_k \in \cJ^{(0)}}} V^{(k)}_{0,p + e_k} x^p x_k\base{x_k}\,,
\end{align*}
where the last equality follows from the fact that ${L^{-1}_\lambda}$ preserves diagonal vector fields. 
Note that $V_0\in\cI^{(0)}$, hence from formula \eqref{B azione} it follows that

\begin{align*}
B_0 A^{-1}_0 B_0 U &= B_0 V = -\Pi^{\cI^{(0)}}\sum_{k\in I, Q\in \cG_\lambda} Z_{Q+e_k}^{(k)}  (L_{V_0} x^Q ) x_k \frac{\partial}{\partial{x_k}}
\\
& =  \Pi^{\cI^{(0)}}\sum_{k\in I, Q\in \cG_\lambda} Z_{Q+e_k}^{(k)}  \Big(\sum_{h\in I, p\in\N^{I}_\fin} V^{(h)}_{0,p + e_h} x^p x_h \frac{\partial x^Q }{\partial x_h} \Big)x_k \frac{\partial}{\partial{x_k}}\,\\
& =  \Pi^{\cI^{(0)}}\sum_{k\in I, Q\in \cG_\lambda} Z_{Q+e_k}^{(k)}  \Big(\sum_{h\in I, p\in\N^{I}_\fin} V^{(h)}_{0,p + e_h} x^{p } x^{Q} Q_h\Big)x_k \frac{\partial}{\partial{x_k}} = 0\,. 
\end{align*}

The case of $B_1= \Pi^{\cI^{(1)}} \ad_{ Z_1} \Pi^{\cI^{(1)}}$, follows almost verbatim from the discussion about $\cI^{(0)}$, by replacing accordingly $\Pi^{\cI^{(0)}} $ with $\Pi^{\cI^{(1)}}$ . In fact, consider formula \eqref{B azione} replacing $\Pi^{\cI^{(0)}}$  with $\Pi^{\cI^{(1)}}$ and note that, taken w.l.o.g $U\in \cI^{(1)}$, then for each monomial of $U$, we have $$x^Q x_k\frac{\partial}{\partial x_k} U^{(j)}_q x^q = x^Q  x^q q_k U^{(j)}_q\,\quad q \in \jacopo\setminus\jdom\,.$$
Recalling that $q\in \jacopo\setminus\jdom$ implies that $q= P + q'$ with $P\in \cG_\lambda$ and  $q'\in \N^I_\fin$, we have that   the first summand of the second line of \eqref{B azione} belongs to $\dom$. For the same reason $L_U(x^Qx_k)$ reduces to $(L_Ux^Q)x_k$. In conclusion we have
$$ 
B_1U = -\Pi^{\cI^{(1)}}\sum_{k\in I, Q\in \cG_\lambda} Z_{Q+e_k}^{(k)}  (L_U x^Q ) x_k \frac{\partial}{\partial{x_k}}\,.
$$
Then
 \[
V_1:= A^{-1}_1B_1U
= \sum_{k\in I, p + e_k\in\jacopo\setminus\jdom} V^{(k)}_{p + e_k} x^p x_k\base{x_k}\,, 
 \]
 and 
 consequently
 \[
 B_1 A^{-1}_1 B_1 U = \Pi^{\cI^{(1)}}\sum_{k\in I, Q\in \cG_\lambda} Z_{Q+e_k}^{(k)}  \Big(\sum_{h\in I, p+e_h \in \jacopo\setminus\jdom} V^{(h)}_{p + e_h} x^{p } x^{Q} Q_h\Big)x_k \frac{\partial}{\partial{x_k}} = 0\,. 
 \]
\end{proof}
The above lemma implies that $A+B$ is invertible on $\mathcal{A}^{\ge \tM_*}_{s,r} \cap (\ozio {\oplus} \iacopo)$ and
\begin{equation}\label{inversaaa}
(A+B)^{-1} =(\id + A^{-1}B)^{-1} A^{-1} = (\id - A^{-1}B ) A^{-1}= A^{-1} - A^{-1} B A^{-1}\,.
\end{equation}

We are now ready to prove Proposition \ref{homo prop}.

\begin{proof}[Proof of Proposition \ref{homo prop}]
Let us start with the case $\ozio$.  By identity \eqref{inversaaa},  we have that 
$$
F = A^{-1}Y - A^{-1}BA^{-1} Y\,.
$$
By Proposition \ref{adjoint action} and Proposition \ref{fan}, we have 
\begin{equation}
\begin{aligned}
\|F\|_{s + \sigma, r- \rho} &\lesssim \g^{-1} {e^{2^6\tc/\s^6}} ( \|Y\|_{s, r} + \|BA^{-1}Y\|_{s + \frac{\s}{2}, r - \rho} )\\
& \lesssim \g^{-1} {e^{2^6\tc/\s^6}} \left( \|Y\|_{s, r} + 4\left(1+\frac{r}{\rho}\right) \|Z\|_{s+\frac{\s}{2},r} \|A^{-1}Y\|_{s + \frac{\s}{2}, r}\right)\\
& \lesssim \g^{-1} {e^{2^6\tc/\s^6}} \left( \|Y\|_{s, r} + 4\left(1+\frac{r}{\rho}\right)\g^{-1} {e^{2^6\tc/\s^6}}  \|Z\|_{s+\frac{\s}{2},r} \|Y\|_{s , r}\right)\,.
\end{aligned}
\end{equation}
{As $\frac{r}{\rho}\geq  1$, then $4\left(1+\frac{r}{\rho}\right)e^{2^6\tc/\s^6}\leq 8\frac{r}{\rho}e^{2^6\tc/\s^6}$ we obtain
$$
\|F\|_{s + \sigma, r- \rho} \lesssim 8\g^{-1}e^{2^7\tc/\s^6}\frac{r}{\rho}\left( 1 + \g^{-1} \|Z\|_{s,r} \right)\|Y\|_{s, r}\,.
$$}
The bound follows. The case $\iacopo$ follows verbatim.
\end{proof}

\subsection{KAM algorithm}

The proof of Theorem \ref{main-thm} follows  directly from a KAM iteration which in turn is based on the repeated application of the following procedure. \\
In line with the decomposition of $\cV_{s,r}$ as a direct sum of the $\cI^{(i)}\, i=0,1,2$, in the following it will be convenient to use the following slightly different but equivalent norm, 

\begin{equation}	\label{norma split}
\bnorm{X}_{s,r} := \max_{0\le j \le 2} \{\|X_j\|_{s,r}\},
\end{equation}
where $X_j$ is the projection of $X$ on $\cI^{(j)}_{s,r}$, for $j = 0,1,2\,.$
\\

Since $\|X_j\|_{s,r} \le \|X\|_{s,r}$ (recall we are using majorant like norms) the norm defined in \eqref{norma split} satisfies
$$
\bnorm{X}_{s,r} \leq \norma{X}_{s,r} \leq 3\, \bnorm{X}_{s,r}\,.
$$

\noindent
\textbf{Main KAM step.} Let $W \in \cV_{\mathtt s,\mathtt R}^{\ge 0}$ 
be of the form  $$W = \tD(\lambda) + Z + X + N, $$ 
with $Z \in \cK^{diag}_{s,r}\cap \cV^{(1\le \td \le \tM_* - 1)}_{s,r} $, $X \in \cV_{ s,r}^{\ge \tM_*} \cap (\cI^{(0)} \cup \cI^{(1)}),\, N\in \cV_{s,r}^{\ge \tM_*} \cap \cI^{(2)}\,. $ We have the following

\begin{lemma}[Main step]\label{uuu}
Given $\g>0, \rho < \frac{r}{5}, \s>0$, assume that 
\begin{equation}\label{assunzione}
\pa{1 + \frac{\bnorm{Z}_{s,r}}{\g} +  \frac{\bnorm{N}_{s,r}}{\g}}^3\frac{\bnorm{X}_{s,r}}{\g} \le \tK_1  \frac{\rho^4 }{r^4} e^{-\frac{2^8\tc}{\s^6}}
\end{equation}
where $\tK_1$ is a pure positive constant. Then, there exists a generating vector field $$F\in \cV_{ s + 2\s,r - 3\rho}^{\ge \tM_*} \cap (\cI^{(0)} \cup \cI^{(1)})$$ satisfying 
\begin{equation}\label{x hamflow}
\bnorm{F}_{s + 2\s, r - 3\rho} \le \frac{\rho}{8 e (r - 3\rho)}\,,
\end{equation}
such that  for all $s_1\ge s+2\s$  the time $1$-flow 
$\Phi_F: B_{r-5\rho}(\tg_{s_1})\to
B_{r -3\rho}(\tg_{s_1})$   is well defined, analytic, symplectic with the bounds
\begin{equation}\label{pedicini}
\sup_{u\in  B_{r-5\rho}(\tg_{s_1})} 	\norm{\Phi^1_F(u)-u}_{s_1}
\le
(r+\rho)  \bnorm{F}_{s, r-3\rho }\,,
\end{equation}
and such that  
$$
W_+ := \exp(L_F) W = \tD(\lambda) + Z + X_+ + N_+\,,
$$
with 
$$
X_+ \in \cV_{ s + 2\s,r - 5\rho}^{\ge \tM_*} \cap (\cI^{(0)} \cup \cI^{(1)}),\,\quad N_+\in \cV_{s + 2\s,r - 5\rho}^{\ge \tM_*} \cap \cI^{(2)}\,. 
$$
More specifically, the following bounds hold:

\begin{align}\label{assassino}
		\bnorm{F}_{s + 2\s, r - 3\rho} & \lesssim \pa{\frac{r}{\rho}}^3  e^{2^8\tc/\s^6}\frac{\bnorm{X}_{s,r}}{\g}\left(1 + \frac{\bnorm{Z}_{s,r}}{\g} + \frac{\bnorm{N}_{s,r}}{\g}\right)^3 \,, \\
\bnorm{X_+}_{s + 2\s,r - 5\rho} &\lesssim \pa{\frac{r}{\rho}}^8 \g^{-1} e^{{2^9}\tc/\s^6}{\pa{1 + \frac{\bnorm{Z}_{s,r}}{\g} + \frac{\bnorm{N}_{s,r}}{\g}}^7} \bnorm{X}_{s,r} ^2\\
\bnorm{N - N_+}_{s + 2\s,r - 5\rho} & \lesssim \pa{\frac{r}{\rho}}^8 e^{{2^9}\tc/\s^6}{\pa{1 + \frac{\bnorm{Z}_{s,r}}{\g} + \frac{\bnorm{N}_{s,r}}{\g}}^7}\pa{ \frac{\bnorm{Z}_{s,r}}{\g} + \frac{\bnorm{N}_{s,r}}{\g} + \frac{\bnorm{X}_{s,r}}{\g}} \bnorm{X}_{s,r}\,.
\end{align}

\end{lemma}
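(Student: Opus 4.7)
The plan is to perform a single KAM step: construct a generating vector field $F \in \cV^{\ge \tM_*}\cap(\cI^{(0)}\cup\cI^{(1)})$ via the homological equation of Proposition \ref{homo prop}, and then push $W$ forward by the time-one flow $\Phi^1_F$ supplied by Lemma \ref{ham flow}. The structural subtlety, invisible in a purely diagonal situation, lies in Lemma \ref{lem:projections}: the ideals $\cI^{(0)},\cI^{(1)},\cI^{(2)}$ are only partially decoupled under the Lie bracket. Items (3)--(4) yield $\Pi^{\cI^{(0)}}[\,\cdot\,,N]=0$ and $[\cI^{(1)},\cI^{(2)}]\subseteq \cI^{(2)}$, but $[F_0,N]$ for $F_0\in\cI^{(0)}$ and $N\in\cI^{(2)}$ may produce a nontrivial $\cI^{(1)}$-component. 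To secure a bound on $X_+$ that is quadratic in $\bnorm X$ (as claimed), this cross-term has to be absorbed into the homological equation, not treated as a remainder.

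Accordingly, split $X=X_0+X_1$ with $X_i\in\cI^{(i)}$ and build $F=F_0+F_1$ in two substeps. First, apply Proposition \ref{homo prop} at scale $(s+\sigma,r-\rho)$ with right-hand side $Y_0=-X_0$ to obtain $F_0\in\cI^{(0)}\cap\cV^{\ge\tM_*}$ solving $\Pi^{\cI^{(0)}}[F_0,\tD(\lambda)+Z]=-X_0$. Second, estimate the now-explicit correction $\Pi^{\cI^{(1)}}[F_0,Z+N]$ by Proposition \ref{fan} and reapply Proposition \ref{homo prop} at scale $(s+2\sigma,r-3\rho)$ with
\[
Y_1 \;=\; -X_1 - \Pi^{\cI^{(1)}}[F_0,\,Z+N],
\]
producing $F_1\in\cI^{(1)}\cap\cV^{\ge\tM_*}$ with $\Pi^{\cI^{(1)}}[F_1,\tD(\lambda)+Z]=Y_1$. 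Combining items (1)--(4) of Lemma \ref{lem:projections} with the invariance of each $\cI^{(i)}$ under $L_\lambda$ yields $\Pi^{\cI^{(0)}}[F_1,\tD(\lambda)+Z+N]=0$, $\Pi^{\cI^{(0)}}[F_0,N]=0$ and $\Pi^{\cI^{(1)}}[F_0,\tD(\lambda)]=0$, so that $F=F_0+F_1$ satisfies the refined identity
\[
\Pi^{\cI^{(0)}\cup\cI^{(1)}}\!\bigl[F,\,\tD(\lambda)+Z+N\bigr]\;=\;-X.
\]
Chaining the two applications of Proposition \ref{homo prop} yields the bound on $\bnorm F_{s+2\sigma,r-3\rho}$ stated in \eqref{assassino}; the cube $(1+\bnorm Z/\gamma+\bnorm N/\gamma)^3$ arises from the two-step iteration (one factor from each homological solve, one more from the Cauchy estimate on the cross-term).

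Under assumption \eqref{assunzione} the bound on $\bnorm F$ yields the smallness \eqref{x hamflow}, so Lemma \ref{ham flow} provides the analytic flow $\Phi^1_F:B_{r-5\rho}(\tg_{s_1})\to B_{r-3\rho}(\tg_{s_1})$ together with \eqref{pedicini}. Expanding
\[
W_+ \;=\; \exp(\ad_F)\,W \;=\; W+[F,W]+\sum_{k\ge 2}\tfrac{1}{k!}\,\ad_F^k W
\]
and invoking the refined identity, the $\cI^{(0)}\cup\cI^{(1)}$-part of $W+[F,W]$ collapses to $\Pi^{\cI^{(0)}\cup\cI^{(1)}}[F,X]$, already quadratic in $\bnorm X$. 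Hence
\[
X_+=\Pi^{\cI^{(0)}\cup\cI^{(1)}}\!\Big([F,X]+\sum_{k\ge 2}\tfrac{1}{k!}\,\ad_F^k W\Big),
\]
while $N_+-N$ collects $\Pi^{\cI^{(2)}}[F,\tD(\lambda)+Z+X+N]$ and the $\cI^{(2)}$-projection of the tail. The first-order commutator $[F,X]$ is bounded directly by Proposition \ref{fan}. For the tail, apply \eqref{brubeck} with $h=1$ to $\ad_F W=-L_\lambda F+[F,Z+X+N]$, after observing that the a priori unbounded piece $\bnorm{L_\lambda F}$ is controlled \emph{a posteriori} by the homological equation itself, since each $L_\lambda F_i$ is a linear combination of $Y_i$ and $\Pi^{\cI^{(i)}}[F_i,Z]$.

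Beyond the structural point above, the remaining difficulty is essentially combinatorial: one must track the analyticity widths across the six radial layers $r,r-\rho,\ldots,r-5\rho$ and two shifts in $s$, chase the $(r/\rho)$-factors from Proposition \ref{fan} and the $\delta^{-1}\sim r/\rho$ of Lemma \ref{ham flow}, and count the accumulating powers of $1+\bnorm Z/\gamma+\bnorm N/\gamma$ arising from the two applications of Proposition \ref{homo prop} and from \eqref{brubeck}, so as to match the precise exponents $8$, $7$ and $e^{2^9\tc/\sigma^6}$ appearing in the final estimates.
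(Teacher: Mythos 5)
Your proposal is correct and follows essentially the same route as the paper: the same splitting $X=X_0+X_1$, the same triangular pair of homological equations solved via Proposition \ref{homo prop} (the paper's reduced second equation \eqref{1 homo} carries only the correction $[F_0,N]$, while you also keep $\Pi^{\cI^{(1)}}[F_0,Z]$ — an inessential variant estimated in exactly the same way), followed by Lemma \ref{ham flow} and the expansion of $\exp(L_F)W$. Your a posteriori control of $L_\lambda F$ through the homological equation is the same device the paper implements by substituting \eqref{homosys} into the exponential series, so what separates your sketch from the written proof is only the routine domain and exponent bookkeeping that you explicitly defer.
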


This process will be proven to converge and will yield an analytic vector field $\Phi^{\infty}_*W= D_{\lambda}+Z+N^{\infty}$ in some open ball at the origin.\\
{In order to proceed with the main step}, we shall construct the desired diffeomorphism as the exponential $\exp {L_F}$, where $F = F_0 + F_1$ with $F_i \in \cI^{(i)},\, i= 0,1$.  \\ Note that,  by construction the addenda $X_0$ and $X_1$ belong to $\cR^{\ge \tM^*}_{s,r}$,  for all $r \le \mathtt R$.  \\
Let us expand 
$$\exp{L_F} W =  \tD(\lambda) + Z + X_0 + X_1 + X_2 + [F,  \tD(\lambda) + Z + X_2 ] + [F, X_0 + X_1] + \sum_{k\ge 2} \frac{L^k_F W}{k!}. $$

We shall fix $F_0$ and $F_1$ as the (unique!) solutions of the homological equations

\begin{equation}\label{homosys}
\begin{aligned}
\Pi^{\cI^{(0)}} [F,  \tD(\lambda) + Z + N] &= - X_0 \\
\Pi^{\cI^{(1)}} [F,  \tD(\lambda) + Z + N ] &= - X_1
\end{aligned}
\end{equation}

Since $Z$ it is a diagonal vector field of degree $\ge 1$, then it necessarily belongs to $\iacopo\oplus\dom$. Moreover, by  Lemma \ref{lem:projections} (2)-(4), the first equation reduces to 
\begin{equation}\label{0 homo}
\Pi^{\cI^{(0)}} [F_0, \tD(\lambda) + Z] = - X_0 
\end{equation}
similarly the second equation reduces to 
\begin{equation}\label{1 homo}
\Pi^{\cI^{(1)}} ([F_1, \tD(\lambda) + Z] + [F_0,N])= - X_1\,. 
\end{equation}
The system of equations \eqref{0 homo}-\eqref{1 homo} is triangular and admits a unique solution.\\
Let us start with equation \eqref{0 homo}. By Proposition \ref{homo prop} the unique solution $$F_0 = (\Pi^{\ozio} L_{\lambda}\Pi^{\ozio} + \Pi^{\ozio} L_{Z}\Pi^{\ozio})^{-1}   (- X_0) $$   satisfies

\begin{equation*}
\|F_0\|_{s+\s,r - \rho} \lesssim \frac{r}{\rho} \g^{-1} {e^{2^7\tc/\s^6}} (1 + \g^{-1}\|Z\|_{s,r}) \|X_0\|_{s,r}\,.
\end{equation*}
Plugging it into equation \eqref{1 homo}, we determine analogously $F_1$ which, by Proposition \ref{fan} (recall that $\frac{r}{\rho}>1$), satisfies
\begin{align*}
\|F_1\|_{s + 2\s, r - 3\rho} &\lesssim \frac{r}{\rho} \g^{-1} {e^{2^7\tc/\s^6}} (1 + \g^{-1}\|Z\|_{s+\s,r - 2\rho}) \|X_1 + [F_0, N]\|_{s+\s,r - 2\rho}\\
& \lesssim \frac{r}{\rho} \g^{-1} {e^{2^7\tc/\s^6}}(1 + \g^{-1}\|Z\|_{s,r}) \left[\|X_1\|_{s+\s,r - \rho} +  4(1 + \frac{r - \rho}{\rho})\|F_0\|_{s+\s, r - \rho}\|N\|_{s+\s, r - \rho} \right]\\
& \lesssim \pa{\frac{r}{\rho}}^3 \g^{-1}  {e^{2^8\tc/\s^6}}(1 + \g^{-1}\|Z\|_{s,r})^2 {\left(\norma{X_0}_{s,r} + \norma{X_1}_{s,r}\right) }\pa{1 + \g^{-1}\|N\|_{s,r}} \\
& \lesssim \pa{\frac{r}{\rho}}^3 \g^{-1}  {e^{2^8\tc/\s^6}}(1 + \g^{-1}\bnorm{Z}_{s,r} + \g^{-1}\bnorm{N}_{s,r})^3 \bnorm{X}_{s,r}\,.
\end{align*}
Hence, 
\begin{equation}\label{F stimafin}
	\bnorm{F}_{s + 2\s, r - 3\rho} \lesssim \pa{\frac{r}{\rho}}^3  e^{2^8\tc/\s^6}\frac{\bnorm{X}_{s,r}}{\g}\left(1 + \frac{\bnorm{Z}_{s,r}}{\g} + \frac{\bnorm{N}_{s,r}}{\g}\right)^3 \,, 
\end{equation}
which, by \eqref{assunzione}, yields estimate \eqref{x hamflow}. 
{Recalling that $L_\lambda = [D_{\lambda}, \cdot]$ preserves monomial vector fields and scaling degree,} we have that 
\begin{align*}
(e^{L_F}) W &=  \tD(\lambda) + Z  + N + \Pi^{(2)}[F,  \tD(\lambda) + Z + N ] + [F, X_0 + X_1] + \sum_{k\ge 2} \frac{\ad^k_F (\tD(\lambda) + Z + N)}{k!} +  \sum_{k\ge 2} \frac{\ad^k_F (X_0 + X_1)}{k!}\\
& =  \tD(\lambda) + Z  + N + \Pi^{(2)}[F,  Z + N ] + \sum_{k\ge 2} \frac{\ad^k_F (\tD(\lambda) + Z + N)}{k!} +  \sum_{k\ge 1} \frac{\ad^k_F (X_0 + X_1)}{k!}\\ 
&\stackrel{\eqref{homosys}}{=}  \tD(\lambda) + Z  + N + \Pi^{(2)}[F,  Z + N ] + \sum_{k\ge 2} \frac{\ad^{k-1 }_F (\Pi^{(2)}[F, Z + N]-X_0-X_1)}{k!} +  \sum_{k\ge 1} \frac{\ad^k_F (X_0 + X_1)}{k!}\\ 
&  = \tD(\lambda) + Z  + N +\sum_{k\ge 1} \frac{\ad^{k-1 }_F \Pi^{(2)}[F, Z + N]}{k!} - \sum_{k\ge 1} \frac{\ad^{k}_F (X_0+X_1)}{k+1!} +  \sum_{k\ge 1} \frac{\ad^k_F (X_0 + X_1)}{k!}\\ 
& =  \tD(\lambda) + Z  + N +\sum_{k\ge 1} \frac{\ad^{k-1 }_F \Pi^{(2)}[F, Z + N]}{k!}  +  \sum_{k\ge 1}\ad^k_F (X_0 + X_1) \frac{k}{(k+1)!}\\
& = \tD(\lambda) + Z  + X^+ + N^+\,,
\end{align*}

where $X^+ = X^+_0 + X^+_1$.

We now systematically make use of Propositions \ref{fan} and \ref{ham flow}. Note that in the first series 
$$
\sum_{k\ge 1} \frac{\ad^{k-1 }_F \Pi^{(2)}[F, Z + N]}{k!}
$$
the term $k=1$ does not contribute to $X^+$ but only to $N^+$. 
 The following estimates hold.
\begin{align*}
	&\bnorm{X^+}_{s + 2\s, r - 5\rho} \lesssim \frac{r- 4\rho}{\rho}\|F\|_{s + 2\s,r-4\rho}\|[F,Z+N]\|_{s + 2\s,r-4\rho} + \frac{r - 4\rho}{\rho} \|F\|_{s + 2\s,r-4\rho} \|{X_0 + X_1}\|_{s + 2\s, r - 4\rho}\\
	& \lesssim  \pa{\frac{r}{\rho}}^2\bnorm{F}^2_{s + 2\s,r-3\rho}\bnorm{Z+N}_{s,r} + \pa{\frac{r}{\rho}}\bnorm{F}_{s + 2\s,r-3\rho}\bnorm{X}_{s,r} \\
	& \lesssim \pa{\frac{r}{\rho}}^8 \g^{-2} e^{{2^9}\tc/\s^6} (1 + \g^{-1}\bnorm{Z}_{s,r}+ \g^{-1}{\bnorm{N}_{s,r}})^6 {\bnorm{X}_{s,r}^2}  \bnorm{Z+N}_{s,r} \\
	&+\pa{\frac{r}{\rho}}^4 \g^{-1} e^{2^8\tc/\s^6} (1 + \g^{-1}\bnorm{Z}_{s,r} + \g^{-1}{\bnorm{ N}_{s,r}})^3{\bnorm{X}^2_{s,r}} \\
	& \lesssim \pa{\frac{r}{\rho}}^8 \g^{-1} e^{{2^9}\tc/\s^6}{\pa{1 + \frac{\bnorm{Z}_{s,r}}{\g} + \frac{\bnorm{N}_{s,r}}{\g}}^7} \bnorm{X}^2_{s,r}\,. 
	\end{align*}
	\begin{align*}
	& \bnorm{N^+ - N}_{s + 2\s, r - 5\rho} \lesssim  \pa{\frac{r}{\rho}}\norma{F}_{s + 2\s,r-3\rho}\|Z+N\|_{s,r} + \pa{\frac{r}{\rho}}^8 \g^{-1} e^{{2^9}\tc/\s^6}{\pa{1 + \frac{\bnorm{Z}_{s,r}}{\g} + \frac{\bnorm{N}_{s,r}}{\g}}^7} \bnorm{X}^2_{s,r} \\
	&\lesssim \pa{\frac{r}{\rho}}^4 e^{2^8\tc/\s^6}  \bnorm{X}_{s,r} \pa{ 1 + \frac{\bnorm{Z}_{s,r}}{\g} + \frac{\bnorm{N}_{s,r}}{\g}}^3 \pa{\frac{\bnorm{Z}_{s,r}}{\g} + \frac{\bnorm{N}_{s,r}}{\g}}+\\
	& +  \pa{\frac{r}{\rho}}^8 \g^{-1} e^{{2^9}\tc/\s^6}{\pa{1 + \frac{\bnorm{Z}_{s,r}}{\g} + \frac{\bnorm{N}_{s,r}}{\g}}^7} \bnorm{X}^2_{s,r}\\
	&\lesssim \pa{\frac{r}{\rho}}^8 e^{2^9\tc/\s^6}  \bnorm{X}_{s,r} \pa{ 1 + \frac{\bnorm{Z}_{s,r}}{\g} + \frac{\bnorm{N}_{s,r}}{\g}}^7 \pa{\frac{\bnorm{Z}_{s,r}}{\g} + \frac{\bnorm{N}_{s,r}}{\g} + \frac{\bnorm{X}_{s,r}}{\g}}\,.
\end{align*}
\textbf{Iterative Lemma.}
Fix
 $r_0 = 2 r', s_0 = \mathtt{s}, \rho = r', \s =  s' - \mathtt{s}$   and let $\{\rho_n\}_{n\in\N}, \{\s_n\}_{n\in\N}$ be  the  summable  sequences:
 	\begin{equation}\label{amaroni}
 	\rho_n= \frac{\rho}{{10}} 2^{-n}\,,\qquad \sigma_0 = \frac{\s}{8}, \quad \s_n = {\frac{9\s}{4\pi^2 n^2}}\quad \forall n\ge 1\,.
 \end{equation} 
   Let us define recursively
   \begin{eqnarray}\label{pesto}
 &&r_{n+1} = r_n - 5\rho_n\ \to \ r_\infty:=r_0-\rho = r'\qquad   {\rm (decr
 easing)} \nonumber\\
 &&s_{n+1} = s_n + 2\sigma_n\ \to \ s_\infty:=
 s_0+\sigma = s'\qquad  {\rm (increasing)} \nonumber  .
\end{eqnarray}

Let  $$W^0 := \tD(\lambda) + Z + X_0  + N_0\,,
 $$
 where
\begin{equation}\label{bisanzio}
X_0\in \cV_{\mathtt s_0,r_0}^{\ge \tM_*} \cap (\cI^{(0)} \cup \cI^{(1)}),  \quad Z \in \cK^{diag}_{s,\mathtt{R}}\cap \cV^{(1\le \td \le \tM_* - 1)}_{s_0,r_0}\,\quad N_0 \in \cV_{\mathtt s_0,r_0}^{\ge \tM_*}\cap \cI^{(2)}  \,.
\end{equation}

 We define 
\begin{equation}\label{vigili}
 \e_0:=\gamma^{-1} \bnorm{X_0}_{s_0,r_0} ,  \quad \Theta_0:=  \gamma^{-1}\pa{\bnorm{Z}_{s_0,r_0} + \bnorm{N_0}_{s_0,r_0}} +\e_0 
 \end{equation}

\begin{lemma}[Iterative step]\label{iterativo}
Let $r_0,s_0, \rho, \s$ be  as above,
  $\rho_n, \s_n, r_n, s_n, $  as in \eqref{amaroni}-\eqref{pesto},
  $W_0,X_0, Z, N_0$ as in \eqref{bisanzio}
  and
$\eps_0,\Theta_0$  as in \eqref{vigili}.

 There exists  a constant $\frak C>1$ large enough
 such that 
if 
\begin{equation}\label{gianna}
\eps_0 	\leq \pa{1 + \Theta_0}^{-{7}} \tK^{{-1}}\,,
\qquad 
\tK
:= \frak C  \sup_n 2^{{9n}}e^{\crac n^{12}} e^{-\chi^n (2-\chi) } \,,
\qquad
\crac := 2^9\pa{\frac{4\pi^2}{9\s}}^{6} \tc
\end{equation}
($\tc$ defined in Lemma \ref{adjoint action})
then we can iteratively construct a sequence of generating vector fields
$F_i  \in \cV_{\mathtt s_{i+1},r_i - 3\rho_i}^{\ge \tM_*} \cap (\cI^{(0)} \cup \cI^{(1)})$ 
such that the following holds, for $n\ge 0$.

\smallskip

$(1)_n$ For all $ i = 0,\ldots,  n -1 $ and any $s\ge s_{i+1}$ the 
time-1 flow
 $\Phi_{F_i}$ generated by $F_i$   satisfies
	\begin{equation}
\sup_{u\in  {\bar B}_{r_{i+1}}(\tg_{s})} \norm{\Phi_{F_i}(u)- u}_{s} \le \rho 2^{-2i-7} \,\label{ln}
 \end{equation}
 Moreover, for $n\ge 1$
	\begin{equation}\label{ucazzo}
	\Psi_n := \Phi_{F_0}\circ\cdots \circ \Phi_{F_{n-1}} 
	\end{equation}
	is a well defined, analytic map ${\bar B}_{r_n }(\tg_{s}) \to {\bar B}_{r_0}(\tg_{s})$ for all $s\ge s_n$ with the bound
	\begin{equation}
	\label{cosi}
	 \sup_{u\in {\bar B}_{r_n}(\tg_{s})}\abs{\Psi_{n}(u) - \Psi_{n-1}(u)}_s  \le  \rho 2^{-2n + 2}.
	\end{equation}

	$(2)_n$ We set for $i=1,\dots,n$ 
	 $$W_i= \exp(L_{F_{i-1}})W_{i-1}.$$	 We have
	\begin{equation}
\label{cioccolato}
W_{i} = \tD(\lambda) + Z + X_{i} + N_i,\qquad X_{i}, \in \cV_{\mathtt s_{i},r_i}^{\ge \tM_*} \cap (\cI^{(0)} \cup \cI^{(1)})\,, N_i\in \cV_{\mathtt s_{i},r_i}^{\ge \tM_*} \cap \cI^{(2)} .
\end{equation}
 Setting for $ i = 0,\ldots, n  $
	\begin{equation}\label{xhx-i}
	 \eps_i:=\gamma^{-1}\bnorm{X_i}_{r_i, s_i},  \quad \Theta_i:=\gamma^{-1}\pa{\bnorm{{Z}}_{r_i,s_i} + \bnorm{{N_i}}_{r_i,s_i}} +\e_i \,,
	\end{equation}
we have
\begin{equation}
 \e_i \leq   \e_0  e^{- \chi^{i}+1} \,, 
\qquad
\chi:=3/2\,,\qquad
\qquad  \Theta_i \leq   \Theta_0 \sum_{j=0}^i 2^{-j}\, \label{en} \,.
\end{equation}
\end{lemma}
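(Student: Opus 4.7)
The proof proceeds by induction on $n$, the inductive step being a single application of the Main KAM step (Lemma \ref{uuu}) to the current vector field $W_n$, with the parameters $(s_n,r_n,\rho_n,\s_n)$ prescribed in \eqref{amaroni}-\eqref{pesto}. The base case $n=0$ is vacuous: $(1)_0$ is empty and $(2)_0$ is just the hypothesis on $W_0$ together with the definitions of $\e_0$ and $\Theta_0$. So all the work is in the inductive step.

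Suppose $(1)_n$ and $(2)_n$ hold. The first task is to verify that the smallness assumption \eqref{assunzione} of Lemma \ref{uuu} is satisfied for $W_n$ at parameters $(s_n,r_n,\rho_n,\s_n)$, i.e.
\[
(1+\Theta_n)^3 \e_n \;\le\; \tK_1\,(\rho_n/r_n)^4 \, e^{-2^8\tc/\s_n^6}.
\]
By the inductive bound \eqref{en}, $\Theta_n\le 2\Theta_0$ and $\e_n\le \e_0 e^{-\chi^n+1}$; by the definitions of $\rho_n$ and $\s_n$, we have $r_n/\rho_n \leq C\, 2^n$ and $1/\s_n^6\leq C'\, n^{12}/\s^6$. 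Plugging in, the condition reduces to $\e_0\,(1+\Theta_0)^3 \leq \tK_1'\,2^{-8n}\,e^{-\crac n^{12}}\,e^{\chi^n-1}$ for every $n\ge 0$, which is guaranteed by the choice of $\tK$ in \eqref{gianna} (with room to spare, since \eqref{gianna} carries $(1+\Theta_0)^{-7}$).

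Once the smallness is checked, Lemma \ref{uuu} produces a generating vector field $F_n\in \cV^{\ge\tM_*}_{s_n+2\s_n,\,r_n-3\rho_n}\cap(\cI^{(0)}\cup\cI^{(1)})$ satisfying \eqref{x hamflow} and hence an analytic time-$1$ flow $\Phi_{F_n}:\bar B_{r_{n+1}}(\tg_s)\to\bar B_{r_n-3\rho_n}(\tg_s)$, valid for all $s\ge s_{n+1}$, with
\[
\sup_{u\in \bar B_{r_{n+1}}(\tg_s)}\|\Phi_{F_n}(u)-u\|_s \;\le\; (r_n-3\rho_n)\,\bnorm{F_n}_{s_n+2\s_n,\,r_n-3\rho_n},
\]
by \eqref{pollon}. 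Using the estimate \eqref{assassino} of Lemma \ref{uuu} together with the bounds on $\Theta_n$ and $\e_n$ and the scaling of $\rho_n,\s_n$, one checks (again using \eqref{gianna}) that the right hand side is bounded by $\rho\, 2^{-2n-7}$, which is \eqref{ln} at level $n$. Summing the geometric series $\sum_{i\ge 0}\rho\,2^{-2i-7}<\rho/2$, one then proves \eqref{cosi} and that $\Psi_{n+1}:=\Psi_n\circ\Phi_{F_n}$ maps $\bar B_{r_{n+1}}(\tg_s)$ into $\bar B_{r_0}(\tg_s)$ analytically, which gives $(1)_{n+1}$.

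It remains to upgrade \eqref{cioccolato}-\eqref{en} from $n$ to $n+1$. Defining $W_{n+1}:=\exp(L_{F_n})W_n$, Lemma \ref{uuu} yields the decomposition \eqref{cioccolato} at level $n+1$, together with
\[
\e_{n+1} \;\lesssim\; (r_n/\rho_n)^8\, e^{2^9\tc/\s_n^6}\,(1+\Theta_n)^7\,\g\,\e_n^2,
\qquad
\Theta_{n+1}-\Theta_n \;\lesssim\; (r_n/\rho_n)^8\, e^{2^9\tc/\s_n^6}\,(1+\Theta_n)^7\,\Theta_n\,\e_n.
\]
Inserting the bounds $r_n/\rho_n\le C\,2^n$ and $1/\s_n^6 \le C' n^{12}/\s^6$, the constant on the right is of size $2^{8n}e^{\crac n^{12}}$ up to a pure numeric factor, and this is precisely the quantity controlled by $\tK$. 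For the $\e$ bound, the ansatz $\e_n\le \e_0 e^{-\chi^n+1}$ with $\chi=3/2$ gives
\[
\e_{n+1} \;\lesssim\; \e_0\,e^{-\chi^{n+1}+1}\cdot\bigl[\e_0(1+\Theta_0)^7\,\tK\bigr],
\]
where the bracketed quantity is $\le 1$ by \eqref{gianna}; this closes the induction on $\e$. The heart of the matter is the choice $\chi<2$: the superlinear contraction $\e_n^2$ produces a factor $e^{-2\chi^n}$ that must beat \emph{both} the geometric factor $2^{8n}$ and the exponential-in-$n^{12}$ factor $e^{\crac n^{12}}$, while still leaving a margin $e^{-\chi^{n+1}}=e^{-\chi\cdot\chi^n}$. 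Taking $\chi=3/2$ gives the surplus exponent $\chi^n(2-\chi)$ that absorbs the constants, exactly as recorded in \eqref{gianna}. For the $\Theta$ bound, the extra factor $\e_n$ in the right hand side gives a summable increment $\Theta_{n+1}-\Theta_n \le \Theta_0\,2^{-(n+1)}$, so that $\Theta_{n+1}\le \Theta_0\sum_{j=0}^{n+1}2^{-j}$, completing $(2)_{n+1}$.

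The main obstacle is the balancing of the constants: losing a factor $e^{C/\s_n^6}$ per step from the homological equation forces $\s_n$ to go to zero only polynomially (here like $1/n^2$) so that $\sum_n \s_n<\infty$, while the Newton-like quadratic gain must still dominate the blow-up $e^{\crac n^{12}}$. The choice $\chi=3/2$ (strictly less than $2$) gives room in the exponent to absorb \emph{both} the geometric factor from $r_n/\rho_n\sim 2^n$ and this super-polynomial factor, and fixes the form of the smallness threshold $\tK$ in \eqref{gianna}.
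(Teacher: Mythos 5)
Your proposal is correct and follows essentially the same route as the paper: induction in $n$, verification of the smallness condition \eqref{assunzione} from \eqref{gianna} so that the Main step Lemma \ref{uuu} applies at parameters $(s_n,r_n,\rho_n,\s_n)$, the flow bounds \eqref{ln}--\eqref{cosi} from \eqref{assassino} and \eqref{pedicini}, and the quadratic recursion for $\e_n$ plus summable increments for $\Theta_n$, closed with $\chi=3/2<2$ absorbing both $2^{8n}$ and $e^{\crac n^{12}}$ — exactly as in the paper, only spelled out in more detail. The only slip is the stray factor $\g$ in your displayed recursion, which by \eqref{assassino} and $\e_i=\g^{-1}\bnorm{X_i}_{s_i,r_i}$ should read $\e_{n+1}\lesssim \pa{r_n/\rho_n}^{8}e^{2^9\tc/\s_n^6}\pa{1+\Theta_n}^{7}\e_n^{2}$; this does not affect the argument.
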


\begin{proof}
	We prove it by induction. The case  $n=0$ follows directly since item $(1)$ is empty and item $(2)$ is tautological. Let us now assume the Lemma holds up to $n$ and prove it for $n+1$.  Our purpose is to apply the Main step Lemma \ref{uuu}. Let us start by proving item $(1)_{n+1}$.  By the  smallness hypothesis \eqref{gianna}, choosing $\mathfrak{C} \ge \frac{8 20^4 e}{\mathtt{K}_1}$, condition \eqref{assunzione} is fullfilled. Thus Lemma \ref{uuu} ensures the existence of $F_n, X_{n+1}, N_{n+1}$. The bound \eqref{ln} follows from the smallness hypothesis, the first bound in \eqref{assassino} and \eqref{pedicini}, provided that  $\mathfrak{C}$
	is sufficiently large to control the constant in \eqref{assassino}. The bound \eqref{cosi} follows readily from \eqref{ln}. \\
	Let us now prove item $(2)_{n+1}$.  By the second and third inequalities in \eqref{assassino} we have:
\begin{align}
	\eps_{n+1} & \le \mathtt{K}_2 \pa{\frac{r_n}{\rho_n}}^{8} e^{\tC'n^{12}} \pa{1 + \Theta_n}^7\eps_n^2 \\
	|\Theta_{n+1} - \Theta_n| &\le \mathtt{K}_3 \pa{\frac{r_n}{\rho_n}}^{8} e^{\tC'n^{12}} \pa{1 + \Theta_n}^7\eps_n \Theta_n\,. 
	\end{align}
	
	Then substituting the inductive hypothesis \eqref{en} together with the smallness condition \eqref{gianna} with $\mathfrak{C}$ large enough, we obtain the bounds \eqref{en} for $n+1$. 
\end{proof}

\begin{cor}\label{convergo} The family of maps $\pa{\Psi_n}_{n}$, the families of vector fields $X_n$ and $N_n$ are all Cauchy sequences. As a consequence $\Psi:=\lim_{n\to \infty}\pa{\Psi_n}_{n}$
is well defined as a map from ${\bar B}_{r'}\pa{\tg_{s'}}$
to $ {\bar B}_{2r'}\pa{\tg_{s'}}$, and $\Psi_{*} W^0 = \tD(\lambda) + Z + N_\infty$, where $N_{\infty} = \lim_{n\to+\infty} N_n$ with $N_\infty\in \cV^{\ge \tM_*}_{s',r'}\cap \cI^{(2)}$.
  \end{cor}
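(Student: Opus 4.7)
The plan is to exploit the super-exponential decay of $\varepsilon_n$ together with the uniform bound $\Theta_n \le 2\Theta_0$ established in \eqref{en} to extract three Cauchy sequences from Lemma \ref{iterativo}, then pass to the limit in the conjugation identity $W_n = (\Psi_n)_* W^0 = \tD(\lambda) + Z + X_n + N_n$. First, for the diffeomorphisms: since $r' \le r_n$ and $s' \ge s_n$ for every $n$, the ball $\bar B_{r'}(\tg_{s'})$ is contained in each $\bar B_{r_n}(\tg_{s_n})$, and the telescoping estimate \eqref{cosi} yields
$$
\sup_{u \in \bar B_{r'}(\tg_{s'})} |\Psi_m(u) - \Psi_n(u)|_{s'} \le \rho \sum_{k=n+1}^m 2^{-2k+2} = O(4^{-n}).
$$
Thus $(\Psi_n)$ is uniformly Cauchy, and its limit $\Psi$ is continuous; analyticity then follows because a uniform limit of Banach-space--valued holomorphic maps is holomorphic. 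The range inclusion $\Psi(\bar B_{r'}(\tg_{s'})) \subseteq \bar B_{2r'}(\tg_{s'})$ is inherited from $\Psi_n(\bar B_{r_n}(\tg_{s_n})) \subseteq \bar B_{r_0}(\tg_{s_0})$ at each finite stage together with $r_0 = 2r'$ and $s_0 = \mathtt s < s'$.

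Next, I would handle the two vector-field sequences. The bound $\bnorm{X_n}_{s_n,r_n} = \gamma \varepsilon_n \le \gamma e\, e^{-\chi^n}$ from \eqref{en} combined with the monotonicity of norms in Lemma \ref{monotone} gives $\bnorm{X_n}_{s',r'} \le \bnorm{X_n}_{s_n,r_n} \to 0$. For $N_n$, the third inequality of \eqref{assassino} applied at stage $n$, together with the uniform bound $\Theta_n \le 2\Theta_0$, produces
$$
\bnorm{N_{n+1} - N_n}_{s_{n+1}, r_{n+1}} \le C(\rho, \sigma, \Theta_0)\, \varepsilon_n,
$$
which is summable by the super-exponential decay of $\varepsilon_n$. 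Monotonicity of norms transfers this to $\cV_{s',r'}$, so $(N_n)$ is Cauchy in $\cV_{s',r'}$ and converges to some $N_\infty$. Since both $\cV^{\ge \tM_*}_{s',r'}$ and $\cI^{(2)}$ are kernels of continuous projections (on low-degree terms, and on $\cI^{(0)} \oplus \cI^{(1)}$, respectively), they are closed, so $N_\infty \in \cV^{\ge \tM_*}_{s',r'} \cap \cI^{(2)}$.

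The last step, and the only delicate point, is to pass to the limit in the conjugation identity. For every analytic test function $f$ defined on a slightly smaller ball, one has
$$
W^0[f \circ \Psi_n] = (W_n[f]) \circ \Psi_n
$$
on $\bar B_{r'}(\tg_{s'})$. By uniform convergence $\Psi_n \to \Psi$ and Cauchy estimates for the differential of $f$, the left-hand side tends to $W^0[f \circ \Psi]$; by the convergence $W_n \to \tD(\lambda) + Z + N_\infty$ in $\cV_{s',r'}$ obtained in the previous step together with the uniform convergence of $\Psi_n$, the right-hand side tends to $((\tD(\lambda) + Z + N_\infty)[f]) \circ \Psi$. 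Since $f$ is arbitrary this gives $\Psi_* W^0 = \tD(\lambda) + Z + N_\infty$. The main obstacle here is justifying this commutation of pushforward with the limit, which requires controlling the inverse maps $\Psi_n^{-1}$; this is standard since \eqref{ln} guarantees that each $\Phi_{F_i}$ is $O(\rho 2^{-2i-7})$-close to the identity, so by the Banach inverse function theorem the $\Psi_n$ are uniformly invertible on a sub-ball and their inverses are equally Cauchy.
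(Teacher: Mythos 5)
Your argument is correct and is essentially the paper's own (the paper simply notes that the corollary follows directly from \eqref{cosi} and \eqref{en}); you have filled in the routine details — telescoping \eqref{cosi} for the maps, \eqref{en} plus the third bound of \eqref{assassino} for $X_n\to 0$ and the Cauchy property of $N_n$, closedness of $\cV^{\ge \tM_*}_{s',r'}\cap\cI^{(2)}$, and passage to the limit in the conjugacy — in the natural way. One small imprecision: the prefactor in your Cauchy estimate for $N_n$ is not independent of $n$, since it contains $\pa{r_n/\rho_n}^8 e^{2^9\tc/\s_n^6}\sim 2^{8n}e^{C n^{12}}$, but as you yourself observe this is beaten by the super-exponential decay $\eps_n\lesssim e^{-\chi^n}$, so the summability and hence the conclusion stand.
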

  The proof follows directly from \eqref{cosi} and from \eqref{en}.
  
\begin{proof}[Proof of Theorem \ref{main-thm}]  
  Let us verify that the vector field $W$ satisfies the hypothesis of the iterative lemma. To this purpose, let us decompose\footnote{Here the sub-index does not represent the component on the subspace $\cI^{(0)}$.} $X = X_0 + N_0$ where $X_0 \in \cI^{(0)}\oplus \cI^{(1)}$ and $N_0 \in\cI^{(2)}$. Recalling that 
  $$
  \norma{X}_{\mathtt{s},2r'} \leq \pa{\frac{2r'}{\mathtt{R}}}^{\tM^*} \norma{X}_{\mathtt{s},\mathtt{R}}\,, \quad \quad \norma{Z}_{\mathtt{s},2r'} \leq \pa{\frac{2r'}{\mathtt{R}}} \norma{Z}_{\mathtt{s},\mathtt{R}}
  $$
  the smallness conditions are met provided that $r'$ is small enough. The result follows.
\end{proof}

  \appendix
  \section{Properties of regular vector fields and proof of Homological equation}
 \subsection{ Proof of Lemma \ref{monotone}}\label{appendicite}
 The proof is a minor adaptation of similar results for Hamiltonian vector fields.
Given a vector field $V\in \cV_{ s,r}$,  we define  a map
 \[
 B_1(\ell^2(I,\C))\to \ell^2(I,\C) \,,\quad y=\pa{y_k}_{k\in I }\mapsto 
 \pa{Y^{(k)}_{V}(y;r,s)}_{k\in I}
 \]
 by setting
 \begin{equation}\label{giggina}
 	Y^{(k)}_{V}(y;r,s) := \sum_\ast |V_q^{(k)}| c^{(k)}_{r,s}(q) y^{q}
 \end{equation}
where we set
 \begin{equation}
 	\label{persico}
 	c^{(k)}_{r,s}(q):=  r^{|q|-1} \pa{\frac{\jap{k}}{\prod_h\jap{h}^{q_h}}}^{2} e^{-s (\sum_h \jap{h}^\theta q_h -\jap{k}^\theta)}\,.
 \end{equation}
 
 For brevity, let us define
 $$
 \sum_\ast:=\sum_{q\in \Z^I_f, k\in I  \;\fm\cdot q =\fm_k}\,.
 $$

 The vector field $Y_V$ is a majorant  analytic function on $\ell^2$ which has the {\it same norm as $V$}. Since the majorant  analytic functions on a given space have a natural ordering this gives us a natural criterion for immersions, as formalized in the following Lemma.
 \begin{lemma}\label{stantuffo}	 Let
 	$\ri,\rf>0,\,s,s'\geq 0.$ The following properties hold.
 	\begin{enumerate}
 		\item   The norm of $V$ can be expressed as
 		\begin{equation}\label{ypsilon}
 			\norm{V}_{r,s}= 
 			\sup_{|y|_{\ell^2}\le 1}\abs{Y_V(y;r,s)}_{\ell^2}
 		\end{equation}
 		\item  Given 
 		$
 		V\in \cV_{\rf,s'}$ 
 		and $W\in \cV_{\ri,s}\,,
 		$
 		\\	
 		such that for all $q\in \N^I_f$ and all $k\in I$ such that $\fm\cdot q =\fm_k$
 		one has
 		\[
 		|V^{(k)}_{q}| c^{(k)}_{\rf,s'}(q)  
 		\le 
 		c
 		|W^{(k)}_{q}| c^{(k)}_{\ri,s}(q),
 		\]
 		for some $c>0,$
 		then
 		\[
 		\norm{V}_{\rf,s'}
 		\le 
 		c
 		\norm{W}_{\ri,s}\,.
 		\]
 	\end{enumerate}
 \end{lemma}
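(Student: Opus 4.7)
\medskip
\noindent\textbf{Proof plan.} The crux of both items is the observation that the weighted $\ell^2$-norm $|\cdot|_s$ on $\tg_s$ is obtained from the standard $\ell^2$-norm by a diagonal rescaling. Concretely, I would introduce the change of variables
\[
x_k \;=\; r\,\jap{k}^{-1}\,e^{-s\sqrt{\jap{k}}}\,y_k, \qquad k\in I,
\]
which is a linear isomorphism $\ell^2(I,\C)\to\tg_s$ satisfying $|x|_s=r\,|y|_{\ell^2}$ (with the convention used in the paper). In particular it maps the unit ball of $\ell^2$ onto $B_r(\tg_s)$. This is the one nontrivial ingredient; once it is in place, both claims are essentially bookkeeping.

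For item (1), I would plug the above substitution into the definition $\|V\|_{r,s}=\tfrac{1}{r}\sup_{x\in B_r(\tg_s)}|\und{V}(x)|_s$. On monomials one has
\[
x^q \;=\; r^{|q|}\Big(\prod_h \jap{h}^{-q_h}\,e^{-s\sqrt{\jap{h}}\,q_h}\Big)\,y^q,
\]
so that
\[
\und{V}(x)\;=\;\sum_{k\in I}\Big(\sum_q |V_q^{(k)}|\,r^{|q|}\prod_h \jap{h}^{-q_h}\,e^{-s\sqrt{\jap{h}}\,q_h}\,y^q\Big)\,\base{x_k}.
\]
Taking the $|\cdot|_s$-norm pulls out a factor $\jap{k}\,e^{s\sqrt{\jap{k}}}$ per coordinate and, after dividing by the prefactor $r$ and recognizing the definition \eqref{persico} of $c^{(k)}_{r,s}(q)$, one obtains exactly $\frac{1}{r}|\und{V}(x)|_s=|Y_V(y;r,s)|_{\ell^2}$ (the momentum condition $\fm\cdot q=\fm_k$ only restricts the index set of the sums, and matches the definition of $\sum_\ast$ in \eqref{giggina}). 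Taking the supremum over $|y|_{\ell^2}\le 1$ yields \eqref{ypsilon}.

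Item (2) is then immediate. By definition $Y_V$ and $Y_W$ have nonnegative coefficients, so the coefficientwise hypothesis $|V^{(k)}_q|\,c^{(k)}_{\rf,s'}(q)\le c\,|W^{(k)}_q|\,c^{(k)}_{\ri,s}(q)$ gives the pointwise majorization $|Y_V(y;\rf,s')|_{\ell^2}\le c\,|Y_W(y;\ri,s)|_{\ell^2}$ for every $y$ in the positive cone of $\ell^2$. Since the supremum in \eqref{ypsilon} is attained on such $y$ (the coefficients being absolute values), applying part (1) on both sides gives $\|V\|_{\rf,s'}\le c\,\|W\|_{\ri,s}$.

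The only step where I expect to be careful is verifying that the rescaling indeed sends $B_1(\ell^2)$ isometrically onto $B_r(\tg_s)$ with the precise convention of $|\cdot|_s$ used in Definition \ref{hol}, and tracking the exponent and weight bookkeeping so that the coefficient $c^{(k)}_{r,s}(q)$ appears exactly as in \eqref{persico}. Everything else reduces to the elementary monotonicity of the $\ell^2$-norm on majorant (nonnegative-coefficient) power series, a technique already familiar from the Hamiltonian analogue in \cite{BBiP2}.
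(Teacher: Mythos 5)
Your argument is correct and is essentially the paper's own: the published proof is a one-line ``follows directly from the definition of $|\cdot|$ and by \eqref{giggina}'', and your diagonal rescaling of $B_1(\ell^2)$ onto $B_r(\tg_s)$ together with monotonicity of majorant (nonnegative-coefficient) series on the positive cone is precisely what that appeal to the definitions amounts to. The only caveat is the bookkeeping you already flag yourself: with the paper's convention for $|\cdot|_s$ the weight ratio in \eqref{persico} appears squared, so the exponents in your monomial substitution must be matched to that convention, but this does not change the structure of the argument.
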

 \begin{proof}Follows directly from the definition of $|\cdot|$ and by \eqref{giggina}.
 \end{proof}

 In order to prove Lemma \ref{monotone} we need some notations and results proven in \cite{Bourgain:2005} and \cite{Yuan_et_al:2017}.
 \begin{defn}\label{n star}
 	Given a vector $v=\pa{v_h}_{h\in I}\in \N^I_f$ with $|v|\ge 2$  we denote by $\na=\na(v)$ the vector $\pa{\na_l}_{l=1}^N$ (where $N$ is finite)  which is the decreasing rearrangement 
 	of
 	$$
 	\{\N\ni j> 1\;\; \mbox{ repeated}\; \sum_{\s=\pm}v_{j,\s} + v_{-j,\s}\; \mbox{times} \} \cup \{ 1\;\; \mbox{ repeated}\; \sum_{\s=\pm}v_{1,\s} + v_{-1,\s} + v_{0,\s}\; \mbox{times}  \}
 	$$
 \end{defn}
 \begin{remark}
 	A good way of envisioning this list is as follows. Given  an infinite set of variables $\pa{x_i}_{i\in\Z}$ and a vector $v=\pa{v_i}_{i\in \Z}\in \N^\Z_f$ consider the monomial $x^v:= \prod_i x_i^{v_i}$. We can write 
 	\[
 	x^v= \prod_hx_h^{v_h} = x_{h_1} x_{h_2}\cdots x_{h_{|v|}}\,,\quad \mbox{ with}\quad h_i\in I
 	\] 
 	then $\na(v)$ is the decreasing rearrangement of the list $\pa{\jap{h_1},\dots,\jap{h_{|v|}}}$.
 	%
 \end{remark}
Given $q\in\N^I_f$ 
with $ |q|\ge 1$ and $k=(j,\s)\in I$  such that $\fm\cdot q=\fm_k$
from now on we define
$$
\na=\na(q+e_k)\,
\qquad \mbox{and set}\quad
N:=|q|+1 
$$
which is the cardinality of $\na.$ 
We  observe that, $N\ge 2$ and since
\begin{equation}
	\label{moment}
	0= \fm\cdot q -\fm_k=\sum_{i\in \Z} i\pa{q_{i,+}- q_{i,-}} -\s j
\end{equation}
there exists a choice of $\s_i = \pm1, 0$ such that
\begin{equation}\label{pi e cappucci}
	\sum_l \sigma_l\na_l=0.
\end{equation}
with $\sigma_l \neq 0$  if $\na_l \neq 1$.
Hence, 
\begin{equation}\label{eleganza}
	\na_1\le\sum_{l\ge 2}\na_l.
\end{equation}
Indeed, if $\sigma_1 = \pm 1$, the inequality follows directly from \eqref{pi e cappucci}; if $\sigma_1 = 0$, then $\na_1=1$ and consequently $\na_l = 1\, \forall l$. Since
$|v|\ge 2$, the list $\na$ has at least two elements, so the inequality is achieved.
\begin{lemma}\label{constance generalbis}
	Given $q\in\N^I_f$ 
	with $ |q|\ge 1$ and $k=(j,\s)\in I$  such that $\fm\cdot q=\fm_k$ we have
	\begin{equation}\label{yuan 2bis}
		\sum_h \jap{h}^\theta q_h + \jap{k}^\theta = \sum_h \jap{h}^\theta v_h =\sum_{l\ge 1} \na_l^\theta  \ge 2 \na^\theta_1+ (2-2^\teta) {\sum_{l\ge 3} \na_l^\theta} .
	\end{equation}
\end{lemma}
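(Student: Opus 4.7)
The plan is as follows. The two equalities in \eqref{yuan 2bis} are essentially definitional: writing $v := q + e_k$ gives $\sum_h \jap{h}^\theta v_h = \sum_h \jap{h}^\theta q_h + \jap{k}^\theta$, while by Definition \ref{n star} the list $\na$ is the decreasing rearrangement of the multiset of values $\jap{h}$ taken with multiplicity $v_h$, so that $\sum_l \na_l^\theta = \sum_h \jap{h}^\theta v_h$.

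For the last inequality, a simple rearrangement shows that it is equivalent to
$$ \na_1^\theta - \na_2^\theta \le (2^\theta - 1)\sum_{l \ge 3}\na_l^\theta. $$
From the moment condition, the text has already extracted \eqref{eleganza}, namely $\na_1 \le \sum_{l \ge 2}\na_l$; this gives $\na_1 - \na_2 \le \sum_{l \ge 3}\na_l$ and, by monotonicity of $x \mapsto x^\theta$, $\na_1^\theta \le \bigl(\na_2 + \sum_{l \ge 3}\na_l\bigr)^\theta$. The key ingredient is the sharp \emph{one-step} estimate
\begin{equation}\label{plan:onestep}
(y + z)^\theta - y^\theta \le (2^\theta - 1)\,z^\theta, \qquad 0 \le z \le y, \quad \theta \in (0,1],
\end{equation}
which I would establish by setting $x := z/y \in [0,1]$ and verifying that $f(x) := (2^\theta - 1)x^\theta - (1+x)^\theta + 1$ is nonnegative on $[0,1]$: the boundary values $f(0) = f(1) = 0$ are direct, and a short analysis of the equation $f'(x) = 0$, i.e.\ $\bigl(\tfrac{1+x}{x}\bigr)^{\theta - 1} = 2^\theta - 1$, shows that $f'$ has a unique zero in $(0,1)$ (the left-hand side is monotone in $x$); combined with $f'(0^+) = +\infty$ and $f'(1) = \theta(2^{\theta-1} - 1) \le 0$, this proves that $f$ attains its minimum at the endpoints.

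The proof then concludes by telescoping. Setting $A_l := \na_2 + \na_3 + \cdots + \na_{l-1}$ for $l \ge 3$, the monotonicity of $\na$ gives $A_l \ge \na_2 \ge \na_l$, so \eqref{plan:onestep} applies with $y = A_l$, $z = \na_l$ at each step. Summing,
$$ \Bigl(\na_2 + \sum_{l \ge 3}\na_l\Bigr)^\theta - \na_2^\theta = \sum_{l \ge 3}\bigl[(A_l + \na_l)^\theta - A_l^\theta\bigr] \le (2^\theta - 1)\sum_{l \ge 3}\na_l^\theta, $$
which, combined with $\na_1^\theta \le (\na_2 + \sum_{l \ge 3}\na_l)^\theta$, yields the claim. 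The main (mild) obstacle is to obtain the sharp constant $(2^\theta - 1)$ in \eqref{plan:onestep}: a naive use of subadditivity $(a + b)^\theta \le a^\theta + b^\theta$ only gives the weaker bound $\sum_l \na_l^\theta \ge 2\na_1^\theta$, without the $(2 - 2^\theta)$ slack on the tail that \eqref{yuan 2bis} provides (and that is presumably needed in the subsequent small-divisor estimates).
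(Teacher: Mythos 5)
Your proof is correct, and its skeleton coincides with the paper's: both use \eqref{eleganza} to bound $\na_1^\theta\le\bigl(\na_2+\sum_{l\ge3}\na_l\bigr)^\theta$, and both thereby reduce \eqref{yuan 2bis} to the concavity-type estimate $\bigl(\na_2+\sum_{l\ge3}\na_l\bigr)^\theta\le\na_2^\theta+(2^\theta-1)\sum_{l\ge3}\na_l^\theta$. The difference lies in how that estimate is settled: the paper disposes of it by citing it ``word by word'' from Lemma A.4 of \cite{stolo-procesi} (the statement going back to \cite{Bourgain:2005} for $\theta=\tfrac12$ and to \cite{Yuan_et_al:2017} in general), whereas you prove it from scratch via the sharp one-step inequality $(y+z)^\theta-y^\theta\le(2^\theta-1)z^\theta$ for $0\le z\le y$ and a telescoping sum over the tail, the admissibility condition $\na_l\le A_l$ being guaranteed by the monotonicity of $\na$ and $A_l\ge\na_2$. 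Your calculus verification of the one-step bound is sound: $f(0)=f(1)=0$, $f'(0^+)=+\infty$, $f'(1)=\theta(2^{\theta-1}-1)\le 0$, and $f'$ vanishes at most once on $(0,1)$ by monotonicity of $\bigl(\tfrac{1+x}{x}\bigr)^{\theta-1}$, so $f$ increases and then decreases, staying nonnegative (for $\theta=1$ it is identically zero, which is harmless). A further small gain of your formulation is that the case $|q|=1$, which the paper treats separately by observing that momentum conservation forces $\na_1=\na_2$, is absorbed automatically: with an empty tail your rearranged inequality reduces to $\na_1^\theta\le\na_2^\theta$, which is exactly \eqref{eleganza} in that case. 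So the approach is essentially the paper's, upgraded to a self-contained argument in place of the external citation.
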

\begin{proof}
	The lemma above was proved in  \cite{Bourgain:2005} for $\theta=\frac12$  and for general {$0<\theta<1$} in \cite{Yuan_et_al:2017}[Lemma 2.1].
	\\
	We start by noticing that if $|q|= 1$ then  $\na$ has cardinality equal to two and \eqref{yuan 2bis} becomes $\na_1+\na_2 \ge 2\na_1$.  Now,  by \eqref{eleganza}, momentum conservation implies that
	$\na_1=\na_2$ and hence \eqref{yuan 2bis}.
	\\
	If  $|q\ge 2$ we write
	\[
	\sum_h\jap{h}^\theta v_h -2\na_1^\theta=\sum_{l\ge 2} \na_l^\theta  - \na_1^\theta \ge  \sum_{l\ge 2} \na_l^\theta  - (\sum_{l\ge 2} \na_l)^\theta\ge  \na_2^\theta +\sum_{l\ge 3} \na_l^\theta -{(\na_2+\sum_{l\ge 3}\na_l)}^\theta
	\]
then the proof follows word by word Lemma A.4 of {\cite{stolo-procesi}}.
\end{proof}
The Lemma proved above, is fundamental in discussing the properties of $\cV_{s,r}$ with $s>0$, indeed it implies
\begin{equation}\label{stima1}
	\sum_h\jap{h}^\theta q_h -\jap{k}^\theta= \sum_h\jap{h}^\theta v_h -2\jap{k}^\theta \ge(2-2^\theta) \pa{\sum_{l\ge 3} \na_l^\theta } 
	\ge  0
\end{equation}
for all $q,k$ satisfying momentum. 

\begin{proof}[Proof of Lemma \ref{monotone}]
	In all that follows we shall use systematically the fact that our vector fields preserve {are momentum preserving},  are zero at the origin so that $|q| \ge 1$.
	\\
	We need to show that setting $s'= s+ \delta$ and $r'<r$
	\begin{equation}\label{gigina}
		\frac{	c^{(k)}_{\rf,s+\delta}(q)}{c^{(k)}_{r,s }(q)} =  \pa{\frac{\rf}{r}}^{|q|-1}e^{-\delta (\sum_h\jap{h}^\theta q_h -\jap{k}^\theta)}\le 1\,,
	\end{equation}
	which follows directly from $|q|\ge 1$ and from   \eqref{stima1} of Lemma \ref{constance generalbis} .
\end{proof}
\subsection{Homological Equation}

  	\begin{lemma}\label{mandrione1}
  		For any $p\in\Z^I_\tf$ if
  		\begin{equation}\label{bigliettino}
  			\sum_{k} p_k\jap{k}^\al e^{\im \varphi_{k}} \geq 2 \sum_k |p_k|\,,
  		\end{equation}
  		then 
  		\begin{equation}\label{birrette}
  			|\lambda\cdot p| \geq 1\,.
  		\end{equation}
  	\end{lemma}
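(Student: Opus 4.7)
The plan is to exploit the decomposition of $\lambda$ guaranteed by Assumption \ref{assumption2}. Namely, writing $\lambda_k = \lambda_k^{(0)} + (\lambda_k - \lambda_k^{(0)})$ with $\lambda_k^{(0)} = \jap{k}^\al e^{\im\varphi_k}$ and $|\lambda_k - \lambda_k^{(0)}| \leq \tfrac{1}{2}$, the scalar product splits as
\begin{equation*}
\lambda\cdot p = \sum_k p_k\jap{k}^\al e^{\im\varphi_k} + \sum_k p_k(\lambda_k - \lambda_k^{(0)}).
\end{equation*}
The first summand is controlled below by the hypothesis \eqref{bigliettino}, and the second one is easily controlled above by the perturbation bound in $\mathtt Q$.

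More concretely, first I would apply the reverse triangle inequality to obtain
\begin{equation*}
|\lambda\cdot p| \;\geq\; \Bigl|\sum_k p_k\jap{k}^\al e^{\im\varphi_k}\Bigr| \;-\; \Bigl|\sum_k p_k(\lambda_k - \lambda_k^{(0)})\Bigr|.
\end{equation*}
Next, bound the second term by $\sum_k |p_k|\,|\lambda_k - \lambda_k^{(0)}| \leq \tfrac{1}{2}\sum_k |p_k|$, using Assumption \ref{assumption2}. Combining with the hypothesis \eqref{bigliettino} gives
\begin{equation*}
|\lambda\cdot p| \;\geq\; 2\sum_k|p_k| - \tfrac{1}{2}\sum_k|p_k| \;=\; \tfrac{3}{2}\sum_k|p_k|.
\end{equation*}
Since $p\in\Z^I_\tf$ and \eqref{bigliettino} forces $p\ne 0$ (otherwise the left hand side vanishes while the hypothesis would demand $0\geq 2\cdot 0$, but then we could not hope for $|\lambda\cdot p|\geq 1$), we have $\sum_k|p_k|\geq 1$, so the conclusion \eqref{birrette} follows immediately.

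The proof is essentially a one-line triangle inequality argument, so there is no real obstacle. The only subtlety worth flagging is the consistency check that the hypothesis indeed rules out $p=0$ in any nontrivial application, and that the finite-support assumption $p\in\Z^I_\tf$ ensures the sum $\lambda\cdot p$ is well defined even though $\lambda\notin\ell^\infty$ in general (cf.\ Remark \ref{rem-wp}).
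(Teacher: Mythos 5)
Your argument is essentially the paper's own proof: the same decomposition $\lambda_k=\lambda_k^{(0)}+(\lambda_k-\lambda_k^{(0)})$, the reverse triangle inequality, the bound $|\lambda_k-\lambda_k^{(0)}|\le\tfrac12$ from \eqref{qalpa}, and the conclusion $|\lambda\cdot p|\ge\tfrac32\sum_k|p_k|\ge\tfrac32$ using that $p$ has integer entries and is nonzero. The paper leaves the degenerate case $p=0$ implicit exactly as you do, so there is nothing to add.
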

  	\begin{proof}
  		By condition \eqref{qalpa} and triangular inequality, the following bounds hold
  		\begin{equation}\label{devoscrivere}
  			|\sum_k p_k\lambda_k| \ge | |\sum_k p_k \jap{k}^\al e^{\im \varphi_{k}}|- |\sum_k p_k (\lambda_k - \jap{k}^\al e^{\im \varphi_{k}})|| \ge \frac32 \sum_k |p_k|\,.
  		\end{equation}
  \end{proof}
  
 \subsubsection*{Proof of Proposition \ref{adjoint action}}
 By Lemma \ref{stantuffo}, it is sufficient to show that for $\lambda$ $(\g,\tau)$-diophantine modulo $\Delta_\lambda$, for all $q,k$ such that, {$|q|\geq 2$,} $\fm\cdot q=\fm_k$  and $\lambda\cdot q-\lambda_k\ne 0$ we have
\begin{equation}
	 \label{scrivomale}
 \frac{	c^{(k)}_{r,s+\delta}(q)}{c^{(k)}_{r,s }(q)}\frac{1}{|\lambda\cdot q-\lambda_k|}= \frac{e^{-\delta (\sum_h\jap{h}^\theta q_h -\jap{k}^\theta)}}{|\lambda\cdot q-\lambda_k|}\le Ce^{\frac{\tc}{\delta^6}}
\end{equation}
 We divide the proof in various cases.
 \\
 {\bf Case 0} If $\na_1=1$ then $q+\be_k$ is supported only on the modes $k=(j,\s)$ with $j=\pm 1,0$. Thus
 \[
 \frac{	c^{(k)}_{r,s+\delta}(q)}{c^{(k)}_{r,s }(q)}\frac{1}{|\lambda\cdot q-\lambda_k|}= \g^{-1}e^{-\delta (|q|-1)}\prod_{h}(1+q_h^2)^\tau=  \g^{-1}e^{-\delta |q|/2}|q|^{12\tau}\,.
 \]
 \\
 {\bf Case 1} If   $q_{k}\ne 0$, then we define $a= q- e_k$, and note that  $a\in \N^I_\tf$ and  \eqref{scrivomale} reads
 \[
\frac{	c^{(k)}_{r,s+\delta}(q)}{c^{(k)}_{r,s }(q)}\frac{1}{|\lambda\cdot q-\lambda_k|}= \g^{-1}e^{-\delta \sum_h\jap{h}^\theta a_h }\prod_h(1+\jap{h^2}a_h^2)^\tau= \g^{-1} e^{\sum_h \tf_h(\theta,a_h)}
 \]
where 
\[\
\tf_h(t,x)=  -\delta \jap{h}^t x + \tau \ln(1+\jap{h}^2 x^2)\,,
\]
then the result follows by {\cite{stolo-procesi}[Lemma A 11]} with $\theta\rightsquigarrow \theta/2$.
\\
 {\bf Case 2} If  $q_k=0$  and $|\lambda\cdot q-\lambda_k||\ge 1/2$, then, using \eqref{stima1},  we have that  \eqref{scrivomale} is bounded by $4$
 \\
 {\bf Case 3} If $q_k=0$,  $|\lambda\cdot q-\lambda_k||< 1/2$, then recalling  \eqref{devoscrivere}
and setting $\lambda^{(0)}_k = \jap{k}^2 e^{\im \varphi_k}$,  we have
 \[
 |\lambda^{(0)}\cdot q-\lambda^{(0)}_k| \le 2(|q|+1)\,,\quad |q|=\sum_k q_k\,.
 \]
 by definition of the $(\na_i)_{i=1}^N$ this means that there is a corresponding sequence $(s_i)_{i=1}^N$ of complex numbers $|s_i|=1$ such that
 \[
| \sum_{i=1}^N s_i \na_i^2|< 2N
 \]
 (recall that $N=|q|+1\ge 3$).  Without loss of generality we may assume that $s_1=1$. Now we have two possibilities:
\\
  {\bf Case 3A}  If $\na_1\ne \na_2$ then 

 \begin{equation}\label{dajeunnome}
 	 \na_1 +\na_2 \le  \na_1^2 -\na_2^2\le  | \na_1^2 +s_2 \na_2^2|\le  2N + \sum_{i=3}^N  \na_i^2 
   \le 7  \sum_{i=3}^N  \na_i^2\,.
 \end{equation}

  \begin{align*}
   \sum_h\jap{h}^{\theta/2} q_h +\jap{k}^{\theta/2} &\le 2\na_1^{\theta/2} + \sum_{i=3}^N  \na_i^{\theta/2}
   \le 2(7  \sum_{i=3}^N  \na_i^2)^{\theta/2} + \sum_{i=3}^N  \na_i^{\theta/2}\\
  & \le  (2 \cdot 7^{\theta/2}+1) \sum_{i=3}^N  \na_i^{\theta}\le\frac{2 \cdot 7^{\theta/2}+1}{2-2^\theta}\sum_h\jap{h}^\theta q_h -\jap{k}^\theta\,.
  \end{align*}
 Thus setting $b= q+e_k$  (and using that $q_h-\delta_{kh} \le b_h$)
 \begin{align*}
 \frac{	c^{(k)}_{r,s+\delta}(q)}{c^{(k)}_{r,s }(q)}\frac{1}{|\lambda\cdot q-\lambda_k|}&\le \g^{-1}e^{-\delta (\sum_h\jap{h}^\theta q_h-\jap{k}^\theta ) }\prod_h(1+\jap{h^2}(q_h-\delta_{kh})^2)^\tau\\
 & \le  \g^{-1}e^{-\delta \tc  \sum_h\jap{h}^{\theta/2} b_h }\prod_h(1+\jap{h^2}b_h^2)^\tau  = \g^{-1} e^{\sum_h \tf_h(\nicefrac\theta 2,b_h)}
 \end{align*}
  the result follows by {\cite{stolo-procesi}[Lemma A 11]}.

   {\bf Case 3B}  If $\na_1=\na_2>\jap{k}$ or $\na_1=\na_2=\na_3=\jap{k}$ then (we may assume that $\na_1>1$ since otherwise we are in case 0)
  \[
  \sum_h\jap{h}^\theta q_h -\jap{k}^\theta\ge \frac13 \sum_h\jap{h}^\theta q_h \,,
  \]
  so that
 \begin{align*}
 	\frac{	c^{(k)}_{r,s+\delta}(q)}{c^{(k)}_{r,s }(q)}\frac{1}{|\lambda\cdot q-\lambda_k|}&\le \g^{-1}e^{-\delta (\sum_h\jap{h}^\theta q_h-\jap{k}^\theta ) }(1+\jap{k}^2)\prod_{h\ne k}(1+\jap{h^2}q_h^2)^\tau\\
 	& \le  \g^{-1}e^{-\delta/2 \sum_h\jap{h}^\theta q_h }\prod_h(1+\jap{h^2}q_h^2)^{2\tau}  
 \end{align*}
 then the result follows from  {\cite{stolo-procesi}[Lemma A 11]} with $\theta \rightsquigarrow \theta/2, \delta/2\rightsquigarrow \delta$ and $2\tau\rightsquigarrow \tau$.
 
 {\bf Case 3C}   If $q_k=0$,  $|\lambda\cdot q-\lambda_k| < 1/2$, $\na_1=\na_2=\jap{k}$ and $\na_3<\na_1$ then there exists one and only one $k_1$ such that $k_1\ne k$,  $\jap{k_1}=\jap{k}$ and for which $q_{k_1}=1$ (all other $h$ such that  $\jap{h}=\jap{k}$ must have $q_{h}=0$).
 Thus the right most inequality in formula \eqref{dajeunnome} reads
 \[
 |\lambda_{k_1}^{(0)}- \lambda_k^{(0)}|= \na_1^2|e^{\im \varphi_{k_1}}- e^{\im \varphi_k}|  \le 2N- \sum_{i=3}^N \na_i^2
 \]
  while, setting $k=(j,\s)$ and $k_1=(j_1,\s_1)$, the momentum conservation reads
  \[
  |\s_1j_1-\s j | \le \sum_{i=3}^N \na_i
  \]
  If $\s_1j_1\ne \s j$ then $\na_1 \le  \sum_{i=3}^N \na_i$, so that
   \begin{align*}
  	\sum_h\jap{h}^{\theta} q_h +\jap{k}^{\theta} &\le 2\na_1^{\theta} + \sum_{i=3}^N  \na_i^{\theta}
  	\le 2(  \sum_{i=3}^N  \na_i)^{\theta} + \sum_{i=3}^N  \na_i^{\theta}\\
  	& \le  3\sum_{i=3}^N  \na_i^{\theta}\le\frac{3}{2-2^\theta}\sum_h\jap{h}^\theta q_h -\jap{k}^\theta\,\,,
  \end{align*}
  then one proceeds as in Case 3A.
  \\
  If $\s_1j_1= \s j$ then, since $k_1\ne k$, one must have $\s_1=-\s$ and $j_1=-j$. Thus, by Assumption \ref{assumption2} either $\lambda_{k_1}=\lambda_k$ or $|e^{\im \varphi_h}- e^{\im \varphi_k}|\ge C$. If $\lambda_{k_1}=\lambda_k$ then
  \[
  \lambda\cdot(q-\be_k)= \lambda \cdot (q- \be_{k_1})\,,\quad  \sum_{h}\jap{h}^\theta q_{h}-\jap{k}^\theta= \sum_h\jap{h}^\theta q_h-\jap{k_1}^\theta
  \] since now $q_{k_1}\ne 0$ we fall in Case 1.
  
  On the other hand if $|e^{\im \varphi_h}- e^{\im \varphi_k}|\ge C$ then
  \[
  \na_1 \le \sqrt{\frac{7}{C}} \sum_{i=3}^N \na_i\,\,
  \]
  and again we proceed as in Case 3A.

  \vspace{0.5cm}
  \gr{Acknowledgements.} 
  J.E. Massetti and M. Procesi have been supported by 
  the  research project 
  PRIN 2022FPZEES ``Stability in Hamiltonian dynamics and beyond"
  of the 
  Italian Ministry of Education and Research (MIUR).
J.E.M. acknowledges also
the support of the Department of Excellence grant MatMod@TOV
(2023-27), awarded to the Department of Mathematics at University of Rome Tor Vergata, the support of the project ``Stable and unstable phenomena in propagation of Waves in dispersive media" of  INdAM-GNAMPA and the moral one of E. Antonelli, L. Baroni, and R. Feola.

  \vspace{0.5cm}
  
  \gr{Declarations}. Data sharing is not applicable to this article as no datasets were generated or analyzed during the current study.
  
  \noindent
  Conflicts of interest: The authors have no conflict of interests to declare.
\bibliographystyle{alpha}
\bibliography{biblioAlmostSob}
\end{document}